\numberwithin{equation}{section}
\newtheorem{theorem}{Theorem}[section]
\newtheorem{definition}[theorem]{Definition}
\newtheorem{lemma}[theorem]{Lemma}
\newtheorem{proposition}[theorem]{Proposition}
\newtheoremstyle{mytheorem}
{}% measure of space to leave above the theorem. E.g.: 3pt
{}% measure of space to leave below the theorem. E.g.: 3pt
{\it}% name of font to use in the body of the theorem
{}% measure of space to indent
{\bf}% name of head font
{.}% punctuation between head and body
{ }% space after theorem head; " " = normal interword space
{\thmnumber{#2.~}\thmname{#1}\thmnote{~\rm#3}}% Manually specify head
\newtheoremstyle{myremark}
{}% measure of space to leave above the theorem. E.g.: 3pt
{}% measure of space to leave below the theorem. E.g.: 3ptontinuo dentro e fuori dall'infermeria, poi Barzagli e i due mesi di stop per l'infortunio alla spalla, infine vedi tu che la Salernitana alla fine si salva.
{}% name of font to use in the body of the theorem
{}% measure of space to indent
{\it}% name of head font
{.}% punctuation between head and body
{ }% space after theorem head; " " = normal interword space
{\thmnumber{#2.~}\thmname{#1}\thmnote{~\rm#3}}% Manually specify head
\newtheoremstyle{myparagraph}
{}% measure of space to leave above the theorem. E.g.: 3pt
{}% measure of space to leave below the theorem. E.g.: 3pt
{\rm}% name of font to use in the body of the theorem
{\parindent}% measure of space to indent
{\bf}% name of head font
{.}% punctuation between head and body
{ }% space after theorem head; " " = normal interword space
{\thmnumber{#2.~}\thmname{#1}\thmnote{#3}}% Manually specify head
\theoremstyle{myremark}
\newtheorem{remark}[subsection]{Remark}
\theoremstyle{myparagraph}
\newtheorem*{parag*}{}
\def\@secnumfont{\sc}
\def\section{\@startsection{section}{1}%
\z@{1.5\linespacing\@plus .2\linespacing}{.7\linespacing}%
{\normalfont\sc\centering}}
\def\ps@headings{\ps@empty
 \def\@evenhead{%
  \setTrue{runhead}%
  \normalfont\footnotesize
  \rlap{\thepage}\hfil
  \def\thanks{\protect\thanks@warning}%
  \leftmark{}{}\hfil}%
 \def\@oddhead{%
  \setTrue{runhead}%
  \normalfont\footnotesize\hfil
  \def\thanks{\protect\thanks@warning}%
  \rightmark{}{}\hfil \llap{\thepage}}%
\let\@mkboth\markboth}
\renewenvironment{proof}[1][\proofname]{\par
  \pushQED{\qed}%
  \normalfont \topsep6\p@\@plus6\p@\relax
  \trivlist
  \itemindent\normalparindent
  \item[\hskip\labelsep
    \bfseries
    #1\@addpunct{.}]\ignorespaces
}{%
  \popQED\endtrivlist\@endpefalse
}
\providecommand{\proofname}{Proof}
\newcommand{\Flat}{\mathbb{F}}
\newcommand{\Mass}{\mathbb{M}}
\newcommand{\TP}{\textbf{TP}}
\newcommand{\OTP}{\textbf{OTP}}
\newcommand{\BR}{\textbf{BR}}
\newcommand{\Nor}{\mathscr{N}}
\newcommand{\Rect}{\mathscr{R}}\newcommand{\MM}{\mathbb{M}^\alpha}
\newcommand{\R}{\mathbb{R}}
\newcommand{\I}{\mathscr{I}}
\newcommand{\N}{\mathbb{N}}
\newcommand{\Z}{\mathbb{Z}}
\newcommand{\Po}{\mathscr{P}}
\newcommand{\D}{\mathscr{D}}
\newcommand{\Haus}{\mathscr{H}}
\newcommand{\M}{\mathscr{M}}
\newcommand{\eps}{\varepsilon}
\newcommand{\Lip}{\mathrm{Lip}}
\newcommand{\dist}{\mathrm{dist}}
\newcommand{\supp}{\mathrm{supp}}
\newcommand{\dV}{d_V\kern-1pt}
\newcommand{\trait}[3]{\vrule width #1ex height #2ex depth #3ex}
\newcommand{\trace}{\mathchoice%
  {\mathbin{\trait{.12}{1.2}{.03}\trait{.8}{0.09}{0.03}}}
  {\mathbin{\trait{.12}{1.2}{.03}\trait{.8}{0.09}{0.03}}}
  {\mathbin{\hskip.15ex\trait{.09}{.84}{0.02}\trait{.56}{.07}{.02}}\hskip.15ex}
  {\mathbin{\trait{.07}{.6}{.01}\trait{.4}{.06}{.01}}}}
\newenvironment{itemizeb}
{\begin{itemize}\itemsep=2pt}{\end{itemize}}
\begin{document}

	% The following instructions defines the headings
	% for all pages except the first one
	% They can be safely removed
	%
\pagestyle{empty}
\pagestyle{myheadings}
\markboth%
{\underline{\centerline{\hfill\footnotesize%
\textsc{Gianmarco Caldini, Andrea Marchese,  \and Simone Steinbr\"uchel}%
\vphantom{,}\hfill}}}%
{\underline{\centerline{\hfill\footnotesize%
\textsc{Generic uniqueness of optimal transportation networks}%
\vphantom{,}\hfill}}}

	% We do not use the maketitle command, and
	% in the next lines we define the headline for the
	% first page, then title, authors' names, and so on...
	% acknowledgements are at the end of the introduction
	% affiliations are at the end of the paper
	%
\thispagestyle{empty}

~\vskip -1.1 cm

	% heading of first page
	%
%{\footnotesize\noindent
%[version:~\dataversione]%
%\hfill
%to apper on\dots
%\hfill DOI~\href{http://dx.doi.org/***}{***} \par
%}

\vspace{1.7 cm}

	% title
	%
{\large\bf\centering
Generic uniqueness of optimal transportation networks\\
}

\vspace{.6 cm}

	% authors' names
	%
\centerline{\sc Gianmarco Caldini, Andrea Marchese,  \and Simone Steinbr\"uchel}

\vspace{.8 cm}

{\rightskip 1 cm
\leftskip 1 cm
\parindent 0 pt
\footnotesize

	% abstract, keywords and MSC numbers
	%
{\sc Abstract.}
We prove that for the generic boundary, in the sense of Baire categories, there exists a unique minimizer of the associated optimal branched transportation problem.

\par
\medskip\noindent
{\sc Keywords: } optimal branched transportation, generic uniqueness, normal currents.

\par
\medskip\noindent
{\sc MSC :} 49Q20, 49Q10.
\par
}

%\tableofcontents

%\begin{definition}[(Admissible functionals)]\label{def_adm}
%Let $m,n\in\N$ with $m<n$, and let $K\subset\R^n$ be a convex compact set, let $F:{\bf{N}}_{m+1}(K)\to\R\cup\{+\infty\}$ be a functional on the space of normal $(m+1)$-dimensional currents on $\R^n$, supported on $K$. We say that $F$ is an \emph{admissible functional} if it satisfies the following properties:
%\begin{itemize}
%\item[(i)] $F$ is lower semicontinuous on ${\bf{N}}_{m+1}(K)$ with respect to the convergence in the norm $\mathbb{F}_K$;
%\item[(ii)] for every $C\in\R$, the set 
%$$\{T\in {\bf{N}}_{m+1}(K):F(T)\leq C\;{\rm and}\;\Mass(\partial T)\leq C\}$$ 
%is $\mathbb{F}_K$-compact.
%\item[(iii)] the minimizers of $F$ are \emph{stable}, namely the following property holds: assume that $T_j\in{\rm argmin}\{F(T):\partial T=S_j\}$ are supported on $K$, there exists $C>0$ such that $F(T_j)\leq C$ and $\Mass(S_j)\leq C$ for every $j$, and there exists $T_\infty$ such that $\mathbb{F}_K(T_j-T_\infty)\to 0$, then $T_\infty\in{\rm argmin}\{F(T):\partial T=S\}$.Introductionz
%\end{definition}
{
  \hypersetup{linkcolor=black}
  \tableofcontents
}
\section{Introduction}
Let $K\subset\R^d$ be a convex compact set. Given two Radon measures $\mu_-$ (source) and $\mu_+$ (target) on $K$ with the same mass, i.e. $\Mass(\mu_-)=\Mass(\mu_+)$, a \emph{transport path} transporting $\mu_-$ onto $\mu_+$ is a rectifiable 1-current $T$ on $K$ whose boundary is the 0-current $\partial T=\mu_+-\mu_-$. For the precise definition of these objects we refer to Section \ref{s:preliminaries}. The current $T$ can be identified with a vector valued measure on $K$ which we denoted by $T$ as well. It can be written as $T=\vec T\theta\Haus^1\trace E$, where $E$ is a 1-rectifiable set, $\theta\in L^1(\Haus^1\trace E)$ and $\vec T$ is a unit vector field spanning the tangent ${\rm{Tan}}(E,x)$ at $\Haus^1$-a.e. $x\in E$. The constraint $\partial T=\mu_+-\mu_-$ is equivalent to the condition that the vector valued measure $T$ has distributional divergence which is a signed Radon measure and satisfies div$(T)=\mu_--\mu_+$. Given $\alpha\in(0,1)$, the \emph{ $\alpha$-mass} of a transport path $T$ as above is defined as 
$$\MM(T)=\int_E|\theta|^\alpha d\Haus^1.$$
We denote by $\D_k(K)$ the set of $k$-dimensional currents with support in $K$, and letting $b:=\mu_+-\mu_-\in \D_0(K)$, we denote by $\OTP(b)$ the set of minimizers of the \emph{optimal branched transportation problem} with boundary $b$, namely the minimizers of the $\alpha$-mass among rectifiable 1-currents $T$ with boundary $\partial T=b$.\\ 

For $\alpha\leq 1-1/d$, there are boundaries $b$ such that $\OTP(b)$ degenerates to the set of all currents $T$ with boundary $\partial T=b$, since there is no 1-current $T$ with $\partial T=b$ and $\MM(T)<\infty$, see \cite{morsant}. In turn, it is well known that there are boundaries $b$ such that $\OTP(b)$ contains more than one element of finite $\alpha$-mass; for instance one can exhibit a non-symmetric minimizer $T$ for which $\partial T$ is symmetric, so that the network $T'$ symmetric to $T$ is a different minimizer (see Figure \ref{f:1}).

\begin{figure}[ht]
	\centering 
    \includegraphics[width=0.45\textwidth]{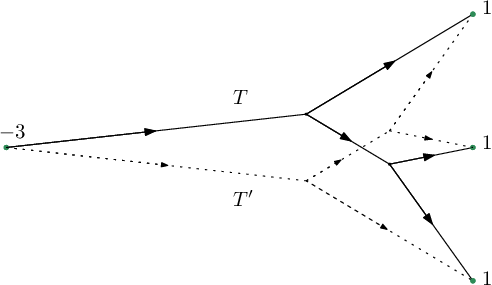}
    \caption{The boundary $\partial T$ is symmetric (with respect to the horizontal axis) and $T\in\OTP(\partial T)$ is not symmetric, hence the symmetric copy $T'$ is a different minimizer.}\label{f:1}
\end{figure}
The aim of this paper is to prove that for the generic boundary, in the sense of Baire categories, the associated optimal branched transportation problem has a unique minimizer.
To this purpose, we denote the set of boundaries by
\[ {\mathscr{B}}_{0}(K) := \{b\in{\mathscr{D}}_{0}(K): \text{there is an } S \in {\mathscr{D}}_{1}(K) \text{ with } \partial S= b\}\,, \]
we fix an arbitrary constant $C>0$ and we define 
\begin{equation}\label{def_AC}
    A_C:=\{b\in{\mathscr{B}}_{0}(K): \mathbb{M}(b)\leq C\;{\rm{and}}\;\MM(T)\leq C\mbox{ for every }T\in\OTP(b)\}.
\end{equation}
We metrize $A_C$ with the \emph{flat norm} $\Flat_K$, see \eqref{e:flat def}, and we observe that the set $A_C$ endowed with the induced distance is a non-trivial complete metric space, see Lemma \ref{l:closed}.
Our main result is the following

\begin{theorem}\label{t:main}
    The set of boundaries $b\in A_C$, for which $\OTP(b)$ is a singleton, is residual.
\end{theorem}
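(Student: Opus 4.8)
The plan is to exhibit the uniqueness set as a countable intersection of open dense sets in the complete metric space $(A_C,\Flat_K)$. A natural parametrization of non-uniqueness by a real number is to fix a countable family of continuous functionals separating distinct currents and, for each $n\in\N$, consider the set
\[
U_n:=\Big\{b\in A_C : \mathrm{diam}_n\big(\OTP(b)\big)<\tfrac1n\Big\},
\]
where $\mathrm{diam}_n$ measures the spread of the (compact, by the $\alpha$-mass bound and standard compactness/lower semicontinuity for currents) minimizer set $\OTP(b)$ with respect to a fixed metric inducing the flat topology on the $C$-ball of currents. Then $\bigcap_n U_n$ is exactly the set of boundaries with a single minimizer, so it suffices to show each $U_n$ is open and dense.

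Openness of $U_n$ should follow from an upper-semicontinuity (Kuratowski) property of the multifunction $b\mapsto\OTP(b)$: if $b_k\to b$ in $\Flat_K$ with $b_k\in A_C$, and $T_k\in\OTP(b_k)$, then a subsequence converges in the flat norm to some $T$ with $\partial T=b$ and, by lower semicontinuity of $\MM$ together with an approximation argument producing near-optimal competitors for $b_k$ from any competitor for $b$, one gets $T\in\OTP(b)$. Hence $\OTP(b_k)$ cannot be much larger than $\OTP(b)$ for $k$ large, giving $b\in U_n\Rightarrow$ a whole $\Flat_K$-neighbourhood lies in $U_n$. The density of $U_n$ is where the real work lies: given $b\in A_C$ and $\eps>0$, I must produce $b'\in A_C$ with $\Flat_K(b,b')<\eps$ whose optimal network is (essentially) unique. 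The idea is to pick any $T\in\OTP(b)$ and a slight perturbation of it — for instance reparametrizing or "pinching" $T$ by a generic small bi-Lipschitz deformation $\phi$ of $\R^d$, or adding a tiny generic current — so that $b':=\partial(\phi_\#T)$ is close to $b$ and $\phi_\#T$ is the strictly unique minimizer for $b'$; strict uniqueness can be forced by making the cost strictly convex along the perturbation direction, exploiting that a generic small perturbation breaks the symmetries/degeneracies responsible for multiplicity (as in Figure \ref{f:1}). One must check $b'\in A_C$, i.e. the mass bound $\Mass(b')\le C$ and the uniform bound $\MM\le C$ on $\OTP(b')$ survive the perturbation, which is why the deformation is taken small and why $A_C$ (rather than all of $\mathscr{B}_0(K)$) is the right ambient space — Lemma \ref{l:closed} guarantees completeness so Baire applies.

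The main obstacle is the density step, and specifically constructing a perturbed boundary whose minimizer is genuinely unique rather than merely "isolated": one needs a mechanism that rules out all other minimizers at once, not just nearby ones. I expect this to be handled by a structural/rigidity argument for $\alpha$-mass minimizers — e.g. that two distinct minimizers of the same boundary can be interpolated or combined to violate strict subadditivity of $t\mapsto t^\alpha$ unless they share the same "irrigation pattern", and that this shared pattern is itself destroyed by a generic perturbation of $b$. A cleaner route, if available from the preliminaries, is a general principle: the set of $b$ for which a convex-type minimization problem has a unique solution is residual whenever the problem is "calm" (the multifunction is u.s.c. with compact values) and the space of boundaries admits, near every point, arbitrarily small perturbations rendering the solution unique; then the proof reduces to verifying these two hypotheses, the second again being the crux and handled by the explicit deformation construction sketched above.
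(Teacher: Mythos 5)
Your reduction of the theorem to ``openness $+$ density'' of the uniqueness set is sound and coincides with the paper's own soft step: writing the bad set as a countable union of sets of boundaries admitting two minimizers at flat distance $\geq 1/m$, and using upper semicontinuity of $b\mapsto\OTP(b)$ to show these sets are closed, is exactly Lemma \ref{l:residual}. Be aware, though, that the upper semicontinuity you invoke is not a routine ``approximation argument'': producing near-optimal competitors for $b_k$ from competitors for $b$ is precisely the stability theorem for branched transport (Theorem \ref{t:stability_new}, a slight strengthening of the main result of \cite{CDRMcpam}), which is a deep external input, not something that follows from lower semicontinuity of $\MM$ alone.

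The genuine gap is the density step, which is the entire mathematical content of the paper, and your sketch does not supply it. The two mechanisms you propose do not work as stated. First, pushing forward a minimizer $T$ by a generic small bi-Lipschitz map $\phi$ gives a current $\phi_\# T$ whose boundary is close to $b$, but there is no reason $\phi_\# T$ should minimize the $\alpha$-mass for its own boundary, let alone be the \emph{unique} minimizer: minimality of $\MM$ is not preserved under diffeomorphisms, and non-uniqueness is not in general caused by a symmetry that a generic deformation could ``break'' (Figure \ref{f:1} is only an illustrative example; distinct minimizers may have different topologies with no symmetry present). There is likewise no strict convexity of the problem in the boundary datum that would let you conclude uniqueness ``along the perturbation direction''. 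Second, ``adding a tiny generic current'' runs into the constraint you only mention in passing: the perturbed boundary must stay in $A_C$, i.e.\ satisfy both $\Mass(b')\leq C$ and $\MM(S)\leq C$ for \emph{all} of its minimizers, and adding mass can violate this; the paper flags exactly this as the main obstruction (see the Remark after Theorem \ref{t:main}) and resolves it by first pushing $b$ strictly inside, to $A_{C-\delta}$, via the polyhedral/quantized approximation of Lemma \ref{l:integral_bdry}. What replaces your sketch in the paper is a long chain of structure results with no analogue in your proposal: minimizers of integral boundaries are integer polyhedral (Lemma \ref{l:integral_minimizers}), $\OTP(b)$ is finite (Lemma \ref{l:finiteminimizers}), one can select ``identifying'' points $p_1,\dots,p_h$ on a chosen minimizer (Lemma \ref{l:magic_points}), and the explicit perturbation \eqref{e:bienne} (subtracting a $1/k$ fraction of $T$ near these points, not adding anything) is shown to have a unique minimizer through the case-by-case exclusion of local topologies and a global energy comparison (Proposition \ref{p:unique_bienne}, Lemma \ref{l:main}). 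Without an argument of this kind, your density claim — and hence the theorem — remains unproved.
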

\subsection{Previous results on the well-posedness of the problem}
The variational formulations of the optimal branched transportation problem were inspired by the discrete model introduced by Gilbert in \cite{Gilbert} and are used to model supply and demand transportation systems which naturally show ramifications as a result of a transportation cost which favors large flows and penalizes diffusion. 

In our paper, we adopt the  \emph{Eulerian formulation} proposed by Xia in \cite{Xia}. Due to a celebrated result by Smirnov, see \cite{Smirnov93}, this is equivalent to the \emph{Lagrangian formulation}, introduced by Maddalena, Morel and Solimini in \cite{MSM}, see \cite{BCM,Pegon}. Existence results and some regularity properties of minimizers have been established for instance in
\cite{BeCaMo, BraSol, morsant, xia2, xiaBoundary}. Recently, another helpful well-posedness property of the problem was established in \cite{CDRMcpam}: the stability of minimizers with respect to variations of the boundary, see \cite{CDRMPP} for the Lagrangian counterpart. Slightly improving upon the main result of \cite{CDRMcpam}, see Theorem \ref{t:stability_new}, we advance on the study of the well-posedness properties of the branched transportation problem, as we establish the first result on the generic uniqueness of minimizers, in full generality, namely in every dimension $d$ and for every exponent $\alpha\in(0,1)$.

Prior to our work, we are aware of only one elementary result on the uniqueness of minimizing networks. It appeared in the original paper by Gilbert \cite{Gilbert}, and says that there exists at most one discrete \emph{minimum cost communication network} with a given Steiner topology.  

Several variants and generalizations of the branched transportation problem were proposed and studied by many authors in recent years, see for instance \cite{BF, BrBuSa, BW, BW1, brabutsan, BrGaReSun, BrPaSun, BrSun, CDRMjmpa, MMST, MMT, PaoliniStepanov, XiaXu}. For the sake of simplicity, we prove the generic uniqueness of minimizers only for the Eulerian formulation introduced in \cite{Xia}.

\subsection{Strategy of the proof}
Using a small modification of the stability property proved in \cite{CDRMcpam}, see Theorem \ref{t:stability_new}, we show that in order to prove Theorem \ref{t:main}, it suffices to prove the \emph{density} of the set of boundaries $b\in A_C$ for which $\OTP(b)$ is a singleton, see Lemma \ref{l:residual}. A similar reduction principle is used in \cite{MorganInv, Morgan} to prove that the generic (higher dimensional) boundary spans a unique minimal hypersurface. 

The proof of the density result is based on the following perturbation argument. Firstly, we prove that we can reduce to a finite atomic boundary $b$ with integer multiplicities, exploiting the fact that multiples of such boundaries are dense in $A_C$, see Lemma \ref{l:integral_bdry}. For these boundaries, we prove that the solutions to the optimal branched transportation problem are multiples of polyhedral integral currents, see Lemma \ref{l:integral_minimizers}. Then we improve the uniqueness result of \cite{Gilbert} to suit the discrete branched transportation problem, obtaining as a byproduct that for every finite atomic boundary $b$ as above the set $\OTP(b)$ is finite, see Lemma \ref{l:finiteminimizers}. We deduce the existence of a set of points $\{p_1,\dots,p_h\}$ in the regular part of the support of a fixed transport path $T\in\OTP(b)$ with the property that $T$ is the only element in $\OTP(b)$ whose support contains $\{p_1,\dots,p_h\}$, see Lemma \ref{l:magic_points}. 

Next, we aim to ``perturb" the boundary $b$ close to the points $p_1,\dots,p_h$ in order to obtain boundaries with unique minimizers, keeping in mind the fact that the perturbed boundaries should not escape from the set $A_C$. More in detail, we define a sequence $(b_n)_{n\geq 1}\subset A_C$ of boundaries for the optimal branched transportation problem with the property that $\Flat_K(b_n-b)\to 0$ as $n\to\infty$. Moreover, each $b_n$ has points of its support (with small multiplicity) in proximity of $p_1,\dots,p_h$, so that every minimizing transport path $S_n$ with boundary $\partial S_n=b_n$ is forced to have such close-by points in its support. Exploiting again the stability property of Theorem \ref{t:stability_new}, we deduce that for every choice of $S_n\in\OTP(b_n)$ there exists $S\in\OTP(b)$ such that, up to subsequences, it holds $\Flat_K(S_n-S)\to 0$ and we can infer the Hausdorff convergence of the supports of the $S_n$'s to the union of the support of $S$ and the points $p_1,\dots,p_h$, see Lemma \ref{l:conv_hauss}. Notice that at this stage we cannot deduce from Lemma \ref{l:magic_points} that $S=T$, since the portion of $S_n$ which is in proximity of some of the $p_i$'s might vanish in the limit. In order to exclude this possibility, we perform a fine analysis of the structure of the network $S_n$ around the points $p_1,\dots, p_h$, see \S \ref{Step2}: this allows us to exclude all possible local topologies except for two, see \eqref{e:THE_ONE}, proving that $p_1,\dots,p_h$ are contained in the support of $S$ (so that in particular $S=T$ by Lemma \ref{l:magic_points}) and that $\OTP(b_n)=\{S_n\}$, for $n$ sufficiently large, see Lemma \ref{l:main}, which concludes the proof of Theorem \ref{t:main}.\\ 

\begin{remark}
It is much easier to prove just density in $\bigcup_{C>0}A_C$ of the boundaries $b$ for which $\OTP(b)$ is a singleton. Indeed, it is significantly simpler to perform the strategy outlined above if one is allowed to choose $b_n$ simply satisfying $\Flat_K(b_n-b)\to 0$ and $\MM(S_n)\leq C$, but possibly with $\Mass(b_n)>C$: for instance it suffices to choose the perturbation $b_n$ as in \eqref{e:bienne} with $k=1$, in which case it is easy to prove that $\OTP(b_n)$ is a singleton. Obviously such type of perturbation is not admissible in order to prove the residuality result of Theorem \ref{t:main}, since such boundaries $b_n$ do not belong to $A_C$. One of the challenges in our proof is therefore to find suitable perturbations $b_n$ of $b$ which are internal to the set $A_C$ and such that for the boundary $b_n$ there exists a unique minimizer of the optimal branched transportation problem, for $n$ sufficiently large.
\end{remark}

\begin{remark}
Following \cite{MorganInv, Morgan}, it would be tempting to adopt a seemingly simpler strategy to prove Theorem \ref{t:main}. Indeed the density result would be an easy consequence of the following \emph{unique continuation principle}: if $b$ is a finite atomic boundary with integer multiplicities, then any two elements of $\OTP(b)$ which coincide on a neighbourhood of the support of $b$ necessarily coincide globally.\\
One reason to believe that such a statement could be true is the fact that, knowing the directions and the multiplicities of all the edges colliding at a branch point except for one, it is possible to deduce the information on the last edge, by exploiting a balancing condition which is due to the stationarity of the network for the $\alpha$-mass. The main obstruction to prove the statement is the following. If for a minimizer $T$ in $\OTP(b)$ two or more edges emanating from the boundary collide at some branch point, it is not obvious that for another minimizer $T'$ which coincides with $T$ on a neighbourhood of the support of $b$ the same edges still collide: it might happen that $T'$ has some branch point in the interior of one of these edges. We do not exclude that the statement could be true, but we believe that this cannot be proved only by \emph{local} properties, which would make a potential proof quite involved. This is the reason why we opted for a completely different strategy, which is based ultimately on local arguments only.\\
The presence of singularities is not an issue in the framework of minimal surfaces, because the singular set is too small to disconnect the regular part of the surface. We believe that the strategy which we devised is of general interest and can be adapted to prove generic uniqueness of solutions to other variational problems with \emph{large} singular sets, see \cite{DLHMS, DLHMS2}.\\
\end{remark}

\section{Preliminaries}\label{s:preliminaries}
Through the paper $K\subset$ $\R^d$ denotes a convex compact set. We denote by $\M(K)$ the space of signed Radon measures on $K$ and by $\M_+(K)$ the subspace of positive measures. The \emph{total variation measure} associated to a measure $\mu\in\M(K)$ is denoted by $\|\mu\|$ and $\mu_+:=1/2(\|\mu\|+\mu)$ and $\mu_-:=1/2(\|\mu\|-\mu)$ denote respectively the positive and the negative part of $\mu$. The \emph{mass} of $\mu$ is the quantity $\Mass(\mu):=\|\mu\|(K)$. We say that a measure is \emph{finite atomic} if its support is a finite set.

We adopt Xia's \emph{Eulerian formulation} \cite{Xia} of the optimal branched transportation problem. This employs the theory of currents, for which we refer the reader to \cite{FedererBOOK}. We recall that a $k$-dimensional \emph{current} on $\R^d$ is a continuous linear functional on the space $\D^k(\R^d)$ of smooth and compactly supported differential $k$-forms and we denote by $\D_k(K)$ the space of $k$-dimensional currents with support in $K$. 
The space $\D_k(K)$ is endowed with a norm which is called \emph{mass} and denoted by $\Mass$. By the Riesz representation theorem, a current $T$ with $\Mass(T)<\infty$ can be identified  with vector-valued Radon measures $\vec T\mu_T$ where $\vec T$ is a unit $k$-vector field and $\mu_T$ a positive Radon measure. The mass of the current $T$ coincides with the mass of the measure $\mu_T$. We denote by $\supp(T)$ the \emph{support} of a current $T$, which coincides with the support of the measure $\mu_T$, if $T$ has finite mass. 
The \emph{boundary} of a current $T\in\D_k(K)$ is the current $\partial T\in\D_{k-1}(K)$ such that $$\partial T(\phi)=T(d\phi), \quad\mbox{ for every $\phi\in\D^{k-1}(\R^d)$}.$$
A current $T$ such that $\Mass(T)+\Mass(\partial T)<\infty$ is called a \emph{normal} current. The space of $k$-dimensional normal currents with support in $K$ is denoted by $\Nor_k(K)$.

We say that a current $T\in\D_k(K)$ is \emph{rectifiable} and we write $T\in\Rect_k(K)$ if we can identify $T$ with a triple $(E,\tau,\theta)$, where $E\subset K$ is a $k$-rectifiable set, $\tau(x)$ is a unit $k$-vector spanning the tangent space Tan$(E,x)$ at $\Haus^k$-a.e. $x$ and $\theta\in L^1(\Haus^k\trace E)$, where the identification means that
$$T(\omega)=\int_E\langle\omega(x),\tau(x)\rangle \theta(x) d\Haus^k(x), \quad \mbox{ for every $\omega\in \D^k(\R^d)$}.$$

Those currents $T=(E,\tau,\theta)$ which are normal and rectifiable with integer multiplicity $\theta$ are called \emph{integral} currents. The subgroup of integral currents with support in $K$ is denoted by $\I_k(K)$.

A $k$-dimensional \emph{polyhedral} current is a current $P$ of the form  
\begin{equation}\label{e:poly}
P:=\sum_{i=1}^N\theta_i\llbracket \sigma_i\rrbracket,
\end{equation}
where $\theta_i\in\R\setminus\{0\}$, $\sigma_i$ are nontrivial $k$-dimensional simplexes in $\R^d$, with disjoint relative interiors, oriented by $k$-vectors $\tau_i$ and $\llbracket \sigma_i \rrbracket=(\sigma_i,\tau_i,1)$ is the multiplicity-one rectifiable current naturally associated to $\sigma_i$. 
The subgroup of polyhedral currents with support in $K$ is denoted $\Po_k(K)$. A polyhedral current with integer coefficients $\theta_i$ is called \emph{integer polyhedral}.\\

Given $\alpha\in(0,1)$ and a 1-current $T\in\Nor_1(K)\cup\Rect_1(K)$ we define the $\alpha$-mass
\begin{equation*}
    \MM(T):=
    \begin{cases}
      \int_E|\theta|^\alpha d\Haus^1, & \text{if}\ T=(E,\tau,\theta)\in\Rect_1(K); \\
      +\infty, & \text{otherwise.}
    \end{cases}
  \end{equation*}
If $\mu_-$ and $\mu_+$ are elements of $\M_+(K)$ such that $\Mass(\mu_-)=\Mass(\mu_+)$, the \emph{optimal branched transportation problem} with boundary $b=\mu_+-\mu_-$ seeks a normal current $T\in\Nor_1(K)$ which minimizes the $\alpha$-mass $\Mass^\alpha$ among all currents $S$ with boundary $\partial S=b$. Hence, we denote by $\TP(b)$ the set of \textit{transport paths} with boundary $b$ as $$\TP(b):=\{T\in\Nor_1(K) : \partial T=b\},$$ and the \textit{least transport energy} associated to $b$ as $$\mathbb{E}^{\alpha}(b):= \inf\{\MM(T): T \in \TP(b)\}.$$ We define the set of \textit{optimal transport paths} with boundary $b$ by $$\OTP(b):= \{T\in \TP(b) :\MM(T)=\mathbb{E}^{\alpha}(b)\}.$$ 

Let $A_C$ be the set of boundaries defined in \eqref{def_AC}. Due to the Baire category theorem, the next lemma ensures that a \emph{residual} subset of $A_C$ (namely a set which contains a countable intersection of open dense subsets) is dense. We recall that the flat norm $\Flat_K(T)$ of a current $T\in\D_k(K)$ is the following quantity, see \cite[\S 4.1.12]{FedererBOOK},
\begin{equation}\label{e:flat def}
   \Flat_K(T):=\inf\{\Mass(T-\partial S)+\Mass(S):S\in\D_{k+1}(K)\}.
\end{equation}

\begin{lemma}\label{l:closed}
The set $A_C$ is $\mathbb{F}_K$-closed. In particular $(A_C,\mathbb{F}_K)$ is a complete metric space.
\end{lemma}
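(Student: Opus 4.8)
The plan is to show that $A_C$ is closed under flat convergence by combining lower semicontinuity of mass and $\alpha$-mass with the stability of minimizers. Suppose $(b_n)\subset A_C$ and $\Flat_K(b_n - b)\to 0$ for some $b\in\D_0(K)$; I must show $b\in A_C$. First I would verify $b\in\mathscr{B}_0(K)$ and $\Mass(b)\le C$. The mass bound is immediate from lower semicontinuity of mass with respect to flat convergence, since $\Mass(b_n)\le C$ for all $n$. To see $b\in\mathscr{B}_0(K)$, pick for each $n$ a transport path $T_n\in\OTP(b_n)$ (nonempty by existence of minimizers, e.g.\ \cite{xia2}); by definition of $A_C$ we have $\MM(T_n)\le C$, and standard estimates for $1$-currents of bounded $\alpha$-mass (with fixed boundary mass bound) give a uniform mass bound $\Mass(T_n)\le C'$ and $\Mass(\partial T_n)=\Mass(b_n)\le C$, so the $T_n$ are equibounded normal currents. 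By the compactness theorem for normal currents there is a subsequence converging in the flat norm to some $T\in\Nor_1(K)$ with $\partial T = b$ (boundary passes to the flat limit), which already proves $b\in\mathscr{B}_0(K)$.

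The heart of the matter is the second condition: $\MM(S)\le C$ for every $S\in\OTP(b)$. Here I would invoke the stability theorem (Theorem \ref{t:stability_new}, the slight improvement of \cite{CDRMcpam}): given any $S\in\OTP(b)$, stability produces a sequence $S_n\in\OTP(b_n)$ with $\Flat_K(S_n-S)\to 0$ and, crucially, $\MM(S_n)\to\MM(S)$ (convergence of the $\alpha$-mass along the approximating minimizers, not merely lower semicontinuity). Since $S_n\in\OTP(b_n)$ and $b_n\in A_C$, we have $\MM(S_n)\le C$ for every $n$, and passing to the limit gives $\MM(S)\le C$. As $S\in\OTP(b)$ was arbitrary, this establishes $b\in A_C$ and hence $A_C$ is $\Flat_K$-closed. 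The final sentence then follows because $(\D_0(K),\Flat_K)$ is a metric space and a closed subset of a metric space inherits completeness once one knows the ambient space is complete on bounded sets — more precisely, any $\Flat_K$-Cauchy sequence in $A_C$ has uniformly bounded mass (all terms satisfy $\Mass\le C$), hence by the compactness theorem converges in $\Flat_K$ to some $b\in\D_0(K)$, which lies in $A_C$ by closedness; thus $(A_C,\Flat_K)$ is complete.

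I expect the main obstacle to be the precise form of the stability statement needed: the usual statement of stability in \cite{CDRMcpam} asserts that flat limits of minimizers for converging boundaries are minimizers for the limit boundary, but I need the slightly stronger "diagonal" direction — namely, for a \emph{given} minimizer $S$ of the limit problem, the existence of approximating minimizers $S_n$ for $b_n$ with $\MM(S_n)\to\MM(S)$. This is exactly what Theorem \ref{t:stability_new} is stated to provide ("slightly improving upon the main result of \cite{CDRMcpam}"), so I would simply cite it; the only care required is to check that the $S_n$ it produces indeed lie in $\OTP(b_n)$ (so that the bound $\MM(S_n)\le C$ applies), which should be built into the statement. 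A minor secondary point is making sure $A_C$ is nonempty — e.g.\ $b=0$ with $T=0$ its unique minimizer lies in $A_C$ — which is needed for "$(A_C,\Flat_K)$ is a complete metric space" to be a nontrivial assertion, and is remarked in the text preceding the lemma.
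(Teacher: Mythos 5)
There is a genuine gap in the step you yourself identify as the heart of the matter. Theorem \ref{t:stability_new} does \emph{not} provide the ``diagonal'' statement you invoke: it says only that if $S_n\in\OTP(b_n)$ and $S_n$ converges (along a subsequence, in flat norm) to some $T$, then $T\in\OTP(\partial T)$. It does not assert that, for a \emph{prescribed} minimizer $S\in\OTP(b)$, one can find $S_n\in\OTP(b_n)$ with $\Flat_K(S_n-S)\to 0$, and it certainly does not give convergence of the $\alpha$-masses $\MM(S_n)\to\MM(S)$ (only lower semicontinuity is available, see \cite{CDRMS}). Such a recovery-sequence statement is strictly stronger than the stability theorem and is nowhere established in the paper, so ``I would simply cite it'' does not close the argument.

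The gap is easily repaired, and the repair shows that no stability result is needed at all: every element of $\OTP(b)$ has $\alpha$-mass equal to the least transport energy $\mathbb{E}^{\alpha}(b)$, so to bound $\MM(S)$ for \emph{every} $S\in\OTP(b)$ it suffices to exhibit a \emph{single} current with boundary $b$ and $\alpha$-mass at most $C$. You already have one: in your first paragraph you produced, via the mass bound $\Mass(T_n)\leq C^{1-\alpha}\MM(T_n)\leq C^{2-\alpha}$ (this is \cite[Proposition 3.6]{CDRMcalcvar}) and the compactness theorem for normal currents, a flat limit $T\in\Nor_1(K)$ with $\partial T=b$; by lower semicontinuity of $\MM$ along flat convergence, $\MM(T)\leq\liminf_n\MM(T_n)\leq C$, hence $\mathbb{E}^{\alpha}(b)\leq C$ and therefore $\MM(S)\leq C$ for all $S\in\OTP(b)$. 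This is exactly the paper's proof; your treatment of $\Mass(b)\leq C$, of $b\in\mathscr{B}_0(K)$, and of completeness via compactness of $0$-currents with mass bounded by $C$ is fine and matches the paper.
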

\begin{proof}
The second part of the statement follows from the first part and from the $\mathbb{F}_K$-compactness of 0-currents with support in $K$ and mass bounded by $C$, see \cite[\S 4.2.17]{FedererBOOK}.

In order to prove that $A_C$ is $\Flat_K$ closed, let $(b_j)_{j\in\N}$ be a sequence of elements of $A_C$ and let $b$ be such that $\mathbb{F}_K(b_j-b)\to 0$ as $j\to\infty$. We want to prove that $b\in A_C$. By the lower semicontinuity of the mass (with respect to the flat convergence), we have $\Mass(b)\leq C$. For any $j\in\N$, let $T_j \in \OTP(b_j)$. By \cite[Proposition 3.6]{CDRMcalcvar}, we have $\mathbb{M}(T_j)\leq C^{1-\alpha}\mathbb{M}^\alpha(T_j)\leq C^{2-\alpha}$.
By the compactness theorem for normal currents, there exists $T\in{\mathscr{N}}_{1}(K)$ such that, up to (non relabeled) subsequences $\mathbb{F}_K(T_j-T)\to 0$. By the continuity of the boundary operator we have $\partial T=b$ and by the lower semicontinuity of the $\alpha$-mass, see \cite{CDRMS}, we have $\mathbb{M}^\alpha(T)\leq C$ and hence $b\in A_C$. 
\end{proof}

Consider the following subset of $A_C$, which represents the set of boundaries admitting non-unique minimizers:
$$NU_C:=\{b\in A_C: \exists\,\, T^1, T^2 \in\OTP(b) \text{ such that } T^1\neq T^2\}.$$
Notice that since $b\in A_C$ then $\MM(T^1)=\MM(T^2)\leq C$. We have the following:

\begin{lemma}\label{l:residual}
Assume that the set $A_C\setminus NU_C$ is $\mathbb{F}_K$-dense in $A_C$. Then it is residual.
\end{lemma}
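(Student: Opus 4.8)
The plan is to realise $A_C\setminus NU_C$ as a countable intersection of $\Flat_K$-open dense subsets of $A_C$, which makes it residual by the very definition recalled before Lemma~\ref{l:closed}. For $\delta>0$ I would set
\[
NU_{C,\delta}:=\bigl\{b\in A_C:\ \exists\, T^1,T^2\in\OTP(b)\ \text{with}\ \Flat_K(T^1-T^2)\ge\delta\bigr\}.
\]
Since $\Flat_K$ is a genuine norm on $\Nor_1(K)$ (it vanishes only on the zero current), $\OTP(b)$ fails to be a singleton exactly when $b\in NU_{C,\delta}$ for some $\delta>0$; hence $NU_C=\bigcup_{k\ge1}NU_{C,1/k}$ and
\[
A_C\setminus NU_C=\bigcap_{k\ge1}\bigl(A_C\setminus NU_{C,1/k}\bigr).
\]
So it is enough to prove that each $NU_{C,\delta}$ is $\Flat_K$-closed: granting this, every $A_C\setminus NU_{C,1/k}$ is $\Flat_K$-open, and it is dense because it contains $A_C\setminus NU_C$, which is dense by hypothesis; the conclusion then follows at once.

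To show that $NU_{C,\delta}$ is closed I would take a sequence $(b_j)_{j\in\N}\subset NU_{C,\delta}$ with $\Flat_K(b_j-b)\to0$; first $b\in A_C$ by Lemma~\ref{l:closed}. For each $j$ pick $T_j^1,T_j^2\in\OTP(b_j)$ with $\Flat_K(T_j^1-T_j^2)\ge\delta$. Exactly as in the proof of Lemma~\ref{l:closed}, the membership $b_j\in A_C$ gives $\Mass(T_j^i)\le C^{2-\alpha}$, so the compactness theorem for normal currents yields, along a (non‑relabelled) subsequence, currents $T^1,T^2\in\Nor_1(K)$ with $\Flat_K(T_j^i-T^i)\to0$ and $\partial T^i=b$ for $i=1,2$. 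The crucial point is then that $T^i\in\OTP(b)$: this is precisely the content of the stability property (Theorem~\ref{t:stability_new}, the slight strengthening of the main result of~\cite{CDRMcpam}) applied to the sequences $(T_j^i)_j$; equivalently, Theorem~\ref{t:stability_new} gives $\E(b_j)\to\E(b)$, and combined with the lower semicontinuity of the $\alpha$-mass this yields $\MM(T^i)\le\liminf_j\MM(T_j^i)=\lim_j\E(b_j)=\E(b)$, so $T^i$ is optimal. Finally, continuity of the flat norm along flat‑convergent sequences gives $\Flat_K(T^1-T^2)=\lim_j\Flat_K(T_j^1-T_j^2)\ge\delta>0$, hence $T^1\ne T^2$ and $b\in NU_{C,\delta}$.

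The genuinely nontrivial ingredient is this last step, namely that a flat limit of $\alpha$-mass minimizers for the boundaries $b_j$ is again an $\alpha$-mass minimizer for the limit boundary $b$ — in other words, that no $\alpha$-mass is lost in the limit. This is exactly the stability theorem of~\cite{CDRMcpam}, in the form of Theorem~\ref{t:stability_new}; everything else (the decomposition of $NU_C$ into a countable union, the Baire-type passage from closedness plus density to residuality, the mass bound and the compactness theorem for normal currents) is routine. The only point requiring a little care is to check that the version of the stability statement invoked indeed applies to sequences $b_j\to b$ lying in $A_C$ with the uniform energy bound $C$ — but this is built into the definition of $A_C$.
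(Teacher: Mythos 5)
Your proof is correct and follows essentially the same route as the paper: the same decomposition $NU_C=\bigcup_m NU_{C,1/m}$, the same closedness argument via the mass bound, compactness for normal currents and the stability result of Theorem \ref{t:stability_new}, and the same Baire-category conclusion. The only differences are cosmetic (you spell out why the union exhausts $NU_C$ and the continuity of $\Flat_K$ along flat-convergent sequences, which the paper leaves implicit).
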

\begin{proof}
For $m \in \N \setminus \{0\}$, consider the sets 
$$NU_C^m:=\{b\in A_C: \exists\, \{T^1,T^2\}\subset\OTP(b)\;{\rm with}\;\mathbb{F}_K(T^2-T^1)\geq m^{-1}\}.$$
Since $NU_C^m\subset NU_C$, then $(A_C\setminus NU_C^m)\supset(A_C\setminus NU_C)$ and hence, by assumption, $A_C\setminus NU_C^m$ is $\mathbb{F}_K$-dense in $A_C$ for every $m$. Therefore $NU_C^m$ has empty interior in $A_C$ for every $m$.

To conclude, it is sufficient to prove that $NU_C^m$ is closed for every $m$. Consider a sequence $(b_j)_{j\in\N}$ of elements of $NU_C^m$ and let $b\in A_C$ be such that $\mathbb{F}_K(b_j-b)\to 0$. We need to prove that $b\in NU_C^m$. For every $j\in\N$, take 
$$\{T^1_j,T^2_j\}\subset\OTP(b_j)\quad {\mbox{ with }}\quad \mathbb{F}_K(T^2_j-T^1_j)\geq m^{-1}.$$ 
As in the proof of Lemma \ref{l:closed}, we deduce that there exist $T^1,T^2\in{\Nor}_{1}(K),$ such that $\partial T^1=\partial T^2=b$ and, up to (non relabeled) subsequences, $\mathbb{F}_K(T^1_j-T^1)\to 0$, $\mathbb{F}_K(T^2_j-T^2)\to 0$ as $j\to\infty$. Clearly $\mathbb{F}_K(T^2-T^1)\geq m^{-1}$. By Theorem \ref{t:stability_new}, we have $\{T_1, T_2\}\subset\OTP(b)$, hence $b\in NU_C^m$.
\end{proof}

\section{Density of boundaries with unique minimizers: preliminary reductions}
\subsection{Reduction to integral boundaries and integer polyhedral minimizers}
\begin{lemma}\label{l:integral_bdry}
    For any $b \in A_C$ and $\eps >0$, there exist $\delta>0$ and a boundary $b'' \in A_{C-\delta}$ with
    \[ \Flat_K(b-b'') < \eps  \qquad \text{ and } \qquad
    b'' = \eta b_I\]
    for some $\eta >0$ and $b_I\in\I_0(K)$.
\end{lemma}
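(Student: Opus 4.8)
The plan is to reach $b''$ from $b$ in two moves: first a global rescaling $b\mapsto\lambda b$ with $\lambda\in(0,1)$, which strictly lowers the threshold $C$ at a negligible flat-distance cost, and then a discretization of the rescaled boundary into a Dirac measure with weights in $\eta\Z$, whose membership in $A_{C-\delta}$ will be secured by the stability theorem.

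For the rescaling, set $b':=\lambda b$. Since $S\mapsto\lambda S$ is a bijection of $\TP(b)$ onto $\TP(\lambda b)$ and $\MM(\lambda S)=\lambda^{\alpha}\MM(S)$, one has $\OTP(\lambda b)=\{\lambda T:T\in\OTP(b)\}$ and $\mathbb{E}^{\alpha}(b')=\lambda^{\alpha}\mathbb{E}^{\alpha}(b)$. Using that $\mathbb{E}^{\alpha}(b)\le C$ for every $b\in A_C$ (were it $+\infty$, then $\OTP(b)=\TP(b)\neq\emptyset$ would consist of currents of infinite $\alpha$-mass, against $b\in A_C$) and that $\lambda<\lambda^{\alpha}<1$, this gives $b'\in A_{C'}$ with $C':=\lambda^{\alpha}C<C$; moreover $\Flat_K(b-b')=(1-\lambda)\Flat_K(b)\le(1-\lambda)\Mass(b)\le(1-\lambda)C$. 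I would then fix $\lambda$ so close to $1$ that $(1-\lambda)C<\eps/2$ and set $\delta:=\tfrac{1}{2}(C-C')>0$, so that $C-\delta=\tfrac{1}{2}(C+C')\in(C',C)$. It then suffices to produce $b''=\eta b_I$, with $\eta>0$ and $b_I\in\I_0(K)$, satisfying $\Flat_K(b'-b'')<\eps/2$, $\Mass(b'')\le C'$ and $\mathbb{E}^{\alpha}(b'')\le C-\delta$.

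For the discretization I would write the Jordan decomposition $b'=\sigma_+-\sigma_-$, with $\sigma_\pm\in\M_+(K)$ of equal mass $M':=\tfrac{1}{2}\Mass(b')$ and disjoint compact supports. Given a small $r>0$, partition $\supp\sigma_+\cup\supp\sigma_-$ into finitely many nonempty Borel cells $Q_1,\dots,Q_N$ of diameter $<r$, none of them meeting both supports, pick $z_k\in Q_k$, and set $\hat\sigma_\pm:=\sum_k\sigma_\pm(Q_k)\,\delta_{z_k}$; moving the mass $\sigma_\pm\trace Q_k$ onto $z_k$ inside each cell shows $\Flat_K(\sigma_\pm-\hat\sigma_\pm)\le rM'$. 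Then, given a small $\eta>0$, replace each weight $\sigma_\pm(Q_k)$ by $\eta$ times an integer within $1$ of $\sigma_\pm(Q_k)/\eta$, chosen so that on each side these integers sum to $L:=\lfloor M'/\eta\rfloor$ (possible since $\sum_k\lfloor\sigma_\pm(Q_k)/\eta\rfloor\le\lfloor M'/\eta\rfloor\le\sum_k\lceil\sigma_\pm(Q_k)/\eta\rceil$, and every intermediate integer is attained by rounding one more cell up). This produces $b'':=\eta b_I$ with $b_I\in\I_0(K)$ of total multiplicity $L-L=0$ (hence $b''\in\mathscr{B}_0(K)$); the atoms of the two sides being distinct, $\Mass(b'')=2L\eta\le 2M'=\Mass(b')\le C'$, and since each weight moved by at most $\eta$ and the flat norm of a $0$-current is at most its mass, $\Flat_K\big((\hat\sigma_+-\hat\sigma_-)-b''\big)\le 2N\eta$. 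Altogether $\Flat_K(b'-b'')\le 2rM'+2N\eta$.

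Finally, letting $(r,\eta)\to(0,0)$ the boundaries $b''=b''_{r,\eta}$ converge to $b'$ in flat norm with uniformly bounded mass, so Theorem \ref{t:stability_new} gives $\mathbb{E}^{\alpha}(b''_{r,\eta})\to\mathbb{E}^{\alpha}(b')\le C'$. Choosing $r$ and $\eta$ small enough I then obtain simultaneously $\Flat_K(b-b'')\le\Flat_K(b-b')+\Flat_K(b'-b'')<\eps$, $\Mass(b'')\le C'<C-\delta$, and $\mathbb{E}^{\alpha}(b'')<C-\delta$; as $b''$ is finite atomic, $\OTP(b'')\neq\emptyset$ and each minimizer has $\alpha$-mass $\mathbb{E}^{\alpha}(b'')\le C-\delta$, so $b''\in A_{C-\delta}$, as required. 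I expect the main obstacle to be precisely this last estimate: plain discretization can sharply increase the $\alpha$-mass of the minimizers (splitting a single atom into many small ones is expensive for $\MM$), so keeping $b''$ inside $A_{C-\delta}$ genuinely requires both the margin $\delta$ created by the rescaling step and the continuity of $\mathbb{E}^{\alpha}$ along bounded-mass flat-convergent sequences, namely the (improved) stability Theorem \ref{t:stability_new}.
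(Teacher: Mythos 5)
Your first step (the dilation $b\mapsto\lambda b$, which buys the margin $\delta$ at negligible flat cost) is exactly the paper's first step and is fine. The gap is in the second step: after discretizing the boundary \emph{measure} cell-by-cell, you must show $\mathbb{E}^{\alpha}(b'')\leq C-\delta$, and your justification — that Theorem \ref{t:stability_new} gives $\mathbb{E}^{\alpha}(b''_{r,\eta})\to\mathbb{E}^{\alpha}(b')$ — does not work. First, Theorem \ref{t:stability_new} assumes $b_n\in A_C$, which is precisely what you are trying to prove for $b''_{r,\eta}$, so the appeal is circular. Second, even where it applies, that theorem only says that subsequential limits of minimizers are minimizers; combined with lower semicontinuity of $\MM$ it yields $\mathbb{E}^{\alpha}(b')\leq\liminf_n\mathbb{E}^{\alpha}(b_n)$, i.e.\ an inequality in the \emph{wrong} direction. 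What you need is an upper bound on $\mathbb{E}^{\alpha}(b'')$ for fixed small $(r,\eta)$, i.e.\ upper semicontinuity of $\mathbb{E}^{\alpha}$ along flat-convergent, mass-bounded sequences, and this is false in general: for $\alpha\leq 1-1/d$ take $b_n=b'+n^{-1}(\mu^+-\mu^-)$ with $\mu^\pm$ diffuse and $\mathbb{E}^{\alpha}(\mu^+-\mu^-)=+\infty$; then $\Flat_K(b_n-b')\to 0$ with bounded masses while $\mathbb{E}^{\alpha}(b_n)=+\infty$ for all $n$. Nor can you repair the construction by transporting $\sigma_\pm\trace Q_k$ onto the atom $z_k$ inside each cell: when $\sigma_\pm$ has a non-atomic part and $\alpha\leq 1-1/d$, that local collection may itself have infinite $\alpha$-cost, so there is no obvious competitor for $b''$ with $\alpha$-mass close to $\mathbb{E}^{\alpha}(b')$. (A smaller inaccuracy: the Jordan decomposition gives mutually singular $\sigma_\pm$, not disjoint supports, so cells ``not meeting both supports'' need not exist; this is cosmetic compared to the energy issue.)

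This is exactly the point where the paper argues differently: instead of discretizing the boundary, it discretizes an optimal \emph{current}. One takes $T\in\OTP(b)$, rescales it, and applies the polyhedral approximation theorem \cite[Theorem 5]{MW} to produce $T'_\eps\in\Po_1(K)$ with $\MM(T'_\eps)\leq\MM(T_\eps)$, $\Mass(\partial T'_\eps)\leq\Mass(\partial T_\eps)$ and $\Flat_K(T'_\eps-T_\eps)$ small; rounding the finitely many polyhedral multiplicities down to multiples of $\eta$ then defines $T''$ and $b'':=\partial T''$. The whole point is that $b''$ comes equipped with an explicit competitor $T''$ satisfying $\MM(T'')\leq\MM(T_\eps)<C$, so $\mathbb{E}^{\alpha}(b'')\leq\MM(T'')$ and membership in $A_{C-\delta}$ is automatic — no continuity of $\mathbb{E}^{\alpha}$ is ever needed. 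Your proposal is missing this ingredient (a boundary discretization compatible with a low-energy transport path), and without it, or something replacing \cite[Theorem 5]{MW}, the final estimate $\mathbb{E}^{\alpha}(b'')\leq C-\delta$ remains unproved.
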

\begin{proof}
Without loss of generality and up to rescaling, we can assume $C=1$ and write $A$ instead of $A_C$. Let $b \in A$ and $T\in\OTP(b)$ and define $T_{\varepsilon}:= (1-\varepsilon/4) T$. Then $b_{\varepsilon}:=\partial T_{\varepsilon}=(1-\varepsilon/4)b$ and $T_{\varepsilon}\in\OTP(b_{\varepsilon})$ satisfy 
\begin{equation}\label{e:estTepsbeps}
\mathbb{M}^{\alpha} (T_{\varepsilon})\le (1 - \varepsilon/4)^{\alpha}\quad\mbox{ and }\quad \Mass(b_\varepsilon)\leq 1 - \varepsilon/4.  
\end{equation}
Since we also have $\mathbb{M}(b-b_{\varepsilon}) \le \varepsilon/4$, we deduce that 
\begin{equation}\label{e:estflatbeps}
\mathbb{F}_K(b-b_{\varepsilon}) \le \varepsilon/4.    
\end{equation}
Now apply \cite[Theorem 5]{MW} to obtain, possibly after rescaling, a current $T'_\eps \in\Po_1(K)$, such that, denoting $b'_\eps:=\partial T'_\eps$, we have 
\begin{equation}\label{e:estTprimo}
\mathbb{M}^{\alpha}(T'_\eps)\leq \mathbb{M}^{\alpha}(T_{\varepsilon}),\quad\mathbb{M}(b'_\eps) \leq \mathbb{M}(b_{\varepsilon})\quad\mbox{and}\quad \Flat_K(T'_\eps-T_\eps)\leq \varepsilon/4,    
\end{equation}
and in particular $\Flat_K(b'_\eps-b_\varepsilon)\leq \varepsilon/4$. We can write 
$$T'_\eps = \sum_{i=1}^{N} \theta_i' \llbracket\sigma_i\rrbracket$$ as in \eqref{e:poly}. Up to changing the orientation of $\llbracket\sigma_i\rrbracket$, we may assume $\theta'_i>0$ for every $i$. Fix $\eta := \varepsilon/(16N)$ and denote %the larger integer multiple of $\eta$ to $\alpha_i'' as%
$\theta_i'' := \eta  \left\lfloor \frac{\theta_i'}{\eta} \right\rfloor$ (where $\lfloor x\rfloor$ is the largest integer smaller than or equal to $x$) so that
\begin{equation}\label{e:appx}
     0\leq\theta_i'-\theta_i''< \frac{\varepsilon}{16N} \qquad\mbox{for every $i \in \{1,\dots,N\}$}.
      \end{equation} Define $$T'':= \sum_{i=1}^{N}\theta_i'' \llbracket\sigma_i\rrbracket$$ and denote $b''=\partial T''$. Observe that by \eqref{e:appx} and \eqref{e:estTprimo} we have 
\begin{equation}\label{e:alphamasstsecondo}
    \MM(T'')\leq \MM(T'_\eps)\leq \MM(T_\varepsilon)< 1.
\end{equation} 
      
For every $i\in \{1,\dots,N\}$, we denote by $x_i$ and $y_i$ respectively the first and second endpoint of the oriented segment $\sigma_i$, so that we can write 
$$b'_\eps= \sum_{i=1}^{N} \theta_i' (\delta_{y_i} - \delta_{x_i})$$ 
which we can rewrite as 
$$b'_\eps= \sum_{j=1}^{M}\beta_j' \delta_{z_j},$$
where all points $z_j$ are distinct and
$$\beta_j':= \left( \sum_{\{i: y_i = z_j \}} \theta_i' - \sum_{\{i: x_i = z_j\}} \theta_i'\right).$$
Analogously, we define $$\beta_j'':= \left( \sum_{\{i: y_i = z_j \}} \theta_i'' - \sum_{\{i: x_i = z_j\}} \theta_i''\right),$$
so that we can write
$$b''= \sum_{j=1}^{M}\beta_j'' \delta_{z_j}.$$
Thus we obtain
\begin{equation}\label{e:massb''b'}
\begin{split}
\Mass(b''-b'_\eps ) = \sum_{j=1}^{M}|\beta_j'' - \beta_j'|
\leq \Bigg|\sum_{\{i: x_i = z_j \}} (\theta_i''- \theta_i')\Bigg| + \Bigg|\sum_{\{i: y_i = z_j\}} (\theta_i''- \theta_i') \Bigg| \stackrel{(\ref{e:appx})}< \frac{\eps}{16} + \frac{\eps}{16} = \frac{\eps}{8}
\end{split}
\end{equation} 
and by \eqref{e:estTepsbeps} and \eqref{e:estTprimo} we deduce
\begin{equation}\label{e:massbsecondo}
    \Mass(b'') \leq \Mass(b'_\eps) +\Mass(b''-b'_\eps) <1. 
\end{equation}
Combining \eqref{e:massb''b'} with \eqref{e:estflatbeps} and \eqref{e:estTprimo}, we get
$\mathbb{F}_K(b-b'') < \varepsilon$. The conclusion follows denoting $b_I:=\eta^{-1}b''$ and observing that $b_I\in\I_0(K)$ (as the $\theta''_i$ are multiples of $\eta$) and that by \eqref{e:alphamasstsecondo} and \eqref{e:massbsecondo} we have $b''\in A_{1-\delta}$ for some $\delta>0$.
\end{proof}

\begin{lemma}
\label{l:integral_minimizers}
If $b\in\I_0(K)$ and $T\in\Nor_1(K)$ is in $\OTP(b)$ then $T\in \Po_1(K)\cap \I_1(K)$.
\end{lemma}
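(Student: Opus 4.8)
The plan is to prove the two assertions---integrality of multiplicities and polyhedrality---more or less separately, using the known structure theory of $\alpha$-mass minimizers together with the fact that the boundary $b$ has integer multiplicities. First I would recall that since $b \in \I_0(K)$ and the infimum $\E(b)$ is finite (it is attained by $T$), one can invoke the approximation result of \cite{MW} (already used in the proof of Lemma~\ref{l:integral_bdry}) to reduce to competitors that are polyhedral; more to the point, I would use the known fact (see \cite{xiaBoundary, CDRMcalcvar} or the calibration/rectifiability results cited earlier) that any $T \in \OTP(b)$ is automatically rectifiable, $T = (E,\tau,\theta)$, and that away from the finitely many atoms of $b$ its support is, locally, a finite union of segments meeting at branch points. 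The key classical input is the \emph{stationarity} of an optimal network: at any point of $\supp(T)$ which is not a boundary atom, the incoming and outgoing oriented multiplicities must balance, and along each ``edge'' the multiplicity $\theta$ is locally constant because the energy density $|\theta|^\alpha$ is strictly concave, so splitting or recombining flow along a single edge strictly decreases the $\alpha$-mass unless $\theta$ is constant there.

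Granting this local structure, I would argue integrality by a conservation-of-flow (Kirchhoff) argument: decompose $T$ into a (finite, by \cite{Gilbert}-type finiteness, or by the cone structure) superposition of curves from sources to sinks, observe that the net multiplicity across any cut separating a subset of the boundary atoms from the rest equals the corresponding partial sum of the integer boundary masses, hence is an integer; then, using that on each edge $\theta$ is constant and that the edges form a graph whose ``flux equations'' at the vertices have an integer right-hand side coming from $b$, conclude by induction on the edges (peeling off leaves of the graph) that every edge multiplicity $\theta_i$ is an integer. This is exactly where I expect the main obstacle to lie: making rigorous that $\supp(T)$ is a \emph{finite} graph with finitely many branch points and no accumulation, so that the peeling-off induction terminates. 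The cleanest route is probably to invoke the single-topology uniqueness/finiteness statements the authors themselves prove downstream (Lemmas~\ref{l:finiteminimizers} and the discrete reduction), but to avoid circularity I would instead lean on the a priori regularity theory for $\alpha$-minimizers (interior regularity away from branch points, local finiteness of branch points in the interior, e.g.\ \cite{BeCaMo, morsant, xia2}) to get that $E$ is, locally in $K \setminus \supp(b)$, a finite union of segments.

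Once integrality of the multiplicities and the locally-polyhedral structure are in hand, polyhedrality globally follows: $E$ is a locally finite union of line segments with integer multiplicities, with branch points only at a locally finite set together with the finitely many atoms of $b$; since $K$ is compact and the total mass $\Mass(T) \le C^{1-\alpha}\E(b)$ is finite, this union is in fact finite, so $T$ is an integer polyhedral current, i.e.\ $T \in \Po_1(K) \cap \I_1(K)$. I would present the argument in the order: (1) recall rectifiability and the local segment/branch-point structure of any $T \in \OTP(b)$; (2) prove each edge multiplicity is constant via strict concavity of $t \mapsto t^\alpha$; (3) prove edge multiplicities are integers by the Kirchhoff/leaf-peeling argument using $b \in \I_0(K)$; (4) upgrade local finiteness of the segment decomposition to global finiteness using compactness of $K$ and the mass bound, concluding $T \in \Po_1(K)\cap\I_1(K)$. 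The delicate point throughout is step~(1)/(4): controlling the combinatorics of the branch set so the induction in step~(3) is well founded.
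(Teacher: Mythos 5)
Your integrality step has a genuine gap: the ``Kirchhoff plus leaf-peeling'' induction is only well founded if the graph spanned by $\supp(T)$ is acyclic. If the support contained a cycle, the flux balance equations at the vertices would \emph{not} determine the edge multiplicities (one can add an arbitrary real constant flow around the cycle without changing the boundary), every edge of the cycle fails to be a bridge so no cut isolates it, and after peeling all leaves you are left with a nonempty subgraph in which every vertex has degree at least two, so the induction stalls. The absence of loops in the support of an $\alpha$-mass minimizer is a nontrivial property of minimizers --- it is exactly \cite[Proposition 7.8]{BCM} --- and you never state or invoke it; your strict-concavity remark in step (2) only gives constancy of $\theta$ along a single edge, not acyclicity of the global graph. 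The paper's proof makes this dependence explicit: it first gets a \emph{finite} decomposition $T=\sum_i a_i\llbracket\gamma_i\rrbracket$ into simple paths from the good decomposition \cite[Proposition 3.6]{CDRMcalcvar} together with the single path property \cite[Proposition 7.4]{BCM} (using that $b$ is finite atomic), deduces polyhedrality, and then proves integrality by looking at the ``non-integer part'' $\hat T=\sum_{\theta_\ell\notin\Z}\theta_\ell\llbracket\sigma_\ell\rrbracket$: since $\partial\hat T=b-\partial(T-\hat T)\in\I_0(K)$, every endpoint of a segment of $\hat T$ must meet at least two such segments, so $\supp(\hat T)$ would contain a loop, contradicting \cite[Proposition 7.8]{BCM}. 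Your argument needs the same loop-free input, and without it the conclusion does not follow.

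A secondary weak point is your step (4): ``locally finite union of segments $+$ compactness of $K$ $+$ finite mass $\Rightarrow$ finitely many segments'' is not valid as stated. Interior regularity (\cite{xia2, BeCaMo, morsant}) gives local finiteness only away from $\supp(b)$, and a priori the segments and branch points could accumulate at the finitely many atoms of $b$ with finite total length, so compactness and the mass bound do not by themselves yield a finite graph; you would need boundary regularity near the atoms (\cite{xiaBoundary}) or, as the paper does, the good decomposition and single path property, which produce finiteness directly and avoid the regularity theory altogether.
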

\begin{proof}
Combining the \emph{good decomposition} properties of optimal transport paths \cite[Proposition 3.6]{CDRMcalcvar} and their \emph{single path property} \cite[Proposition 7.4]{BCM} with the assumption $\partial T\in\Po_0(K)$, we deduce that there are finitely many Lipschitz simple paths $\gamma_1,\dots,\gamma_N$ of finite length such that $T$ can be written as a $T=\sum_{i=1}^N a_i\llbracket\gamma_i\rrbracket$, where $a_i>0$ for every $i$ and $\llbracket\gamma_i\rrbracket\in\I_1(K)$ is the current $({\rm{Im}}(\gamma_i), \gamma_i'/|\gamma_i'|, 1)$. Moreover, again by \cite[Proposition 7.4]{BCM}, one can assume that ${\rm{Im}}(\gamma_i)\cap {\rm{Im}}(\gamma_j)$ is connected for every $i,j$, which in turn implies that $T\in\Po_1(K)$. Hence we can write $$T:=\sum_{\ell=1}^N\theta_\ell\llbracket\sigma_\ell\rrbracket,$$
where $\sigma_\ell$ are non-overlapping oriented segments and $\theta_\ell\in\R$. We want to prove that $\theta_\ell\in\Z, \forall\ell$.

Denote $$\mathcal{I}:=\{\ell\in\{1,\dots,N\}:\theta_\ell\in\R\setminus\Z\}$$
and let $\hat T:=\sum_{\ell\in\mathcal{I}}\theta_\ell\llbracket\sigma_\ell\rrbracket$. Assume by contradiction that $\hat T\neq 0$.
Note that $T-\hat T\in\I_1(K)$ and therefore, since $b\in\I_0(K)$, we have $\partial\hat T=b-\partial(T-\hat T)\in\I_0(K)$. Hence, for every point $x$ in the support of $\partial\hat T$ there are at least two distinct segments $\sigma_{\ell_1}$ and $\sigma_{\ell_2}$ having $x$ as an endpoint. This implies that the support of $\hat T$, and in particular also the support of $T$, contains a loop, which contradicts \cite[Proposition 7.8]{BCM}.
\end{proof}
\subsection{Finiteness of the set of minimizers for an integral boundary}
\begin{definition}[Topology and branch points]\label{d:topo} \normalfont
Let $b\in\I_0(K)$ and let $T, T'\in\Po_1(K)$ with $\partial T=\partial T'=b$. We say that $T$ and $T'$ have the same \emph{topology} if there exist two ordered sets, each made of distinct points, $\{x_1,\dots,x_M\}$ and $\{x'_1,\dots,x'_M\}$ with the following properties:
\begin{itemizeb}
\item[(i)] for every $p\in\supp(b)$ there exists $i$ such that $x_i=p=x'_i$;
\item[(ii)] denoting $\sigma_{ij}$ the segment with first endpoint $x_i$ and second endpoint $x_j$ and $\sigma'_{ij}$ the segment with first endpoint $x'_i$ and second endpoint $x'_j$, $T$ and $T'$ can be written respectively as 
\begin{equation}\label{e:def_topo}
T=\sum_{i<j}a_{ij}\llbracket\sigma_{ij}\rrbracket,\quad T'=\sum_{i<j}a'_{ij}\llbracket\sigma'_{ij}\rrbracket, \quad \mbox{ for some $a_{ij}, a'_{ij}\in\R$.}    
\end{equation}
\item[(iii)] the representations in \eqref{e:def_topo}, restricted to the nonzero addenda, is of the same type as \eqref{e:poly}. In particular, if $a_{ij}$ and $a_{kl}$ (resp. $a'_{ij}$ and $a'_{kl}$) are nonzero, then $\sigma_{ij}$ and $\sigma_{kl}$ (resp. $\sigma'_{ij}$ and $\sigma'_{kl}$) have disjoint interiors. Moreover, the number of nonzero addenda in the representation of $T$ (resp. $T'$) given in \eqref{e:def_topo} coincides with the smallest number $N$ for which $T$ (resp. $T'$) can be written as in \eqref{e:poly}.
\item [(iv)]$a_{ij}=0$ if and only if $a'_{ij}=0$. In particular, the number $N$
of the previous point is the same for $T$ and $T'$.
\end{itemizeb}
One can check that the above conditions define an equivalence relation on the set of polyhedral currents. We call the \emph{topology} of a polyhedral current $T$ the corresponding equivalence class. Notice that the number $M$ depends only on the equivalence class and for every $T$ the (unordered) set $\{x_1,\dots,x_M\}$ is uniquely determined, by property (iii). The set $\{x_1,\dots,x_M\}\setminus \supp(b)$ is called the set of \emph{branch points} of $T$ and denoted by $\BR(T)$. By Lemma \ref{l:integral_minimizers}, for every $T\in\OTP(b)$ the topology of $T$ and the set $\BR(T)$ are well defined. 
\end{definition}

\begin{lemma}\label{l:numberBranch}
	Let $0\neq b\in\I_0(K)$ and $T \in \OTP(b)$. Then $ \Haus^0(\BR(T))\leq \Haus^0(\supp(b))-2$.
\end{lemma}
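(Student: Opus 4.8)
The plan is to exploit the optimality of $T$ together with the structure of polyhedral transport paths (Lemma \ref{l:integral_minimizers}) to show that branch points must be genuine \emph{interior} vertices where at least three segments meet, and then run a counting (Euler-type) argument on the underlying graph. First I would fix the polyhedral representation $T=\sum_{\ell=1}^N\theta_\ell\llbracket\sigma_\ell\rrbracket$ provided by Lemma \ref{l:integral_minimizers}, with the vertex set $V=\{x_1,\dots,x_M\}$ as in Definition \ref{d:topo}; recall $\supp(b)\subseteq V$ and $\BR(T)=V\setminus\supp(b)$. The key local observation is that at every branch point $p\in\BR(T)$ at least three of the segments $\sigma_\ell$ must be incident: indeed, by the minimality of the representation in (iii), no segment is superfluous, and $\partial T=b$ is supported away from $p$, so the divergence of $T$ at $p$ vanishes; if only one segment were incident its multiplicity would have to be zero, and if exactly two were incident and collinear they would merge into one segment (contradicting minimality of $N$), while if two were incident and not collinear the conservation of the vector flux $\sum \pm\theta_\ell \tau_\ell = 0$ at $p$ would force both multiplicities to vanish. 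Hence $\deg(p)\geq 3$ for every $p\in\BR(T)$.

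Next I would set up the graph-theoretic count. Let $G$ be the graph with vertex set $V$ and edge set $\{\sigma_1,\dots,\sigma_N\}$; write $m=\Haus^0(\supp(b))$ and $k=\Haus^0(\BR(T))$, so $M=m+k$. The support of $T$ is connected — or more precisely, one may reduce to the connected case by splitting $b$ and $T$ into the connected components of $\supp(T)$, handling each component separately and summing — and it contains no loop (no cycle), again by \cite[Proposition 7.8]{BCM} as already used in the proof of Lemma \ref{l:integral_minimizers}; thus $G$ is a forest, and on each component a tree, so $N = M - c$ where $c$ is the number of components, i.e. $N \le M-1$ in the connected case. Summing the degree inequality over branch points and using the handshake lemma $\sum_{v\in V}\deg(v) = 2N$, together with $\deg(v)\ge 1$ for the boundary vertices (each point of $\supp(b)$ has nonzero multiplicity so is incident to at least one segment), gives
\[
3k \le \sum_{p\in\BR(T)}\deg(p) = 2N - \sum_{q\in\supp(b)}\deg(q) \le 2N - m \le 2(M-1) - m = 2(m+k) - 2 - m,
\]
which rearranges to $k \le m-2$, i.e. $\Haus^0(\BR(T)) \le \Haus^0(\supp(b))-2$. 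The hypothesis $b\neq 0$ guarantees $m\ge 2$ so the bound is meaningful and $T\neq 0$.

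The main obstacle is the connectedness/no-loop reduction and making the degree-three claim airtight at branch points: one must be careful that the chosen polyhedral representation is the \emph{minimal} one (smallest $N$) as in Definition \ref{d:topo}(iii), since otherwise artificial degree-two vertices could appear, and one must correctly invoke the single-path and no-loop structure from \cite{BCM} (used already in Lemma \ref{l:integral_minimizers}) to know $\supp(T)$ is a finite union of trees. Once the graph is genuinely a forest with all branch vertices of degree $\ge 3$, the counting is routine; handling several connected components only makes the inequality stronger (each extra component replaces a $-1$ in Euler's formula by a larger deficit while redistributing at least two boundary vertices per component), so the connected case is the worst case and suffices.
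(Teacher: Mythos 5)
Your route is genuinely different from the paper's: the paper argues by a minimal-counterexample induction on $\Haus^0(\supp(b))$, decomposing $T$ at a branch point $p$ into the restrictions $T_1,\dots,T_m$ to the connected components of $\supp(T)\setminus B_\varepsilon(p)$, showing $m\geq 3$, and applying the statement to each piece; you instead do a single global Euler-type count on the polyhedral graph. Your counting itself is sound: granting that every branch point has degree at least $3$, that every point of $\supp(b)$ has degree at least $1$, and that the support contains no loops (\cite[Proposition 7.8]{BCM}, so the abstract graph is a forest and $N=M-c$), the handshake inequality gives $3k+m\leq 2N=2(m+k)-2c$, hence $k\leq m-2c\leq m-2$, and the several-components case is indeed automatic rather than a worst-case reduction.

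However, there is a genuine error in your justification of the degree-$\geq 3$ claim, precisely in the case of a branch point incident to exactly two non-collinear segments. There is no ``conservation of the vector flux'' $\sum\pm\theta_\ell\tau_\ell=0$ at a point where $\partial T$ vanishes: the boundary of a $1$-current at a point is a scalar balance, not a vectorial one. For instance, if $a,p,c$ are not collinear, the current $\theta\llbracket\sigma_{ap}\rrbracket+\theta\llbracket\sigma_{pc}\rrbracket$ has boundary $\theta(\delta_c-\delta_a)$ and no boundary at $p$, with both multiplicities nonzero; so the boundary condition alone does not force the multiplicities to vanish, and your argument as written does not exclude such corners. What excludes them is the optimality of $T$ (which you announce at the outset but do not actually use at this step): shortcutting the corner strictly decreases length, hence strictly decreases the $\alpha$-mass even if the shortcut overlaps other segments, by subadditivity of $t\mapsto t^\alpha$; this contradicts $T\in\OTP(b)$. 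This is exactly how the paper disposes of its $m=2$ case (``the only two segments with nonzero coefficient having $p$ as an endpoint cannot be collinear by property (iii) of Definition \ref{d:topo}: this contradicts the fact that $T\in\OTP(b)$''). With that single replacement your degree bound holds, and the remaining steps (degree-$1$ exclusion via the boundary, collinear degree-$2$ exclusion via minimality of $N$ in Definition \ref{d:topo}(iii), the forest count) are correct, yielding a valid and arguably more elementary alternative to the paper's inductive decomposition.
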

\begin{proof}
	Suppose without loss of generality that $\Haus^0(\BR(T))>0$. Assume by contradiction that the lemma is false and let $n$ be the minimal number such that there exist $b\in\I_0(K)$ and $T \in \OTP(b)$ such that $$\Haus^0(\BR(T))+2> n=\Haus^0(\supp(b)).$$ 
	Notice that $n>2$. 
	Fix $p \in \BR(T)$ and let $\varepsilon>0$ be such that $$(\overline B_{\varepsilon}(p)\setminus\{p\})\cap(\supp(b)\cup\BR(T))=\emptyset.$$ Denote by $T_1, \dots, T_m$ the restriction of $T$ to the connected components of $\supp(T)\setminus B_\varepsilon(p)$. We notice that $m \geq 3$. Indeed, if $m=1$ we would have the contradiction $p\in\supp(b)$ and if $m=2$, writing $T$ as in \eqref{e:def_topo}, the only two segments with nonzero coefficient having $p$ as an endpoint cannot be collinear by property (iii): this contradicts the the fact that $T\in\OTP(b)$. Observe that for every $i$ we have that $\supp(\partial T_i)\setminus \supp(b)$ consists of exactly one point $p_i$, so that
	\begin{equation}\label{b}
 n = \sum_{i=1}^m \left( \Haus^0(\supp(\partial T_i)) - 1 \right).
 \end{equation}
By minimality of $n$ and the fact that $m \geq 3$, we have \begin{equation}\label{i_s} \Haus^0(\BR(T_i)) \leq \Haus^0(\supp(\partial T_i)) -2 \quad \text{ for all $i \in \{1,\dots ,m\}$}. \end{equation} Since $m\geq 3$, the combination of \eqref{b} and \eqref{i_s} leads to a contradiction.
\end{proof}

\begin{lemma}\label{l:topo_and_support}
Let $b\in\I_0(K)$ and let $T,T'\in\Po_1(K)$ with $\partial T = b =\partial T'$ have the same topology. Assume moreover that $\supp(T)$ and $\supp(T')$ do not contain loops. Write $T$ and $T'$ as in \eqref{e:def_topo} with properties (i)-(iv) and with the same orientation on each segment. Then $a_{ij}=a'_{ij}$ for every $i,j$.
\end{lemma}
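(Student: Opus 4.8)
The plan is to argue that the coefficients are determined purely by the combinatorics of the topology together with the boundary $b$, independently of where the branch points sit. The key structural input is that $\supp(T)$ contains no loops: combined with property (iii) of Definition \ref{d:topo}, this says the underlying graph $G$ of $T$ (vertices $x_1,\dots,x_M$, edges those $\{i,j\}$ with $a_{ij}\neq 0$) is a forest, and by property (iv) the graph $G'$ of $T'$ is the \emph{same} abstract graph on the same index set. So it suffices to show that on a tree (we may treat each connected component separately) the constraint $\partial T = b$ determines all edge coefficients. First I would record that $b$ is supported in $\{x_1,\dots,x_M\}$ with $b = \sum_i \beta_i \delta_{x_i}$ for prescribed $\beta_i\in\R$ (with $\beta_i=0$ allowed only at non-leaf vertices, since a leaf of the forest that is not in $\supp(b)$ would be a branch point of degree $1$, impossible, or would carry zero coefficient on its unique edge, contradicting the minimality clause in (iii)).

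The core step is a leaf-peeling induction on the number of edges of the tree. Pick a leaf $x_i$ of $G$; it has a unique incident edge, say $\sigma_{ij}$ (oriented, say, from $x_i$ to $x_j$). Testing the boundary condition at $x_i$ — equivalently, evaluating $\partial T$ on a $0$-form localized near $x_i$ — forces $\pm a_{ij} = \beta_i$, with the sign dictated solely by the chosen orientation of $\sigma_{ij}$; hence $a_{ij}$ is uniquely determined by $b$ and the topology, and the identical computation on $T'$ (same graph, same orientations by hypothesis) gives $a'_{ij} = a_{ij}$. Now remove the edge $\sigma_{ij}$ and the vertex $x_i$: what remains is $T - a_{ij}\llbracket\sigma_{ij}\rrbracket$ and $T' - a_{ij}\llbracket\sigma'_{ij}\rrbracket$, which are polyhedral currents with the same (smaller) topology and boundaries $b - a_{ij}(\delta_{x_j}-\delta_{x_i})$ and $b - a_{ij}(\delta_{x'_j}-\delta_{x'_i})$ respectively; these are again forests without loops, so the inductive hypothesis applies and yields equality of all remaining coefficients. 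The base case is a tree with no edges, where there is nothing to prove.

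A small technical point to handle carefully is the bookkeeping between the geometric objects $\sigma_{ij}$ (actual segments in $\R^d$) and the abstract edges $\{i,j\}$: property (iii) guarantees that distinct nonzero addenda have segments with disjoint interiors, so no cancellation or merging of segments occurs and the incidence structure at each $x_i$ faithfully reflects the graph — in particular the evaluation of $\partial T$ at $x_i$ really does see exactly the coefficients of the edges incident to $x_i$ in $G$, and ``leaf of the graph'' coincides with ``vertex met by exactly one nonzero segment.'' The main (and essentially only) obstacle is making this leaf-peeling rigorous at the level of currents rather than formal graphs: one must check that removing a boundary leaf keeps us inside the class of polyhedral currents satisfying (i)--(iv) with a strictly smaller $M$ or $N$, so that the induction is well-founded; once that is set up, the argument is routine, since the whole point is that a tree has no cycles and therefore no freedom in the edge weights once the boundary is fixed.
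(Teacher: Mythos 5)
Your core idea --- at a leaf of the (loop-free, hence forest-like) graph the coefficient of the unique incident segment is forced to equal the multiplicity of $b$ at that point, and a tree then leaves no freedom in the edge weights --- is exactly the mechanism of the paper's proof, which packages the same leaf argument as a minimal-counterexample argument in the number $M$ of Definition \ref{d:topo} rather than as an explicit leaf-peeling induction.

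There is, however, a concrete gap in your induction step. After peeling the leaf edge you yourself write that the reduced currents have boundaries $b-a_{ij}(\delta_{x_j}-\delta_{x_i})$ and $b-a_{ij}(\delta_{x'_j}-\delta_{x'_i})$: these are in general \emph{different} $0$-currents, because only the points of $\supp(b)$ are shared between the two configurations (property (i) of Definition \ref{d:topo}). If the inner endpoint $x_j$ is a branch point of $T$, then typically $x_j\neq x'_j$; the peeled vertex becomes a new boundary point sitting at different locations for the two reduced currents, so the pair no longer has a common boundary, and property (i) (which is phrased relative to a common $b$) fails. Hence ``the inductive hypothesis applies'' does not go through as stated: the lemma itself cannot serve as the statement you induct on. The repair is short but must be made explicit: induct instead on the purely combinatorial statement that if $T$ and $T'$ carry the same abstract graph with the same orientations and, for \emph{every index} $i$, the coefficient of $\delta_{x_i}$ in $\partial T$ equals the coefficient of $\delta_{x'_i}$ in $\partial T'$, then $a_{ij}=a'_{ij}$ for all $i,j$. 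This hypothesis holds initially (branch points carry zero flux, points of $\supp(b)$ carry the flux prescribed by $b$, matched index by index, since the $x_i$ are distinct) and is preserved by leaf-peeling, because the flux at index $j$ changes by the same amount $a_{ij}$ for both currents. With that reformulation your argument is complete; alternatively one can argue as the paper does, taking a counterexample minimizing $M$ and reaching the contradiction directly at a leaf where the two coefficients would have to differ while both being equal to the multiplicity of $b$ at that point.
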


\begin{proof}
By contradiction, let $T,T'$ be nonzero currents with the same topology, $\partial T = b =\partial T'$, and minimizing the quantity $M$ in Definition \ref{d:topo} among all pairs for which the lemma is false. We claim that there exists a point $p\in\supp(b)$ and (up to reordering) indexes $i,j\in\{1,\dots,M\}$ such that
\begin{itemizeb}
\item [(a)] $a_{lj}=0=a'_{lj}$ for every $l\neq i$;
\item [(b)] $x_j=p=x'_j$ and $a_{ij}\neq a'_{ij}$, with $a_{ij},a'_{ij}\in\R\setminus\{0\}$. 
\end{itemizeb}
The validity of (a) follows from the absence of loops. On the other hand, if a point $p$ as in (a) violated (b), one could restrict the currents $T$ and $T'$ respectively to the complementary of $\sigma_{ij}$ and $\sigma'_{ij}$, thus contradicting the minimality of $M$.
The validity of (a) and (b) is a contradiction because the multiplicities $a_{ij}$ and $a'_{ij}$ correspond to the multiplicity of $p$ as point in the support of $b$. 
\end{proof}

\begin{lemma}\label{l:supports_determine_current}
Let $b\in\I_0(K)$ and $S,T\in\OTP(b)$ with $\supp(S)=\supp(T)$. Then $S=T$. 
\end{lemma}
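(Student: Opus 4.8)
The plan is to reduce the statement to showing that $R:=S-T$ is the zero current. By Lemma~\ref{l:integral_minimizers}, both $S$ and $T$ belong to $\Po_1(K)\cap\I_1(K)$, so $R\in\Po_1(K)$ since polyhedral currents form a subgroup, and $\partial R=\partial S-\partial T=b-b=0$. If $b=0$ there is nothing to prove, because then $\MM(S)=\E(0)=0=\MM(T)$ forces $S=T=0$; hence I may assume $b\neq 0$ and set $E:=\supp(S)=\supp(T)$, a nonempty finite union of segments, noting that $\supp(R)\subseteq E$. The crucial additional ingredient is optimality: since $S,T\in\OTP(b)$, by \cite[Proposition~7.8]{BCM} neither $\supp(S)$ nor $\supp(T)$ contains a loop, so $E$ contains no loop.

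I then invoke the elementary principle that a polyhedral $1$-current with vanishing boundary and loop-free support must be zero. To this end I write $R=\sum_{k=1}^{P}c_k\a{\tau_k}$ in simplicial form, where each $c_k\neq 0$, the $\tau_k$ are oriented segments with $\tau_k\cap\tau_{k'}$ either empty or a common endpoint for $k\neq k'$, and $\supp(R)=\bigcup_{k}\tau_k$. The segments $\tau_k$ together with their endpoints form a finite graph $G$ embedded in $E$; since $E$ contains no loop, $G$ contains no cycle. If $R\neq 0$ then $G$ has at least one edge, and being a nonempty finite forest it has a vertex $v$ of degree one, incident to exactly one edge $\tau_{k_0}$. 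Because no other $\tau_k$ has $v$ as an endpoint, and $v$ lies in the relative interior of no $\tau_k$ (by the simplicial form), the coefficient of $\delta_v$ in $\partial R=\sum_{k}c_k\,\partial\a{\tau_k}$ equals $\pm c_{k_0}\neq 0$, contradicting $\partial R=0$. Therefore $R=0$, i.e.\ $S=T$.

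I expect the only slightly delicate point to be the routine construction of the simplicial representation of $R$ adapted to $E$, i.e.\ the choice of a finite vertex set of $E$ (containing $\supp(b)$, the branch points of $S$ and $T$, and all crossing points) along whose resulting edges $R$ has constant multiplicity; this is standard for polyhedral currents but requires some bookkeeping, after which the argument is a one-step induction on the number of edges. An alternative route avoids the explicit cycle argument: one checks that $S$ and $T$, being optimal with the same boundary and the same support, have the same topology in the sense of Definition~\ref{d:topo} — the essential vertices of an optimal current are precisely $\supp(b)$ together with the points of its support having at least three local branches (a degree-two point outside $\supp(b)$ cannot be a corner, by the argument in the proof of Lemma~\ref{l:numberBranch}), so these vertex sets agree for $S$ and $T$ — and then, since $\supp(S)$ and $\supp(T)$ contain no loops, Lemma~\ref{l:topo_and_support} yields $S=T$ directly.
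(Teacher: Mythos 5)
Your proof is correct and follows essentially the same route as the paper: the paper also considers $R=S-T\in\Po_1(K)\cap\I_1(K)$ with $\partial R=0$ and $\supp(R)\subset\supp(S)$, and concludes that if $R\neq 0$ then $\supp(S)$ would contain a loop, contradicting \cite[Proposition 7.8]{BCM}. The only difference is cosmetic: the paper cites the loop-producing argument from the proof of Lemma \ref{l:integral_minimizers}, whereas you spell out the same fact via the forest/degree-one-vertex argument.
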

\begin{proof}
Assume by contradiction $S\neq T$. By Lemma \ref{l:integral_minimizers}, $S-T\in\Po_1(K)\cap\I_1(K)$ is a nontrivial current with $\partial(S-T)=0$ and by assumption $\supp(S-T)\subset \supp(S)$. As in the proof of Lemma \ref{l:integral_minimizers} we deduce that $\supp(S-T)$ contains a loop. In particular, so does $\supp(S)$, which contradicts \cite[Proposition 7.8]{BCM}.
\end{proof}

\begin{lemma}\label{l:finiteminimizers}
    Let $b\in\I_0(K)$ be a boundary. Then $\OTP(b)$ is finite.
\end{lemma}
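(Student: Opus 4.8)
The plan is to show that $\OTP(b)$ contains only finitely many distinct topologies, and that within a fixed topology there is at most one minimizer; the statement then follows immediately. For the second part I would argue as follows. By Lemma \ref{l:integral_minimizers}, every $T\in\OTP(b)$ lies in $\Po_1(K)\cap\I_1(K)$, so by \cite[Proposition 7.8]{BCM} its support contains no loop. If $T,T'\in\OTP(b)$ have the same topology, then writing them as in \eqref{e:def_topo} with the same ordered vertex set and the same orientation on each segment, Lemma \ref{l:topo_and_support} forces $a_{ij}=a'_{ij}$ for all $i,j$, hence $T=T'$. Thus the map sending $T\in\OTP(b)$ to its topology is injective, and it remains to bound the number of topologies that occur.

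For the finiteness of topologies, first note that by Lemma \ref{l:numberBranch} every $T\in\OTP(b)$ has at most $\Haus^0(\supp(b))-2=:k$ branch points. A topology is determined by the combinatorial data $(M, \{a_{ij}\}_{i<j})$ up to relabeling, but this data involves the positions of the branch points, which a priori range over a continuum, so one cannot conclude finiteness by naive counting. The key point is therefore: the \emph{combinatorial type} of the topology — i.e. which of the $\binom{M}{2}$ coefficients $a_{ij}$ are nonzero, where $M\le \Haus^0(\supp(b))+k$ is bounded — takes only finitely many values (there are at most $2^{\binom{M}{2}}$ such graphs on the bounded vertex set), and I would argue that \emph{for each fixed combinatorial type, there is at most one minimizing network realizing it}. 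Indeed, once the combinatorial type (a graph $G$ on the vertices $\supp(b)\cup\BR(T)$, with prescribed boundary data) is fixed, the positions of the branch points are determined by minimizing the $\alpha$-mass, which amounts to minimizing a strictly convex-type functional (a sum of terms $|\theta_e|^\alpha \cdot |x_i-x_j|$) over the branch point locations with the multiplicities $\theta_e$ themselves determined by the boundary via the flow (divergence) constraints along the tree-like graph $G$ — since $\supp(T)$ contains no loops, fixing $G$ and $b$ determines all multiplicities $a_{ij}$ uniquely, so the only free parameters are the branch point positions, and the $\alpha$-mass is a strictly convex function of these in the relevant regime, yielding a unique minimizing configuration. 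Hence each combinatorial type contributes at most one element to $\OTP(b)$, and $\OTP(b)$ is finite.

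The main obstacle I anticipate is the uniqueness statement for a fixed combinatorial type: one must verify that, with the multiplicities pinned down by the acyclicity of $G$ and the boundary constraint, the $\alpha$-mass as a function of the branch point positions is genuinely strictly convex (or at least has a unique critical point that is the global minimum) on the region where the branch points are distinct from each other and from $\supp(b)$ — and one must also handle degenerations where branch points collide or merge into boundary points, which change the combinatorial type and are therefore absorbed into the finite enumeration. If a clean convexity argument is unavailable, an alternative is to invoke the improvement of Gilbert's uniqueness result advertised in the introduction (cf.\ the discussion preceding Lemma \ref{l:magic_points}), namely that a discrete optimal network with a prescribed Steiner topology is unique; combined with the bound on the number of branch points and the number of combinatorial types, this again gives finiteness of $\OTP(b)$.
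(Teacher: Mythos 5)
Your overall strategy (finitely many topologies, plus uniqueness of the minimizer within each fixed topology) is the same as the paper's, but the crucial step is not actually proved, and your first paragraph contains an invalid shortcut. Having the same topology in the sense of Definition \ref{d:topo} does \emph{not} give "the same ordered vertex set": only the points of $\supp(b)$ are pinned down, while the branch points of $T$ and $T'$ may sit at different locations. Lemma \ref{l:topo_and_support} therefore only yields $a_{ij}=a'_{ij}$; the conclusion "hence $T=T'$" does not follow, precisely because the segments $\sigma_{ij}$ and $\sigma'_{ij}$ can differ. You implicitly acknowledge this in your second paragraph, where you correctly identify the branch point positions as the remaining free parameters, but the argument you offer there — that the $\alpha$-mass is "strictly convex in the relevant regime" — is exactly the point you flag as the main obstacle and never establish. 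It is in fact false as stated: each term $|a_{ij}|^\alpha|x_j-x_i|$ is convex but not strictly convex (it is affine along collinear translations of $x_j$), so $F$ need not be strictly convex on any open set, and a clean global strict-convexity argument is unavailable. Your fallback, to "invoke the improvement of Gilbert's uniqueness result advertised in the introduction," is circular: that improvement \emph{is} the content of Lemmas \ref{l:topo_and_support} and \ref{l:finiteminimizers}; Gilbert's original statement concerns a different discrete model and is not quoted in the paper as an applicable external result.

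The paper closes this gap as follows. Suppose $T\neq T'$ are minimizers with the same topology; by Lemma \ref{l:topo_and_support} the coefficients agree, so some branch point satisfies $x_j\neq x'_j$. As in the proof of Lemma \ref{l:numberBranch}, minimality forces $x_j$ to be an endpoint of at least three segments of $\supp(T)$ which are not collinear, and this is what makes $F$ strictly convex in the $j$-th variable along the segment joining $(x_1,\dots,x_{M-n})$ to $(x'_1,\dots,x'_{M-n})$. Since $F$ takes the same (minimal) value at both configurations, strict convexity produces branch point positions $(y_1,\dots,y_{M-n})$ with $F(y)<F(x)$; the associated polyhedral current $S$ (which may well fail to have the same topology, but this is irrelevant) still satisfies $\partial S=b$ and $\MM(S)\leq F(y)<\MM(T)$, contradicting $T\in\OTP(b)$. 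Without this localized strict-convexity argument — or some substitute handling the possible degeneracies you mention (collinearity, collisions of branch points) — your proposal does not yield the uniqueness within a fixed topology, and hence does not yield finiteness.
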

\begin{proof}
By Lemma \ref{l:numberBranch} the range of the integer $M$ of Definition \ref{d:topo} among all $T\in\OTP(b)$ is finite. In turn this implies that the set of possible topologies of currents $T\in\OTP(b)$ is finite. Indeed the topology of a polyhedral current $T$ as in Definition \ref{d:topo}, up to choosing the order of the points $\{x_1,\dots, x_M\}$, is uniquely determined by the $M\times M$ matrix $A:=(|{\rm{sign}}(a_{ij})|)_{ij}$. Hence it is sufficient to prove that if $T$ and $T'$ are in $\OTP(b)$ and have the same topology, then $T=T'$, and by Lemma \ref{l:supports_determine_current} it suffices to prove that $\supp(T)=\supp(T')$.

By \cite[Proposition 7.8]{BCM} the support of $T$ and $T'$ does not contain loops, hence we can apply Lemma \ref{l:topo_and_support} and we can assume that $T$ and $T'$ can be written as in \eqref{e:def_topo} with $a_{ij}=a'_{ij}$ for every $i,j=1,\dots, M$. This means that the set of competitors for the branched transportation problem with boundary $b$ and a given topology can be reduced to a family of polyhedral currents $T\in\Po_1(K)$ whose only unknown is the position of the points $\{x_1,\dots,x_M\}\setminus \supp(b)$. Accordingly, we denote $n:=\Haus^0(\supp(b))$ and we order the points $\{x_1,\dots, x_M\}$ in such a way that $\BR(T)=\{x_1,\dots,x_{M-n}\}$.
The $\alpha$-mass of such $T$ is computed as 
$$\MM(T)=\sum_{i<j}|a_{ij}|^\alpha\Haus^1(\sigma_{ij})$$
and by the previous discussion, since the vector $(x_{M-n+1},\dots, x_M)$ is fixed, this is a functional of the vector $(x_1,\dots,x_{M-n})$ only, which can be written as
\begin{equation}\label{e:convex}
    \MM(T)=F(x_1,\dots,x_{M-n}):=\sum_{i<j}|a_{ij}|^\alpha|x_j-x_i|=C+\sum_{i=1}^{M-n}\sum_{j=i+1}^M |a_{ij}|^\alpha|x_j-x_i|,
\end{equation}
where $C= \sum_{i=M-n+1}^{M}\sum_{i<j}|a_{ij}|^\alpha|x_j-x_i|$.
One can immediately see that $F$ is convex, being a sum of convex functions. Moreover each term $|a_{ij}|^\alpha|x_j-x_i|$ in \eqref{e:convex}, as a function of the variable $x_j$, is strictly convex on a segment $[s,t]$ whenever $x_i$, $s$ and $t$ are not collinear. 

Assume by contradiction that $T\neq T'\in\OTP(b)$ have the same topology and consider the corresponding sets $$\BR(T)=\{x_1,\dots,x_{M-n}\},\quad \BR(T')=\{x'_1,\dots,x'_{M-n}\}.$$
By Lemma \ref{l:topo_and_support} there exists $j\in\{1,\dots,M-n\}$ such that $x_j\neq x'_j$. As in the proof of Lemma \ref{l:numberBranch} we infer that $x_j$ is an endpoint of at least three segments in the support of $T$ which are not collinear. We deduce by the discussion after \eqref{e:convex} that the function $F$ is strictly convex in the $j$-th variable. Since $F(x_1,\dots,x_{M-n})=F(x'_1,\dots,x'_{M-n})$ we deduce that there exists a point $(y_1,\dots,y_{M-n})$ with \begin{equation}\label{e:bettery}
    F(y_1,\dots,y_{M-n})<F(x_1,\dots,x_{M-n}).
\end{equation}
Denote 
$$
z_{i}:=
\begin{cases}
y_i \quad \mbox{if $i\leq M-n$}\\
x_i \quad \mbox{otherwise}
\end{cases}
$$
and let $S$ be the current
\begin{equation}\label{e:S}
    S:=\sum_{i<j}a_{ij}\llbracket\tilde\sigma_{ij}\rrbracket,
\end{equation}
where $\tilde\sigma_{ij}$ is the segment with first endpoint $z_i$ and second endpoint $z_j$. Notice that in principle it might happen that $S$ does not have the same topology as $T$ and $T'$, since \eqref{e:S} might fail to have property (iii) of Definition \ref{d:topo}. However we have $\partial S=b$ and by \eqref{e:bettery} 
$$\MM(S)\leq F(y_1,\dots,y_{M-n})<F(x_1,\dots,x_{M-n})=\MM(T),$$
which contradicts the assumption $T\in\OTP(b)$. 
\end{proof}

%Note that, in general, given a topology, $\OTP(b)$ may be empty, as shown in \cite{Gilbert}.

\begin{lemma}\label{l:magic_points}
    For every boundary $b\in\I_{0}(K)$ and $T \in \OTP(b)$, there is a set of distinct points $\{p_1, \dots, p_h\} \subset \supp(T)\setminus(\BR(T)\cup\supp (b))$ such that 
    \[ \big\{ S \in \OTP(b): \{ p_1, \dots, p_h \} \subset \supp(S)\big\} = \{T\}\,. \]
    Moreover, the $p_i$'s can be chosen so that if $p_i\in\supp(T)\cap\supp(S)$ for some $ S\in\OTP(b)$, then there exists $\rho>0$ such that $\supp(T)\cap B_\rho(p_i)=\supp(S)\cap B_\rho(p_i)$.
\end{lemma}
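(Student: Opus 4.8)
The plan is to exploit the finiteness of $\OTP(b)$ (Lemma~\ref{l:finiteminimizers}) and to select, for each minimizer $S\neq T$, one point $p_i$ lying in the regular part of $\supp(T)$ and avoiding a suitable finite exceptional set. Write $\OTP(b)=\{T\}\cup\{S_1,\dots,S_k\}$ with the $S_j$ pairwise distinct and distinct from $T$; if $k=0$ the conclusion holds with the empty set, so assume $k\geq 1$. By Lemma~\ref{l:integral_minimizers} all these currents are integer polyhedral, and by Definition~\ref{d:topo} the vertices of the (minimal) polyhedral representation of $T$ lie in $\BR(T)\cup\supp(b)$. Hence every $p\in\supp(T)\setminus(\BR(T)\cup\supp(b))$ lies in the relative interior of a single edge of $T$, and $\supp(T)$ coincides with that edge on a neighbourhood of $p$; I call such points \emph{regular}.

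I would then record two facts. First, $\supp(T)\not\subseteq\supp(S_j)$ for every $j$: otherwise $T-S_j$ would be a nonzero integer polyhedral $1$-cycle supported in $\supp(S_j)$, and, arguing as in the proof of Lemma~\ref{l:integral_minimizers} (each vertex of such a cycle is an endpoint of at least two of its segments, so its support contains a loop), one would get a loop inside $\supp(S_j)$, contradicting \cite[Proposition~7.8]{BCM}. Since $\supp(T)$ is a finite union of nondegenerate segments it has no isolated points, hence $\supp(T)\setminus\supp(S_j)$ is a nonempty relatively open, thus infinite, subset of $\supp(T)$. Second, for each $j$ the set $\mathcal{B}_j$ of regular points $p\in\supp(S_j)$ such that $\supp(T)\cap B_\rho(p)\neq\supp(S_j)\cap B_\rho(p)$ for all $\rho>0$ is finite: if such a $p$ is not one of the finitely many vertices of $S_j$, it lies in the relative interior of a unique edge $\tau$ of $S_j$, so near $p$ the two supports are the edge of $T$ through $p$ and $\tau$, and these coincide for small $\rho$ unless the two edges are not collinear, in which case $p$ is their unique intersection point; since there are finitely many pairs of edges, $\mathcal{B}_j$ is finite. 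Put $\mathcal{B}:=\bigcup_{j=1}^{k}\mathcal{B}_j$.

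Finally I would pick, inductively for $j=1,\dots,k$,
\[
p_j\in\supp(T)\setminus\bigl(\supp(S_j)\cup\BR(T)\cup\supp(b)\cup\mathcal{B}\cup\{p_1,\dots,p_{j-1}\}\bigr),
\]
which is possible because $\supp(T)\setminus\supp(S_j)$ is infinite while only finitely many further points are removed. The $p_j$ are then distinct regular points of $\supp(T)$. If $S\in\OTP(b)$ has $\{p_1,\dots,p_k\}\subset\supp(S)$, then $S\neq S_j$ for every $j$ (as $p_j\notin\supp(S_j)$), so $S=T$; this is the first assertion, with $h=k$. For the ``moreover'' part, if $p_i\in\supp(S)$ for some $S\in\OTP(b)$, then either $S=T$ and the supports coincide near $p_i$ trivially, or $S=S_j$ for some $j$, and since $p_i$ is a regular point not belonging to $\mathcal{B}_j$, the definition of $\mathcal{B}_j$ provides $\rho>0$ with $\supp(T)\cap B_\rho(p_i)=\supp(S_j)\cap B_\rho(p_i)$. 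The hard part will be the finiteness of $\mathcal{B}_j$, which rests essentially on the polyhedrality of optimal transport paths; the loop argument showing $\supp(T)\not\subseteq\supp(S_j)$ is what guarantees enough room on $\supp(T)$ to place the points $p_i$.
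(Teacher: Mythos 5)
Your proof is correct and follows essentially the same strategy as the paper: by finiteness of $\OTP(b)$ you pick, for each competing minimizer, a point of $\supp(T)$ outside its support and outside a finite exceptional set (vertices and non-collinear crossing points), which is exactly the paper's choice of $p_i\in U_i\setminus V_i$. Your explicit loop argument showing $\supp(T)\not\subseteq\supp(S_j)$ is a welcome detail that the paper subsumes in its appeal to Lemma \ref{l:supports_determine_current}, but it is the same underlying idea, not a different route.
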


\begin{proof}
By Lemma \ref{l:finiteminimizers} we have that $\OTP(b)$ consists of finitely many polyhedral currents $ T^1,\dots, T^h$ and, by Lemma \ref{l:supports_determine_current}, the symmetric difference $\supp(T^i)\triangle \,\supp(T^j)$ is a relatively open set of positive length for every $i\neq j$. Up to reordering, we assume $T^1=T$ and for every $i \in \{2, \dots, h\}$ we consider the set $U_i:= \supp(T) \setminus (\supp(T^i) \cup \BR(T))$. We observe, recalling that $\BR(T)$ is finite by Lemma \ref{l:numberBranch}, that each $U_i$ is relatively open with positive length. 
Define the subset
\begin{align*}
    V_i := U_i \cap  \Big( \bigcup_{j \neq i} \BR(T^j) \cup \left\{p \in U_i: \text{$\supp(T^j)$ intersects $U_i$ transversally at $p$} \right\} \Big)
\end{align*}
and observe that $V_i$ is finite since every $T^j$ is polyhedral. Then choose $p_i \in U_i \setminus V_i$. Clearly $p_i\in \supp(T)\setminus(\BR(T)\cup \supp(b))$ and $p_i\not\in\supp(T^i)$; moreover if $p_i \in \supp(T^j)$ then locally $\supp(T^j)$ agrees with $\supp(T)$.
%By Lemma \ref{l:finiteminimizers}, $\OTP(b'')$ is finite. \\
%   As their support has to be distinct, there are finitely many points $Q_1, \dots, Q_N$ lying in the support of only one $T \in \OTP(b'')$. In fact, we can choose a small radius $r>0$ such that
  %  \[ \bigcup_{i=1}^N B_r(Q_i) \cap \supp(S) = \emptyset \qquad \text{ for any } S \in \OTP(b'')\setminus \{T\} \,. \]
   % Thus, if $Q_j$ is a branch point of $T$, we replace it by a point $P_j \in B_r(Q_j)$ which is not a branch point. Thus, none of the $P_j$ are branch point of $T$ and they identify $T$ uniquely.
\end{proof}

\section{Density of boundaries with unique minimizers: perturbation argument}
\subsection{Construction of the perturbed boundaries}
Let us fix a boundary $b\in\I_0(K)$, an integer polyhedral current $T\in\OTP(b)$, see Lemma \ref{l:integral_minimizers}, and points $\{p_1,\dots,p_h\}$ as in Lemma \ref{l:magic_points}. For a fixed $k\in\N\setminus\{0\}$ and for $n=1,2,\dots$ we denote 
\begin{equation}\label{e:bienne}
\begin{split}
    T_n &:=T- \frac1k \sum_{i=1}^h T\trace B_{n^{-1}}(p_i)\, ,\\
    b_n &:=\partial T_n\,.
\end{split}
\end{equation}
Observe that by \cite[Proposition 3.6]{CDRMcalcvar}, the multiplicity of $T$ is bounded from above by $2^{-1}\Mass(b)$ and moreover, for $n$ sufficiently large, the closed balls $\overline B_{n^{-1}}(p_i)$ are disjoint and do not intersect $\supp(b)\cup\BR(T)$, so that we have
\begin{equation}\label{e:massa_bienne}
    \Mass(b_n)=\Mass(b)+k^{-1}\sum_{i=1}^h\Mass(\partial(T\trace B_{n^{-1}}(p_i)))\leq\Mass(b)+hk^{-1}\Mass(b)
\end{equation}
and \begin{equation}\label{e:convergence_bienne}
    \Flat_K(b_n-b)\leq k^{-1}\sum_{i=1}^h\Mass(T\trace B_{n^{-1}}(p_i))\leq h(nk)^{-1}\Mass(b).
\end{equation}
For every $n$, we choose $S_n\in\OTP(b_n)$ and we apply Lemma \ref{l:integral_minimizers} to the boundaries $kb_n$ to deduce that $kS_n\in\Po_1(K)\cap\I_1(K)$. By \eqref{e:bienne} we have 
\begin{equation}\label{e:alphamass_tienne}
\MM(S_n)\leq\MM(T_n)<\MM(T). \end{equation}
The aim of this section is to prove the following:
\begin{proposition}\label{p:unique_bienne}
There exists $k_0=k_0(\alpha)$ such that for $(b_n)_{n}$ as in \eqref{e:bienne} with $k\geq k_0$ and for $n$ sufficiently large, $\OTP(b_n)=\{T_n\}$. 
\end{proposition}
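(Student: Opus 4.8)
The plan is to show that for $k$ large the perturbed boundary $b_n$ admits a unique minimizer by carefully analyzing the structure of any $S_n \in \OTP(b_n)$ near the small balls $\overline B_{n^{-1}}(p_i)$. First I would set up the limiting picture: fix $S_n \in \OTP(b_n)$ and use the modified stability result (Theorem~\ref{t:stability_new}) together with \eqref{e:alphamass_tienne} and \eqref{e:convergence_bienne} to extract, along a subsequence, a current $S \in \OTP(b)$ with $\Flat_K(S_n - S)\to 0$; simultaneously, since each $b_n$ has an atom of mass $k^{-1}\theta_i$ close to $p_i$ (where $\theta_i$ is the multiplicity of $T$ along $\supp(T)$ at $p_i$), every $S_n$ must contain a point near $p_i$ in its support, and one obtains Hausdorff convergence of $\supp(S_n)$ to $\supp(S) \cup \{p_1,\dots,p_h\}$ (Lemma~\ref{l:conv_hauss}). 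The danger, flagged in the strategy section, is that the piece of $S_n$ near some $p_i$ could shrink to a point in the limit, so that $p_i \notin \supp(S)$ and we cannot yet conclude $S = T$ from Lemma~\ref{l:magic_points}.

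The heart of the argument is therefore a local analysis near each $p_i$, carried out on the rescaled currents $n\big(kS_n - k\,T\big)$ restricted to a ball of fixed radius around $p_i$. Since $kS_n \in \Po_1(K)\cap\I_1(K)$ and $p_i$ is a regular point of $\supp(T)$ with $\overline B_{n^{-1}}(p_i)$ avoiding $\BR(T)\cup\supp(b)$, the boundary $kb_n$ inside a neighborhood of $p_i$ consists of an incoming segment of $kT$ of multiplicity $k\theta_i$ and two atoms of multiplicity $\theta_i$ on the sphere $\partial B_{n^{-1}}(p_i)$ (the two endpoints where $T$ enters and exits the ball), the missing piece being the arc of $T$ of multiplicity $k^{-1}\theta_i \cdot k = \theta_i$... more precisely the current $\tfrac1k T\trace B_{n^{-1}}(p_i)$ removed has multiplicity $\theta_i/k$. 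The competitor $T_n$ itself just leaves a gap, and one compares the $\alpha$-mass cost of $S_n$ against $T_n$ locally. By a blow-up/compactness argument the local configuration of $S_n$ converges to a solution of a finite-dimensional branched transportation problem (a Gilbert–Steiner type problem) with the prescribed boundary data; the key quantitative input is that for $k \geq k_0(\alpha)$ the subadditivity gain from merging flows of size $\sim 1$ with a flow of size $\sim 1/k$ is too small to beat the cost of creating extra length, which rules out every local topology except the two listed in \eqref{e:THE_ONE}: namely, $S_n$ either reproduces the arc of $T$ through $B_{n^{-1}}(p_i)$ with full multiplicity, or does so with the reduced multiplicity matching $T_n$.

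Having excluded all but these two topologies at every $p_i$, I would argue that in both admissible cases $p_i \in \supp(S)$: the surviving local branch of $S_n$ near $p_i$ has length bounded below independently of $n$ (it must connect the two antipodal-ish boundary atoms on $\partial B_{n^{-1}}(p_i)$ through a configuration that, after rescaling by $n$, is nondegenerate), so it does not vanish in the flat limit and forces $p_i \in \supp(S)$. Then Lemma~\ref{l:magic_points} gives $S = T$, and moreover the second clause of Lemma~\ref{l:magic_points} pins down $\supp(S_n)$ near each $p_i$ to agree with $\supp(T)$, which combined with the topological rigidity forces $S_n = T_n$; since $S_n \in \OTP(b_n)$ was arbitrary, $\OTP(b_n) = \{T_n\}$ for $n$ large. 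The main obstacle is the local blow-up analysis and the explicit threshold $k_0(\alpha)$: one must control uniformly, over all minimizers $S_n$ and all $n$, the possible branchings inside $B_{n^{-1}}(p_i)$, and show that the only ways to be cheaper than $T_n$ are the two claimed ones — this is where the precise strict-subadditivity estimate $(a+b)^\alpha < a^\alpha + b^\alpha$ for small $b/a$, combined with lower bounds on the length of any connecting network, must be made quantitative.
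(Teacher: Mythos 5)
Your setup (stability plus Hausdorff convergence of supports) and your local classification into the two configurations of \eqref{e:THE_ONE} follow the paper's route, but the concluding step has a genuine gap. First, the classification of the local topologies is only available near points $p_i\in\supp(S)$: it is derived under the assumption that, in a fixed box around $p_i$, the limit $S$ is a straight segment of multiplicity $\theta$, which is exactly what allows one to reduce to a four\--point boundary $A_j,B_j,C_j,D_j$. You then invoke this classification to \emph{deduce} $p_i\in\supp(S)$, which is circular. Moreover, the claimed length lower bound ``independent of $n$'' is false in the dangerous scenario: if $p_i\notin\supp(S)$, the piece of $S_{n_j}$ near $p_i$ only has to bound the two atoms of $b_{n_j}$ at mutual distance $2n_j^{-1}$, and the reversed arc $-\tfrac1k\,T\trace B_{n_j^{-1}}(p_i)$ of length $2n_j^{-1}$ is a perfectly admissible (indeed locally optimal, see \eqref{e:snoutsidesupport}) configuration whose support vanishes in the limit. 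Nothing local rules this out; it is excluded in the paper only by a global comparison, namely estimating $\MM\bigl(S_{n_j}+\tfrac1k\sum_{i=1}^h T\trace B_{n_j^{-1}}(p_i)\bigr)$ against $\MM(T)$ and using that this competitor has boundary $b$ while $T\in\OTP(b)$: equality is forced, and strictness would occur as soon as some $p_i\notin\supp(S)$ \emph{or} some local configuration equals $Z_j$. This energy bookkeeping is absent from your proposal.

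Second, even granting $S=T$, your passage to $S_{n_j}=T_{n_j}$ does not work as stated. The second clause of Lemma \ref{l:magic_points} concerns elements of $\OTP(b)$, not of $\OTP(b_{n_j})$, so it says nothing about $\supp(S_{n_j})$ near $p_i$; and support information cannot distinguish $W_j$ from $Z_j$ anyway, since both supports converge in Hausdorff distance to the same segment through $p_i$. The exclusion of $Z_j$ (which is what forces $S_{n_j}+\tfrac1k\sum_i T\trace B_{n_j^{-1}}(p_i)=T$, hence $S_{n_j}=T_{n_j}$ via \eqref{e:bienne}) again comes only from the same global $\alpha$-mass comparison, not from ``topological rigidity.'' A minor further caveat: performing the local analysis on blow\--ups at scale $n^{-1}$ of $kS_n-kT$ risks missing deviations of $S_n$ from the segment occurring at intermediate scales between $n^{-1}$ and $1$; the paper avoids this by slicing in a box of fixed size, pinning the slices at distance $O(1)$ from $p_i$ to single atoms of multiplicity $\theta$ before classifying the four\--point problem.
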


In the next lemma, for any set $A$ and $\rho>0$ we denote $B_\rho(A):=\bigcup_{a\in A}B_\rho(a)$.
\begin{lemma}\label{l:conv_hauss}
    For $n \in \N \setminus \{0\}$ let $b_n$ be as in \eqref{e:bienne} and $S_n\in\OTP(b_n)$. For every subsequence $(S_{n_j})_{j\in\N}$ and current $S$ such that $\Flat_K(S_{n_j}-S)\to 0$ as $j\to\infty$ we have $S\in\OTP(b)$ and moreover for every $\rho>0$ we have $\supp(S_{n_j})\subset B_\rho(\supp(S)\cup\{p_1,\dots,p_h\})$, for $j$ sufficiently large.
\end{lemma}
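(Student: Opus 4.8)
The plan is to combine the stability theorem (Theorem \ref{t:stability_new}) with a soft compactness/lower-semicontinuity argument, and then upgrade flat convergence of the minimizers to an inclusion of supports using the structure of optimal transport paths. First I would establish that $S\in\OTP(b)$. We know $\Flat_K(b_{n_j}-b)\to 0$ by \eqref{e:convergence_bienne}, and $\partial S_{n_j}=b_{n_j}$, so by continuity of the boundary operator $\partial S = b$; moreover $\MM(S_{n_j})\le\MM(T)$ by \eqref{e:alphamass_tienne}, so the masses $\Mass(S_{n_j})$ are uniformly bounded (by the bound $\Mass\le C^{1-\alpha}\MM$ used in Lemma \ref{l:closed}), and $S\in\Nor_1(K)$. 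Lower semicontinuity of the $\alpha$-mass gives $\MM(S)\le\liminf_j\MM(S_{n_j})\le\liminf_j\MM(T_{n_j})$. Since $\Mass(T\trace B_{n_j^{-1}}(p_i))\to 0$, we have $\MM(T_{n_j})\to\MM(T)=\E^\alpha(b)$ — actually one should just invoke Theorem \ref{t:stability_new} directly: $b_{n_j}\to b$ in the flat norm with uniformly bounded $\alpha$-mass forces any flat limit of $S_{n_j}\in\OTP(b_{n_j})$ to lie in $\OTP(b)$, which is exactly the content of that stability statement. So $S\in\OTP(b)$.

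The second assertion is the heart of the lemma: for every $\rho>0$, $\supp(S_{n_j})\subset B_\rho(\supp(S)\cup\{p_1,\dots,p_h\})$ for $j$ large. I would argue by contradiction: suppose not, so along a further subsequence there are points $q_j\in\supp(S_{n_j})$ with $\dist(q_j,\supp(S)\cup\{p_1,\dots,p_h\})\ge\rho$, and (by compactness of $K$) $q_j\to q$ with $\dist(q,\supp(S)\cup\{p_1,\dots,p_h\})\ge\rho$. The key tool is the interior regularity / density lower bound for optimal transport paths: near a point of $\supp(S_{n_j})$ away from $\supp(b_{n_j})\cup\BR(S_{n_j})$, the current is (a multiple of) a segment, and more importantly there is a uniform lower mass-density bound — for an optimal transport path the mass in a ball $B_r(x)$ centered at $x\in\supp(S_{n_j})$ with $B_r(x)\cap\supp(\partial S_{n_j})=\emptyset$ is at least $c\,r$ for a constant $c=c(d,\alpha,C)$ depending only on the uniform $\alpha$-mass bound (this follows from the good-decomposition / single-path structure, cf. \cite[Proposition 3.6]{CDRMcalcvar}, \cite[Proposition 7.4]{BCM}, or the standard cone-comparison argument). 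Choosing $r=\rho/4$, for $j$ large the ball $B_r(q_j)$ avoids $\supp(b_{n_j})$ (which is contained in $B_\rho(\supp(b))\cup\bigcup_i B_\rho(p_i)$ for $j$ large, as $b_{n_j}\to b$ and the perturbation sits near the $p_i$) and also $\supp(S)$; hence $\Mass(S_{n_j}\trace B_r(q_j))\ge c r$. But $\Flat_K(S_{n_j}-S)\to 0$ together with the uniform $\alpha$-mass bound implies $\Mass(S_{n_j}\trace B_r(q_j))\to\Mass(S\trace B_r(q))=0$ (using that in the ball $B_r(q)$, which is disjoint from $\supp(S)$, the limit has no mass, plus lower semicontinuity upgraded to local convergence of mass for minimizers — alternatively one can localize the flat convergence and use that $S_{n_j}-S$ has boundary $b_{n_j}-b\to 0$, so flat convergence gives $\Mass((S_{n_j}-S)\res B_r(q))\to 0$ for a.e. $r$ after a standard slicing argument). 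This contradiction proves the inclusion.

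The main obstacle I anticipate is making the density lower bound and the localization of flat convergence fully rigorous and uniform in $n$: one needs a mass-density estimate for optimal transport paths that is uniform over the family $\{S_{n_j}\}$, which requires the uniform bound $\MM(S_{n_j})\le\MM(T)$ to control the multiplicities from below (the density bound degenerates as $\alpha\to 1$ or as multiplicities $\to 0$, but here they are bounded below by a positive constant since $kS_{n_j}$ is integer polyhedral, so multiplicities are in $k^{-1}\Z_{>0}$). Conversely, turning flat convergence plus a boundary-mass bound into convergence of mass restricted to balls is routine but needs care near the exceptional radii; I would handle it by slicing $S_{n_j}-S$ by spheres $\partial B_r(q)$, choosing good radii $r\in(\rho/8,\rho/4)$ with controlled slice mass, and using $\Flat_K(S_{n_j}-S)\to 0$ together with $\Flat_K(b_{n_j}-b)\to 0$ to bound $\Mass((S_{n_j}-S)\res B_r(q))$. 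Once these two uniform estimates are in place, the contradiction argument closes immediately and also yields the Hausdorff-type convergence of supports claimed in the statement.
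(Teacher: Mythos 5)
Your overall architecture matches the paper's: part one via Theorem \ref{t:stability_new}, and part two by contradiction, producing a stray point $q_j\in\supp(S_{n_j})$ far from $\supp(S)\cup\{p_1,\dots,p_h\}$, noting that $S_{n_j}$ has no boundary in $B_r(q_j)$, and using the single--path/connectedness structure together with the fact that $kS_{n_j}$ is integer polyhedral to get a curve of multiplicity at least $k^{-1}$ from $q_j$ to $\partial B_r(q_j)$, hence a uniform lower bound on the ($\alpha$-)mass of $S_{n_j}$ in $B_r(q_j)$. That part is sound and is essentially the paper's argument (the paper derives it from \cite[Proposition 7.4]{BCM} and the slicing lemma \cite[Lemma 28.5]{SimonLN} rather than a generic density bound, but the mechanism is the same).

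The gap is in the step where you claim $\Mass(S_{n_j}\trace B_r(q_j))\to 0$. The justification you actually spell out --- ``localize the flat convergence and use that $S_{n_j}-S$ has boundary $b_{n_j}-b\to 0$, so flat convergence gives $\Mass((S_{n_j}-S)\res B_r(q))\to 0$ for a.e.\ $r$ after a standard slicing argument'' --- is false as a general principle: localized mass is only lower semicontinuous under flat convergence, never upper semicontinuous. If $S_{n_j}-S=\partial R_j+Q_j$ with $\Mass(R_j)+\Mass(Q_j)\to 0$, then restricting to $B_r(q)$ produces the term $\partial\bigl(R_j\res B_r(q)\bigr)$, whose mass is not controlled by $\Mass(R_j)$ even after choosing good radii; concretely, a thin ``hairpin'' (two nearly overlapping segments of fixed length and multiplicity with opposite orientations) has arbitrarily small flat norm and small boundary flat norm, yet its mass in the ball stays bounded away from zero. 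What actually rules out such configurations is minimality, not flat convergence: this is exactly how the paper closes the argument. Since $\MM(S_{n_j})\leq\MM(T_{n_j})<\MM(T)=\MM(S)$ by \eqref{e:alphamass_tienne}, and since the localized lower semicontinuity of the $\alpha$-mass on open sets (\cite[Proposition 2.6]{CDRMS}) gives $\liminf_j\MM\bigl(S_{n_j}\trace B_r(\supp(S)\cup\{p_1,\dots,p_h\})\bigr)\geq\MM(S)$, there is asymptotically no $\alpha$-mass available in the disjoint ball $B_r(q_j)$, which contradicts the lower bound $\MM(S_{n_j}\trace B_r(q_j))\geq rk^{-\alpha}$ directly --- no localization of the flat convergence is needed, and the comparison is cleanest in $\alpha$-mass rather than mass. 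Your parenthetical ``lower semicontinuity upgraded to local convergence of mass for minimizers'' points in this direction, but it is precisely the statement that needs proof, and as written your proposal leaves it unproved while the detailed alternative you offer would not work.
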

\begin{proof}
The first part of the proposition is a direct consequence of Theorem \ref{t:stability_new}. 
Towards a proof by contradiction of the second part, assume that there exists $r>0$ and, for every $j$, a point 
\begin{equation}\label{e:quenne}
    q_j\in\supp(S_{n_j})\setminus B_{2r}(\supp(S)\cup\{p_1,\dots,p_h\}).
\end{equation} By \cite[Proposition 2.6]{CDRMS} we have 
\begin{equation}\label{e:mass_in}
    \liminf_j\MM(S_{n_j}\trace B_{r}(\supp(S)\cup\{p_1,\dots,p_h\}))\geq\MM(S).
\end{equation}
On the other hand, by \eqref{e:bienne} and \eqref{e:quenne} the current $S_{n_j}$ has no boundary in $B_r(q_j)$, for $j$ sufficiently large. Moreover, by \cite[Proposition 7.4]{BCM} the restriction $R_j$ of $S_{n_j}\trace \,B_r(q_j)$ to the connected component of its support containing $q_j$ has non-trivial boundary, and more precisely applying \cite[Lemma 28.5]{SimonLN} with $f(x)=|x-q_j|$ we deduce that $\emptyset\neq\supp(\partial R_j)\subset \partial B_r(q_j)$. We conclude that $\supp (R_j)$ contains a path connecting $q_j$ to a point of $\partial B_r(q_j)$. By Lemma \ref{l:integral_minimizers} such path has multiplicity bounded from below by $k^{-1}$. This allows to conclude that 
\begin{equation}\label{e:mass_out}
\MM(S_{n_j}\trace B_r(q_j))\geq rk^{-\alpha}    
\end{equation}
Combining \eqref{e:quenne},\eqref{e:mass_in}, and \eqref{e:mass_out}, we conclude
$$ \liminf_j\MM(S_{n_j})\geq\MM(S)+rk^{-\alpha}=\MM(T)+rk^{-\alpha},$$
which contradicts \eqref{e:alphamass_tienne}.
\end{proof}

We dedicate the rest of this section to prove the following:

\begin{lemma}\label{l:main}
    There exists $k_0=k_0(\alpha)$ with the following property. Let $T$ and $(T_n)_n$ be as in \eqref{e:bienne} with $k\geq k_0$ and let $S$ and $(S_{n_j})_j$ be as in Lemma \ref{l:conv_hauss}.  Then $S_{n_j}=T_{n_j}$, for $j$  sufficiently large and in particular  $S=T$.
\end{lemma}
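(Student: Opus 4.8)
The plan is to combine the Hausdorff-type convergence already available from Lemma \ref{l:conv_hauss} with a careful local analysis of $S_{n_j}$ near each of the ``magic points'' $p_i$. First I would record what Lemma \ref{l:conv_hauss} gives: up to subsequence $S_{n_j}\to S\in\OTP(b)$ in the flat norm, and $\supp(S_{n_j})\subset B_\rho(\supp(S)\cup\{p_1,\dots,p_h\})$ for $j$ large and every fixed $\rho>0$. By construction \eqref{e:bienne}, the boundary $b_{n_j}$ contains, near every $p_i$, two small antipodal atoms of multiplicity $k^{-1}\theta$ (where $\theta$ is the local multiplicity of $T$ at $p_i$), located on $\partial B_{n_j^{-1}}(p_i)$ along the line $\supp(T)$ passes through. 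Since $S_{n_j}\in\OTP(b_{n_j})$ has $\partial S_{n_j}=b_{n_j}$, the support of $S_{n_j}$ must contain points arbitrarily close to each $p_i$; hence in the limit every $p_i$ lies in $\supp(S)\cup\{p_1,\dots,p_h\}$, which is not yet enough — we still need $p_i\in\supp(S)$, which is exactly what Lemma \ref{l:magic_points} needs in order to conclude $S=T$.

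The heart of the argument is the local topological analysis near a fixed $p_i$, carried out at scale $n_j^{-1}$ (this is the content of \S\ref{Step2} referenced in the strategy). I would blow up: rescale $S_{n_j}$ around $p_i$ by the factor $n_j$, so that the two boundary atoms sit at fixed antipodal points $\pm e$ on the unit sphere with multiplicity $k^{-1}\theta$, while the ``outer'' part of $S_{n_j}$ enters the picture through a prescribed flux on $\partial B_R$ for $R$ large, matching the flux of $T$ (a single line of multiplicity $\theta$ through $p_i$) up to a vanishing error. The rescaled currents are $\alpha$-mass minimizers for their own boundary, so one can enumerate the finitely many combinatorial possibilities for how the network connects $\{+e,-e\}$ to the outer data, and compare their $\alpha$-masses. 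The key calorimetric fact is that splitting off a branch of multiplicity $k^{-1}\theta$ and routing it separately costs, by subadditivity of $t\mapsto t^\alpha$ being \emph{strict}, strictly more than $k^{-\alpha}$ times the length times $\theta^\alpha$ saved elsewhere, unless $k$ is large enough that it is cheaper \emph{not} to reconnect — this is where the threshold $k_0=k_0(\alpha)$ enters. Running the comparison shows that exactly two local topologies survive, recorded in \eqref{e:THE_ONE}: either $S_{n_j}$ near $p_i$ looks like $T$ with a small segment of multiplicity $\theta$ removed (i.e. $S_{n_j}=T_{n_j}$ locally), or a topology which one rules out by the optimality/acyclicity constraints from \cite[Prop.~7.4 and 7.8]{BCM}. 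In particular the surviving topology forces $p_i\in\supp(S_{n_j})$ in a neighborhood matching $\supp(T)$, hence $p_i\in\supp(S)$ by the last clause of Lemma \ref{l:magic_points}.

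Once all $p_i\in\supp(S)$, Lemma \ref{l:magic_points} gives $S=T$ immediately. It then remains to upgrade ``locally near each $p_i$, $S_{n_j}$ agrees with $T_{n_j}$'' to the global identity $S_{n_j}=T_{n_j}$: outside $\bigcup_i B_{n_j^{-1}}(p_i)$ both currents have the same boundary $b$ and their supports coincide in a neighborhood of each $p_i$; gluing, $S_{n_j}$ and $T_{n_j}$ are both in $\OTP(b_{n_j})$ with the same support, so $S_{n_j}=T_{n_j}$ by Lemma \ref{l:supports_determine_current} (applied to the integral boundary $kb_{n_j}$, using $kS_{n_j},kT_{n_j}\in\I_1(K)$ from Lemma \ref{l:integral_minimizers}). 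The main obstacle is the local analysis: one must control \emph{all} possible minimizing local configurations of $S_{n_j}$ near $p_i$ — including the a priori possibility that the small detour created by the perturbation is cheaper to ``reabsorb'' in some exotic way, or that branch points of $S_{n_j}$ accumulate at $p_i$ — and this is precisely why the restriction $k\geq k_0(\alpha)$ and the delicate case analysis of \S\ref{Step2} are needed. Everything else (the compactness, the boundary-atom bookkeeping, the final gluing) is routine given the earlier lemmas.
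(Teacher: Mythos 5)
Your outline of the local analysis (slicing to reduce to a four-point boundary near each $p_i\in\supp(S)$, enumerating the possible topologies, and comparing $\alpha$-masses with a threshold $k_0(\alpha)$ coming from strict subadditivity of $t\mapsto t^\alpha$) matches the paper's \S\ref{Step1}--\S\ref{Step2}. But there is a genuine gap at the decisive point. The two topologies surviving the local comparison are $W_j$ (the ``$T_{n_j}$-like'' one) \emph{and} $Z_j$ in \eqref{e:THE_ONE}, and $Z_j$ cannot be discarded by ``optimality/acyclicity constraints from \cite[Proposition 7.4 and 7.8]{BCM}'' as you claim: $Z_j$ is loop-free, and since $A_j,D_j$ are only known to lie in a $\rho$-tube around the axis while $|B_jC_j|\sim n_j^{-1}\to 0$, it can be genuinely cheaper than $W_j$ for suitable positions of the four points, so no purely local argument rules it out. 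Likewise, your step ``$\supp(S_{n_j})$ contains points arbitrarily close to $p_i$, and the surviving topology forces $p_i\in\supp(S)$'' does not work: when $p_i\notin\supp(S)$ the piece of $S_{n_j}$ near $p_i$ is exactly the reversed tiny segment $-\frac1k T\trace B_{n_j^{-1}}(p_i)$, which vanishes in the flat limit, so the presence of boundary atoms near $p_i$ gives no information about $\supp(S)$; moreover the last clause of Lemma \ref{l:magic_points} presupposes $p_i\in\supp(T)\cap\supp(S)$ and cannot be used to establish $p_i\in\supp(S)$.

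What is missing is the paper's global energy balance (\S\ref{Step3}): one adds back the removed pieces and estimates $\MM\bigl(S_{n_j}+\frac1k\sum_i T\trace B_{n_j^{-1}}(p_i)\bigr)$, noting that each index whose local picture is $W_j$ contributes $+(1-(1-k^{-1})^\alpha)\MM(T\trace B_{n_j^{-1}}(p_i))$, while each index with picture $Z_j$ or with $p_i\notin\supp(S)$ contributes $-k^{-\alpha}\MM(T\trace B_{n_j^{-1}}(p_i))$; combined with $\MM(S_{n_j})\leq\MM(T_{n_j})$ this gives a bound by $\MM(T)$ which is strict unless no index of the second kind occurs. Since the modified current has boundary $b$ and $T\in\OTP(b)$, strict inequality is impossible, and this single comparison simultaneously excludes the $Z_j$ configurations, forces $p_i\in\supp(S)$ for all $i$ (hence $S=T$ via Lemma \ref{l:magic_points}), and identifies $S_{n_j}+\frac1k\sum_i T\trace B_{n_j^{-1}}(p_i)$ with $T$, i.e. $S_{n_j}=T_{n_j}$. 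Your alternative final gluing via Lemma \ref{l:supports_determine_current} also relies on facts you have not established (that $T_{n_j}\in\OTP(b_{n_j})$ and that $\supp(S_{n_j})=\supp(T_{n_j})$ globally, not just near the $p_i$), so the conclusion genuinely hinges on this missing global argument.
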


\begin{proof}
We divide the proof in three steps. In \S \ref{Step1} we prove that locally in a box around each point $p\in\{p_1,\dots,p_h\}\cap \supp(S)$ for $j$ sufficiently large the restriction to the box of the current $S_{n_j}$ is a minimizer of the $\alpha$-mass for a certain boundary whose support is a set of four \emph{almost collinear} points. In \S\ref{Step2} we analyze all the possible topologies for the minimizers with such boundary and we are able to exclude all of them except for two. In \S \ref{Step3} we combine the local analysis with a global energy estimate to conclude.\\ 

%{\color{blue}{\bf{Step 1: At least one of the $p_i$'s is contained in $\supp(S)$.}}{\color{red}(I am not sure if we need it)} Assume by contradiction that $\supp(S) \cap \{p_1, \dots, p_h\} = \emptyset$. Then there is an $r>0$ such that $B_{3r}(\supp(S))\cap \{p_1, \dots, p_h\} = \emptyset$. Apply Lemma \ref{l:conv_hauss} to find a subsequence $(S_{n_j})_j$ such that for $j$ large enough, we have
%\[ \supp(S_{n_j})\subset B_r(\supp(S)\cup\{p_1,\dots,p_h\}) = B_r(\supp(S)) \cup B_r(\{p_1,\dots,p_h\})\,.\]
%Observe that $B_r(\supp(S))$ and $B_r(\{p_1,\dots,p_h\})$ are disjoint. Define $\tilde S_{n_j} := S_{n_j} \trace B_{2r}(\supp(S))$. 
%By \cite[Lemma 28.5 (2)]{SimonLN} with $f(x)=\dist(x, \supp(S))$ and the fact that $\supp(\tilde S_{n_j}) \cap \partial B_{2r}(\supp(S)) = \emptyset$,
%we have that $ \partial \tilde S_{n_j} = b_{n_j} \trace B_{2r}(\supp(S)) = b$. 
%This leads to the following contradiction
%\[ \Mass^\alpha(T) \leq \Mass^\alpha(\tilde S_{n_j}) \leq \Mass^\alpha(S_{n_j}) \overset{ \eqref{e:alphamass_tienne}}{<} \Mass^\alpha(T)\,. \]
%}

\subsection{Local structure of $S_{n_j}$.}\label{Step1} Let $\rho$ be sufficiently small, to be chosen later (see (2a), (3a) and (3b) in \S\ref{Step2}). For every $i=1,\dots, h$ and for $p_i\in\supp(S)$, by Lemma \ref{l:magic_points} we can choose orthonormal coordinates $(x,y)\in\R\times\R^{d-1}$ such that, up to a dilation with homothety ratio $c$ with $$c>\frac{8}{\text{dist}(p_i,\supp(b)\cup\BR(S))},$$ denoting $Q:=[-8,8]\times B_{\rho}^{d-1}(0)$ and $B_j:=(-cn_j^{-1},0)$, $C_j:=(cn_j^{-1},0)$, for $j=1,2,\dots$, it holds:  
\begin{itemize}
    \item [(i)] $p_i=(0,0)$;
    \item [(ii)]$S\trace Q=\theta\llbracket\sigma\rrbracket$, where $\theta\in\Z$ and $\sigma:=[-8,8]\times\{0\}^{d-1}$ is positively oriented;
    \item [(iii)] $\{p_1,\dots,p_h\}\cap \sigma= \{p_i\}$;
    \item [(iv)] for $j$ sufficiently large it holds $b_{n_j}\trace Q =k^{-1}\theta(\delta_{B_j}-\delta_{C_j})$.
\end{itemize} 

\begin{figure}[ht]
	\centering
	\captionsetup{justification=centering}
	\includegraphics[width=0.6\textwidth]{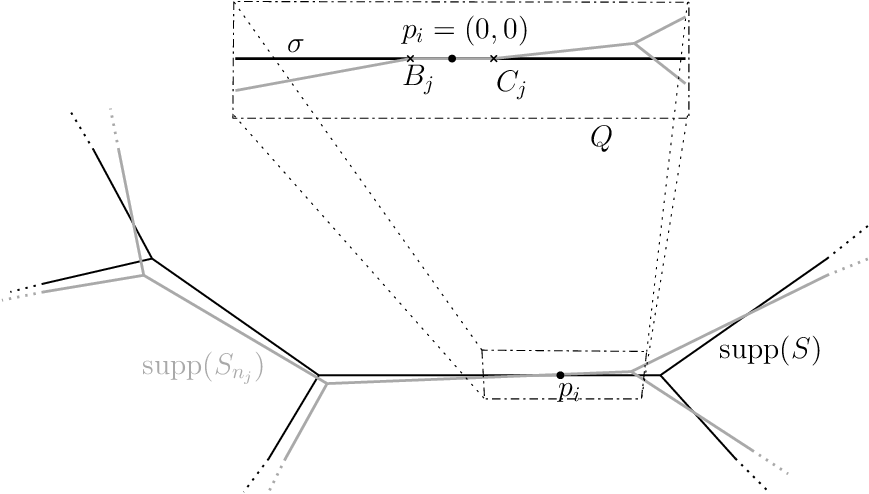}
	\caption{Illustration of our choice of $Q$.}\label{f:local_structure}
\end{figure}

The choice of $Q$ is illustrated in Figure \ref{f:local_structure}. By Lemma \ref{l:conv_hauss}, for this $\rho$, we may choose $j$ large enough such that
\begin{equation}\label{e:where_is_Sn}
    \supp(S_{n_j})\cap \bar{Q} \subset B_\rho(\sigma).
\end{equation}
For $x \in \R$ we denoting by $S_{n_j}^x$ the slice of $S_{n_j}\trace Q$ at the point $x$
with respect to the projection $\pi:\R\times\R^{d-1}\to\R$, see \cite[\S 28]{SimonLN} or \cite[\S 4.3]{FedererBOOK}. We infer from the flat convergence of $S_{n_j}$ to $S$ that for $\Haus^1$-a.e. $x\in[-8,8]$ it holds
\begin{equation}\label{e:flat_slices}
    \Flat_K(S_{n_j}^x-\theta\delta_{(x,0)})\to 0\quad \mbox{as $j\to \infty$}
\end{equation}
and moreover by Lemma \ref{l:integral_minimizers} the multiplicities of $S_{n_j}^x$ are integer multiples of $k^{-1}$. 

We aim to prove that for $j$ sufficiently large there are points $y_j^{\pm}\in B^{d-1}_\rho(0)$ such that \begin{equation}\label{e:good_slices}
S_{n_j}^{\pm 4}=\theta\delta_{(\pm 4,y_j^{\pm})},    
\end{equation}

To this aim we seek points $x_1(j)\in[-6,-5]$, $x_2(j)\in[-2,-1]$,  $x_3(j)\in[1,2]$ and
$x_4(j)\in[5,6]$ such that for $i=1,\dots,4$ it holds
\begin{equation}\label{e:almost_good_slices}
    S_{n_j}^{x_i(j)}=\theta\delta_{(x_i(j),y_i(j))},
\end{equation}
for some points $y_i(j)\in B^{d-1}_\rho(0)$. If so, by \cite[Lemma 28.5]{SimonLN}, 
\eqref{e:almost_good_slices} and \eqref{e:where_is_Sn} imply, denoting $$Q_1^j:= (x_1(j),x_2(j)) \times B_{\rho}^{d-1}(0)\quad\mbox{and}\quad Q_2^j:=(x_3(j),x_4(j)) \times B_{\rho}^{d-1}(0),$$
that $$\partial(S_{n_j}\trace Q_1^j)=S_{n_j}^{x_2(j)}-S_{n_j}^{x_1(j)}\quad \mbox{and}\quad\partial(S_{n_j}\trace Q_2^j)=S_{n_j}^{x_4(j)}-S_{n_j}^{x_3(j)}.$$
In turn, by \cite[Proposition 7.4]{BCM}
the latter implies \eqref{e:good_slices}.

In order to prove \eqref{e:almost_good_slices}, we focus on the interval $I:=[1,2]$ as the argument for the remaining intervals is identical. Firstly, we observe that by \eqref{e:flat_slices} we have
\begin{equation}\label{e:mass_slices_larger_theta}
\liminf_j(\Mass(S_{n_j}^x))\geq \theta \quad\mbox{for $\Haus^1$-a.e. $x\in I$}.
\end{equation}
Next, denoting $\Omega:=I\times B_{\rho}^{d-1}(0)$, we claim that for $j$ sufficiently large and for every $C>0$ it holds that
\begin{equation}\label{e:slices_with_small_energy}
    \Haus^1(\{x\in I:\MM(S_{n_j}^x)\leq \theta^\alpha + C\})>0,
\end{equation}
where for a 0-current $Z:=\sum_{\ell\in\N}\theta_\ell\delta_{z_\ell}$ we denoted $\MM(Z):=\sum_{\ell\in\N}|\theta_\ell|^\alpha$.

Assume by contradiction that \eqref{e:slices_with_small_energy} is false for infinitely many indices $j$. By \cite[equation (3.11)]{CDRMcpam}, for those indices we have
\begin{equation}\label{e:towards_contr}
    \MM(S_{n_j}\trace \bar \Omega)\geq\theta^\alpha+C=\MM(S\trace \bar \Omega)+C.
\end{equation}
The latter, combined with \eqref{e:alphamass_tienne}, implies that for the same indices we have 
$$\MM(S_{n_j}\trace (\R^d\setminus \bar \Omega))<\MM(S\trace (\R^d\setminus \bar \Omega))-C,$$
which contradicts \cite[Proposition 2.6]{CDRMS}. 
From \eqref{e:mass_slices_larger_theta} and \eqref{e:slices_with_small_energy} we deduce that for $j$ sufficiently large there exists $x_1(j)\in I$ such that
\begin{equation}\label{e:final_good_slice}
\Mass(S_{n_j}^{x_1(j)})\geq\theta\quad\mbox{and}\quad \MM(S_{n_j}^{x_1(j)})\leq\theta^\alpha+C.
\end{equation}
Lastly we prove that if $C$ is sufficiently small, then \eqref{e:final_good_slice} implies 
\begin{equation}\label{e:GS}
    S_{n_j}^{x_1(j)}=\theta\delta_{(x_1(j),y_1(j))},
\end{equation}
for some points $y_1(j)\in B^{d-1}_\rho(0)$, thus completing the proof of \eqref{e:almost_good_slices}.

Towards a proof by contradiction of \eqref{e:GS}, observe that for every 0-current $Z=\sum_{\ell=1}^M\theta_\ell\delta_{z_\ell}$, with $M\geq 2$, $|\theta_\ell|\geq k^{-1}$ and $z_\ell$ distinct, satisfying $\Mass(Z)=\sum_\ell|\theta_\ell|\geq \theta$, the strict subadditivity of the function $t\mapsto t^\alpha$ (for $t>0$) yields the existence of a $\bar C=\bar C(\alpha, \theta, k)> 0$ such that
$$\MM(Z)=|\theta_1|^\alpha+\sum_{\ell=2}^{M}|\theta_\ell|^\alpha\geq \min\{(mk^{-1})^\alpha + (\theta-mk^{-1})^\alpha: m=1,\dots,k\theta-1\}>\theta^\alpha+\bar C.$$
This contradicts \eqref{e:final_good_slice}, by the arbitrariness of $C$.\\

It follows from \eqref{e:good_slices} and \cite[Lemma 28.5]{SimonLN} that, denoting $$Q':=(-4,4) \times B_{\rho}^{d-1}(0), \quad A_j:= ( -4,y_j^-),\quad D_j:=(4,y_j^+),$$
we have
\begin{equation}\label{e:four_points}
\partial (S_{n_j}\trace Q') =\theta\left(\delta_{D_j}-\delta_{A_j}+k^{-1}\big(\delta_{B_j}-\delta_{C_j}\big)\right),   
\end{equation}
for $j$ sufficiently large (see Figure \ref{f:wurst}).

\begin{figure}[ht]
	\centering
	\captionsetup{justification=centering}
	\includegraphics[width=0.9\textwidth]{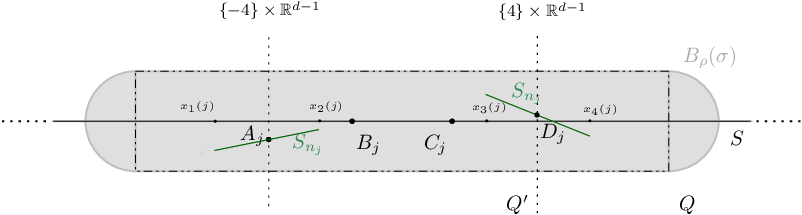}
	\caption{Representation of parts of $S_{n_j}\trace Q$.}\label{f:wurst}
\end{figure}
\vspace{0.5cm}

\subsection{Analysis of the possible topologies of $S_{n_j}\trace Q'$.}\label{Step2}
Since $S_{n_j}\in\OTP(b_{n_j})$ we must have $S_{n_j}\trace Q'\in\OTP(\partial (S_{n_j}\trace Q'))$. In general, we will denote by $\sigma_{PR}$ the oriented segment from the point $P$ to the point $R$. We aim to prove that for $k\geq k_0(\alpha)$ and for $\rho \leq \rho(k)$ sufficiently small it holds $S_{n_j}\trace Q' \in \{W_j, Z_j\}$, for $j$ large enough, where 
\begin{equation}\label{e:THE_ONE}
    W_j := \theta \left( \llbracket \sigma_{A_jB_j} \rrbracket + \llbracket \sigma_{C_jD_j} \rrbracket + \frac{k-1}{k} \llbracket \sigma_{B_jC_j} \rrbracket \right)\quad \mbox{and}\quad Z_j := \theta \left( \llbracket \sigma_{A_jD_j} \rrbracket + \frac{1}{k} \llbracket \sigma_{C_jB_j} \rrbracket \right) \,, 
\end{equation}
see Table \ref{FigONES}.
We will do this by excluding every other topology comparing angle conditions which are given by the multiplicities of the segments (which depend on $k$) and contradict the choice of $\rho$. Thus, when we say for $\rho$ small enough, we mean implicitly to choose $j$ large enough such that by Lemma \ref{l:conv_hauss}, we have $\supp(S_{n_j})\subset B_\rho(\supp(S)\cup\{p_1,\dots,p_h\})$ for the desired $\rho$ .

Write $S_{n_j}\trace Q'=\sum_{i<j}a_{ij}\llbracket\sigma_{ij}\rrbracket$ as in \eqref{e:def_topo} and observe that by Lemma \ref{l:numberBranch}, as $\Haus^0(\partial (S_{n_j}\trace Q'))=4$, then $\Haus^0(\BR (S_{n_j}\trace Q'))\in \{0,1,2\}$. We thus analyze the three cases separately and we recall that, by Lemma \ref{l:supports_determine_current}, in order to prove \eqref{e:THE_ONE} it suffices to prove that $$\supp(S_{n_j}\trace Q')=\sigma_{A_jB_j}\cup\sigma_{B_jC_j}\cup\sigma_{C_jD_j}\quad\mbox{or}\quad \supp(S_{n_j}\trace Q')=\sigma_{A_jD_j}\cup\sigma_{B_jC_j}.$$

\begin{longtable}{  p{1cm} c } \label{FigONES} 

\\ $W_j$ &
    \begin{tikzpicture}
         \filldraw[black] (0,0) circle (2pt) node[anchor=south]{$A$};
        \filldraw[black] (3.6, 2) circle (2pt) node[anchor=north]{$B$};
        \filldraw[black] (5.4, 2) circle (2pt) node[anchor=north]{$C$};
        \filldraw[black] (8.8, 2.8) circle (2pt) node[anchor=north]{$D$};
        \path[draw = gray,% 
             decoration={%
            markings,%
            mark=at position 0.5   with {\arrow[scale=1.5]{>}},%
            },%
            postaction=decorate] (0,0) -- (3.6, 2)
            node[midway, above, color= gray, font=\small]{$\theta$};
        \path[draw = gray,% 
             decoration={%
            markings,%
            mark=at position 0.5   with {\arrow[scale=1.5]{>}},%
            },%
            postaction=decorate] (5.4, 2) -- (8.8, 2.8)
            node[midway, below, color= gray, font=\small]{$\theta$};
        \path[draw = gray,% 
             decoration={%
            markings,%
            mark=at position 0.5   with {\arrow[scale=1.5]{>}},%
            },%
            postaction=decorate] (3.6, 2) -- (5.4, 2)
            node[midway, above, color= gray, font=\small]{$\theta-\delta$};
    \end{tikzpicture}
    \\ \hline
\\ $Z_j$ & 
\begin{tikzpicture}
     \filldraw[black] (0,0) circle (2pt) node[anchor=south]{$A$};
    \filldraw[black] (3.6, 2) circle (2pt) node[anchor=south]{$B$};
    \filldraw[black] (5.4, 2) circle (2pt) node[anchor=south]{$C$};
    \filldraw[black] (8.8, 2.8) circle (2pt) node[anchor=north]{$D$};
    \path[draw = gray,% 
             decoration={%
            markings,%
            mark=at position 0.5   with {\arrow[scale=1.5]{>}},%
            },%
            postaction=decorate] (0,0) -- (8.8, 2.8)
            node[midway, below, color= gray, font=\small]{$\theta$};
    \path[draw = gray,% 
             decoration={%
            markings,%
            mark=at position 0.5   with {\arrow[scale=1.5]{<}},%
            },%
            postaction=decorate] (3.6, 2) -- (5.4, 2)
            node[midway, above, color= gray, font=\small]{$\delta$};
\end{tikzpicture}
\\ \hline
\caption{Representation of $W_j$ and $Z_j$. From now on $\delta := \theta/k$ and we remove the subscript $j$ from the points.}
\end{longtable}

\emph{Case 1}: $\BR(S_{n_j}\trace Q') = \emptyset$. Recalling \cite[Proposition 7.4]{BCM},  $\supp(S_{n_j}\trace Q')$ must be one of the following sets, sorted alphabetically: 
\begin{itemize}
    \item [(1a)] $\sigma_{A_jB_j}\cup\sigma_{A_jC_j}\cup\sigma_{A_jD_j}$,
    \item [(1b)] $\sigma_{A_jB_j}\cup\sigma_{A_jC_j}\cup\sigma_{B_jD_j}$,
    \item [(1c)] $\sigma_{A_jB_j}\cup\sigma_{A_jC_j}\cup\sigma_{C_jD_j}$,
    \item [(1d)] $\sigma_{A_jB_j}\cup\sigma_{A_jD_j}\cup\sigma_{B_jC_j}$,
     \item [(1e)] $\sigma_{A_jB_j}\cup\sigma_{A_jD_j}\cup\sigma_{C_jD_j}$,
    \item [(1f)] $\sigma_{A_jB_j}\cup\sigma_{B_jC_j}\cup\sigma_{B_jD_j}$,
    \item [(1g)] $\sigma_{A_jB_j}\cup\sigma_{B_jC_j}\cup\sigma_{C_jD_j}$,
    \item [(1h)] $\sigma_{A_jB_j}\cup\sigma_{B_jD_j}\cup\sigma_{C_jD_j}$,
    \item [(1i)]
    $\sigma_{A_jB_j}\cup\sigma_{C_jD_j}$,
    \item [(1j)]
    $\sigma_{A_jC_j}\cup\sigma_{A_jD_j}\cup\sigma_{B_jC_j}$,
    \item [(1k)]
    $\sigma_{A_jC_j}\cup\sigma_{A_jD_j}\cup\sigma_{B_jD_j}$,    
    \item [(1l)] $\sigma_{A_jC_j}\cup\sigma_{B_jC_j}\cup\sigma_{B_jD_j}$,
    \item [(1m)] $\sigma_{A_jC_j}\cup\sigma_{B_jC_j}\cup\sigma_{C_jD_j}$,    
    \item [(1n)] $\sigma_{A_jC_j}\cup\sigma_{B_jD_j}$,
    \item [(1o)] $\sigma_{A_jC_j}\cup\sigma_{B_jD_j}\cup\sigma_{C_jD_j}$,    
    \item [(1p)] $\sigma_{A_jD_j}\cup\sigma_{B_jC_j}$,    
    \item [(1q)] $\sigma_{A_jD_j}\cup\sigma_{B_jC_j}\cup\sigma_{B_jD_j}$,
    \item [(1r)] $\sigma_{A_jD_j}\cup\sigma_{B_jC_j}\cup\sigma_{C_jD_j}$,
    \item [(1s)] $\sigma_{A_jD_j}\cup\sigma_{B_jD_j}\cup\sigma_{C_jD_j}$.
\end{itemize}
Observe that we omitted the cases
\begin{itemize}
    \item [(i)] $\sigma_{A_jB_j}\cup\sigma_{A_jC_j}\cup\sigma_{B_jC_j}$,
    \item [(ii)] $\sigma_{A_jB_j}\cup\sigma_{A_jD_j}\cup\sigma_{B_jD_j}$,
    \item [(iii)] $\sigma_{A_jC_j}\cup\sigma_{A_jD_j}\cup\sigma_{C_jD_j}$ 
    \item [(iv)] $\sigma_{B_jC_j}\cup\sigma_{B_jD_j}\cup\sigma_{C_jD_j}$
\end{itemize}
because, independently of the position of the points, the support either contains a loop or does not contain one of the four points in the support of the boundary. The only exceptions to this behaviour are (ii) and (iii) only when the four points are collinear, which is not relevant, as we discuss in Sub-case 1-1 below.\\

\emph{Sub-case 1-1}. Firstly we observe that when the points $A_j,B_j,C_j,D_j$ are collinear the only admissible competitor is $Z_j$.\\ 

\emph{Sub-case 1-2}. Next, we analyze the case in which no triples among the points $A_j,B_j,C_j,D_j$ are contained in a line.

We immediately exclude those cases for which the corresponding set is not the support of any current with boundary $\partial(S_{n_j}\trace Q')$. Hence we can exclude (1i) and (1n), because the endpoints of the two segments in the support have different multiplicities. Moreover we exclude (1d), (1j), (1q) and (1r) as well, because the segment $\sigma_{A_j,D_j}$ should have multiplicity $\theta$, being either for $A_j$ or $D_j$ the only segment in the support containing it. On the other hand, the remaining point (respectively $D_j$ or $A_j$) is an endpoint also for a different segment of the support, from which we deduce that the multiplicity of the latter segment should be 0 (see Table \ref{Fig1}).

\begin{longtable}{  p{1cm} c } \label{Fig1} 
\\ 1d &
\begin{tikzpicture}
     \filldraw[black] (0,0) circle (2pt) node[anchor=south]{$A$};
    \filldraw[black] (1.8, 1) circle (2pt) node[anchor=south]{$B$};
    \filldraw[black] (2.6, 1) circle (2pt) node[anchor=south]{$C$};
    \filldraw[black] (4.4, 1.4) circle (2pt) node[anchor=north]{$D$};
    \draw[gray, thick] (0,0) -- (1.8, 1);
    \draw[gray, thick] (0,0) -- (4.4, 1.4);
    \draw[gray, thick] (1.8, 1) -- (2.6, 1);
\end{tikzpicture}
  \qquad \qquad
\begin{tikzpicture}
     \filldraw[black] (0,0) circle (2pt) node[anchor=south]{$A$};
    \filldraw[black] (1.8, 1) circle (2pt) node[anchor=south]{$B$};
    \filldraw[black] (2.6, 1) circle (2pt) node[anchor=south]{$C$};
    \filldraw[black] (4.4, 1.4) circle (2pt) node[anchor=north]{$D$};
    \draw[gray, thick] (0,0) -- (1.8, 1);
    \draw[gray, thick] (2.6, 1) -- (4.4, 1.4);
\end{tikzpicture}
\quad  1i 
\\ \hline 
\\ 1j &
\begin{tikzpicture}
     \filldraw[black] (0,0) circle (2pt) node[anchor=south]{$A$};
    \filldraw[black] (1.8, 1) circle (2pt) node[anchor=south]{$B$};
    \filldraw[black] (2.6, 1) circle (2pt) node[anchor=south]{$C$};
    \filldraw[black] (4.4, 1.4) circle (2pt) node[anchor=north]{$D$};
    \draw[gray, thick] (0,0) -- (2.6, 1);
    \draw[gray, thick] (1.8, 1) -- (2.6, 1);
    \draw[gray, thick] (0,0) -- (4.4, 1.4);
\end{tikzpicture}
 \qquad \qquad
\begin{tikzpicture}
     \filldraw[black] (0,0) circle (2pt) node[anchor=south]{$A$};
    \filldraw[black] (1.8, 1) circle (2pt) node[anchor=south]{$B$};
    \filldraw[black] (2.6, 1) circle (2pt) node[anchor=north]{$C$};
    \filldraw[black] (4.4, 1.4) circle (2pt) node[anchor=north]{$D$};
    \draw[gray, thick] (0,0) -- (2.6, 1);
    \draw[gray, thick] (1.8, 1) -- (4.4, 1.4);
\end{tikzpicture}
\quad 1n
\\ \hline 
\\ 1q &
\begin{tikzpicture}
     \filldraw[black] (0,0) circle (2pt) node[anchor=south]{$A$};
    \filldraw[black] (1.8, 1) circle (2pt) node[anchor=south]{$B$};
    \filldraw[black] (2.6, 1) circle (2pt) node[anchor=south]{$C$};
    \filldraw[black] (4.4, 1.4) circle (2pt) node[anchor=north]{$D$};
    \draw[gray, thick] (0,0) -- (4.4, 1.4);
    \draw[gray, thick] (1.8, 1) -- (2.6, 1);
    \draw[gray, thick] (1.8, 1) -- (4.4, 1.4);
\end{tikzpicture}
 \qquad \qquad
\begin{tikzpicture}
     \filldraw[black] (0,0) circle (2pt) node[anchor=south]{$A$};
    \filldraw[black] (1.8, 1) circle (2pt) node[anchor=south]{$B$};
    \filldraw[black] (2.6, 1) circle (2pt) node[anchor=south]{$C$};
    \filldraw[black] (4.4, 1.4) circle (2pt) node[anchor=north]{$D$};
    \draw[gray, thick] (0,0) -- (4.4, 1.4);
    \draw[gray, thick] (1.8, 1) -- (2.6, 1);
    \draw[gray, thick] (2.6, 1) -- (4.4, 1.4);
\end{tikzpicture}
\quad 1r
\\ \hline
\caption{Representation of (1d), (1i), (1j), (1n), (1q), (1r).}
\end{longtable}

We exclude the following cases by direct comparison with the $\alpha$-mass of $Z_j$, for $j$ sufficiently large (see Table \ref{Fig2}):

\begin{itemize}
    \item (1a), whose corresponding $\alpha$-mass is
    $$\theta^\alpha(\Haus^1(\sigma_{A_jD_j})+k^{-\alpha}(\Haus^1(\sigma_{A_jB_j})+\Haus^1(\sigma_{A_jC_j}))>\MM(Z_j).$$
    \item (1b), whose corresponding $\alpha$-mass is
    $$\theta^\alpha((1+k^{-1})^{\alpha}\Haus^1(\sigma_{A_jB_j})+\Haus^1(\sigma_{B_jD_j})+k^{-\alpha}\Haus^1(\sigma_{A_jC_j}))>\MM(Z_j).$$
    \item (1f), whose corresponding $\alpha$-mass is
    $$\theta^\alpha(\Haus^1(\sigma_{A_jB_j})+\Haus^1(\sigma_{B_jD_j})+k^{-\alpha}\Haus^1(\sigma_{B_jC_j}))>\MM(Z_j).$$
     \item (1k), whose corresponding $\alpha$-mass is
    $$\theta^\alpha((1+k^{-1})^{\alpha}\Haus^1(\sigma_{A_jD_j})+k^{-\alpha}(\Haus^1(\sigma_{A_jC_j})+\Haus^1(\sigma_{B_jD_j})))>\MM(Z_j).$$
     \item (1l), whose corresponding $\alpha$-mass is
    $$\theta^\alpha((1+k^{-1})^{\alpha}\Haus^1(\sigma_{B_jC_j})+\Haus^1(\sigma_{A_jC_j})+\Haus^1(\sigma_{B_jD_j}))>\MM(Z_j).$$
     \item (1m), whose corresponding $\alpha$-mass is
    $$\theta^\alpha(\Haus^1(\sigma_{A_jC_j})+\Haus^1(\sigma_{C_jD_j})+k^{-\alpha}\Haus^1(\sigma_{B_jC_j}))>\MM(Z_j).$$
    \item (1o), whose corresponding $\alpha$-mass is
    $$\theta^\alpha((1+k^{-1})^{\alpha}\Haus^1(\sigma_{C_jD_j})+\Haus^1(\sigma_{A_jC_j})+k^{-\alpha}\Haus^1(\sigma_{B_jD_j}))>\MM(Z_j).$$
    \item (1s), whose corresponding $\alpha$-mass is
    $$\theta^\alpha(\Haus^1(\sigma_{A_jD_j})+k^{-\alpha}(\Haus^1(\sigma_{B_jD_j})+\Haus^1(\sigma_{C_jD_j}))>\MM(Z_j).$$
\end{itemize}

\begin{longtable}{  p{1cm} c } \label{Fig2} 
        1a &
        \begin{tikzpicture}
         \filldraw[black] (0,0) circle (2pt) node[anchor=south]{$A$};
        \filldraw[black] (3.6, 2) circle (2pt) node[anchor=south]{$B$};
        \filldraw[black] (5.4, 2) circle (2pt) node[anchor=south]{$C$};
        \filldraw[black] (8.8, 2.8) circle (2pt) node[anchor=north]{$D$};
        \path[draw = gray,% 
             decoration={%
            markings,%
            mark=at position 0.5   with {\arrow[scale=1.5]{>}},%
            },%
            postaction=decorate] (0,0) -- (3.6, 2)
            node[midway, above left, color= gray, font=\small]{$\delta$};
        \path[draw = gray,% 
             decoration={%
            markings,%
            mark=at position 0.5   with {\arrow[scale=1.5,]{<}},%
            },%
            postaction=decorate] (0,0) -- (5.4, 2)
            node[midway, above, color= gray, font=\small]{$\delta$};
        \path[draw = gray,% 
             decoration={%
            markings,%
            mark=at position 0.5   with {\arrow[scale=1.5]{>}},%
            },%
            postaction=decorate] (0,0) -- (8.8, 2.8)
            node[midway, below right, color= gray, font=\small]{$\theta$};
        \end{tikzpicture}
        
      \\ \hline \\1b &
          \begin{tikzpicture}
         \filldraw[black] (0,0) circle (2pt) node[anchor=south]{$A$};
        \filldraw[black] (3.6, 2) circle (2pt) node[anchor=south]{$B$};
        \filldraw[black] (5.4, 2) circle (2pt) node[anchor=north]{$C$};
        \filldraw[black] (8.8, 2.8) circle (2pt) node[anchor=north]{$D$};
        \path[draw = gray,% 
             decoration={%
            markings,%
            mark=at position 0.5   with {\arrow[scale=1.5]{>}},%
            },%
            postaction=decorate] (0,0) -- (3.6, 2)
            node[midway, above left, color= gray, font=\small]{$\theta+\delta$};
        \path[draw = gray,% 
             decoration={%
            markings,%
            mark=at position 0.5   with {\arrow[scale=1.5]{<}},%
            },%
            postaction=decorate] (0,0) -- (5.4, 2)
            node[midway, below, color= gray, font=\small]{$\delta$};
        \path[draw = gray,% 
             decoration={%
            markings,%
            mark=at position 0.5   with {\arrow[scale=1.5]{>}},%
            },%
            postaction=decorate] (3.6, 2) -- (8.8, 2.8)
            node[midway, above, color= gray, font=\small]{$\theta$};
        \end{tikzpicture}
\\ \hline \\ 1f &
    \begin{tikzpicture}
         \filldraw[black] (0,0) circle (2pt) node[anchor=south]{$A$};
        \filldraw[black] (3.6, 2) circle (2pt) node[anchor=south]{$B$};
        \filldraw[black] (5.4, 2) circle (2pt) node[anchor=north]{$C$};
        \filldraw[black] (8.8, 2.8) circle (2pt) node[anchor=north]{$D$};
        \path[draw = gray,% 
             decoration={%
            markings,%
            mark=at position 0.5   with {\arrow[scale=1.5]{>}},%
            },%
            postaction=decorate] (0,0) -- (3.6, 2)
            node[midway, above, color= gray, font=\small]{$\theta$};
        \path[draw = gray,% 
             decoration={%
            markings,%
            mark=at position 0.5   with {\arrow[scale=1.5]{>}},%
            },%
            postaction=decorate] (3.6, 2) -- (8.8, 2.8)
            node[midway, above, color= gray, font=\small]{$\theta$};
        \path[draw = gray,% 
             decoration={%
            markings,%
            mark=at position 0.5   with {\arrow[scale=1.5]{<}},%
            },%
            postaction=decorate] (3.6, 2) -- (5.4, 2)
            node[midway, below, color= gray, font=\small]{$\delta$};
    \end{tikzpicture}
\\ \hline
     \\ 1k &
\begin{tikzpicture}
     \filldraw[black] (0,0) circle (2pt) node[anchor=south]{$A$};
    \filldraw[black] (3.6, 2) circle (2pt) node[anchor=south]{$B$};
    \filldraw[black] (5.4, 2) circle (2pt) node[anchor=south]{$C$};
    \filldraw[black] (8.8, 2.8) circle (2pt) node[anchor=north]{$D$};
    \path[draw = gray,% 
             decoration={%
            markings,%
            mark=at position 0.5   with {\arrow[scale=1.5]{<}},%
            },%
            postaction=decorate] (0,0) -- (5.4, 2)
            node[midway, above, color= gray, font=\small]{$\delta$};
    \path[draw = gray,% 
             decoration={%
            markings,%
            mark=at position 0.5   with {\arrow[scale=1.5]{<}},%
            },%
            postaction=decorate] (3.6, 2) -- (8.8, 2.8)
            node[midway, above, color= gray, font=\small]{$\delta$};
    \path[draw = gray,% 
             decoration={%
            markings,%
            mark=at position 0.5   with {\arrow[scale=1.5]{>}},%
            },%
            postaction=decorate] (0,0) -- (8.8, 2.8)
            node[midway, below, color= gray, font=\small]{$\theta+\delta$};
\end{tikzpicture}
\\ \hline
\\ 1l &
\begin{tikzpicture}
     \filldraw[black] (0,0) circle (2pt) node[anchor=south]{$A$};
    \filldraw[black] (3.6, 2) circle (2pt) node[anchor=south]{$B$};
    \filldraw[black] (5.4, 2) circle (2pt) node[anchor=north]{$C$};
    \filldraw[black] (8.8, 2.8) circle (2pt) node[anchor=north]{$D$};
    \path[draw = gray,% 
             decoration={%
            markings,%
            mark=at position 0.5   with {\arrow[scale=1.5]{>}},%
            },%
            postaction=decorate] (0,0) -- (5.4, 2)
            node[midway, above, color= gray, font=\small]{$\theta$};
    \path[draw = gray,% 
             decoration={%
            markings,%
            mark=at position 0.5   with {\arrow[scale=1.5]{<}},%
            },%
            postaction=decorate] (3.6, 2) -- (5.4, 2)
            node[midway, below, color= gray, font=\small]{$\theta+\delta$};
    \path[draw = gray,% 
             decoration={%
            markings,%
            mark=at position 0.5   with {\arrow[scale=1.5]{>}},%
            },%
            postaction=decorate] (3.6, 2) -- (8.8, 2.8)
            node[midway, above, color= gray, font=\small]{$\theta$};
\end{tikzpicture}
\\ \hline \\ 1m &
\begin{tikzpicture}
     \filldraw[black] (0,0) circle (2pt) node[anchor=south]{$A$};
    \filldraw[black] (3.6, 2) circle (2pt) node[anchor=south]{$B$};
    \filldraw[black] (5.4, 2) circle (2pt) node[anchor=south]{$C$};
    \filldraw[black] (8.8, 2.8) circle (2pt) node[anchor=north]{$D$};
    \path[draw = gray,% 
             decoration={%
            markings,%
            mark=at position 0.5   with {\arrow[scale=1.5]{>}},%
            },%
            postaction=decorate] (0,0) -- (5.4, 2)
            node[midway, above, color= gray, font=\small]{$\theta$};
    \path[draw = gray,% 
             decoration={%
            markings,%
            mark=at position 0.5   with {\arrow[scale=1.5]{<}},%
            },%
            postaction=decorate] (3.6, 2) -- (5.4, 2)
            node[midway, above, color= gray, font=\small]{$\delta$};
    \path[draw = gray,% 
             decoration={%
            markings,%
            mark=at position 0.5   with {\arrow[scale=1.5]{>}},%
            },%
            postaction=decorate] (5.4, 2) -- (8.8, 2.8)
            node[midway, above, color= gray, font=\small]{$\theta$};
\end{tikzpicture}
\\ \hline
\\ 1o &
\begin{tikzpicture}
     \filldraw[black] (0,0) circle (2pt) node[anchor=south]{$A$};
    \filldraw[black] (3.6, 2) circle (2pt) node[anchor=south]{$B$};
    \filldraw[black] (5.4, 2) circle (2pt) node[anchor=north]{$C$};
    \filldraw[black] (8.8, 2.8) circle (2pt) node[anchor=north]{$D$};
       \path[draw = gray,% 
             decoration={%
            markings,%
            mark=at position 0.5   with {\arrow[scale=1.5]{>}},%
            },%
            postaction=decorate] (0,0) -- (5.4, 2)
            node[midway, above, color= gray, font=\small]{$\theta$};
       \path[draw = gray,% 
             decoration={%
            markings,%
            mark=at position 0.5   with {\arrow[scale=1.5]{>}},%
            },%
            postaction=decorate] (5.4, 2) -- (8.8, 2.8)
            node[midway, below, color= gray, font=\small]{$\theta+\delta$};
       \path[draw = gray,% 
             decoration={%
            markings,%
            mark=at position 0.5   with {\arrow[scale=1.5]{<}},%
            },%
            postaction=decorate] (3.6, 2) -- (8.8, 2.8)
            node[midway, above, color= gray, font=\small]{$\delta$};
\end{tikzpicture}\\ \hline  \\ 1s &
\begin{tikzpicture}
     \filldraw[black] (0,0) circle (2pt) node[anchor=south]{$A$};
    \filldraw[black] (3.6, 2) circle (2pt) node[anchor=south]{$B$};
    \filldraw[black] (5.4, 2) circle (2pt) node[anchor=south]{$C$};
    \filldraw[black] (8.8, 2.8) circle (2pt) node[anchor=north]{$D$};
    \path[draw = gray,% 
             decoration={%
            markings,%
            mark=at position 0.5   with {\arrow[scale=1.5]{>}},%
            },%
            postaction=decorate] (0,0) -- (8.8, 2.8)
            node[midway, below, color= gray, font=\small]{$\theta$};
    \path[draw = gray,% 
             decoration={%
            markings,%
            mark=at position 0.5   with {\arrow[scale=1.5]{<}},%
            },%
            postaction=decorate] (3.6, 2) -- (8.8, 2.8)
            node[midway, above, color= gray, font=\small]{$\delta$};
    \path[draw = gray,% 
             decoration={%
            markings,%
            mark=at position 0.5   with {\arrow[scale=1.5]{>}},%
            },%
            postaction=decorate] (5.4, 2) -- (8.8, 2.8)
            node[midway, below, color= gray, font=\small]{$\delta$};
\end{tikzpicture}\\ \hline
\caption{Representation of (1a), (1b), (1f), (1k), (1l), (1m), (1o), (1s).}
\end{longtable}

For $j$ sufficiently large and for $k\geq k_0(\alpha)$, the $\alpha$-mass corresponding to (1c) is (see Table \ref{Figch})
\begin{equation}\label{e:choicek1}
\begin{split}
&\theta^\alpha(\Haus^1(\sigma_{C_jD_j})+(1-k^{-1})^{\alpha}\Haus^1(\sigma_{A_jC_j})+k^{-\alpha}\Haus^1(\sigma_{A_jB_j}))\\
&> \theta^\alpha(\Haus^1(\sigma_{C_jD_j})+((1-k^{-1})^{\alpha}+k^{-\alpha})\Haus^1(\sigma_{A_jB_j}))\\
&> \theta^\alpha(\Haus^1(\sigma_{C_jD_j})+\Haus^1(\sigma_{A_jB_j})+\frac{k^{-\alpha}}{2}\Haus^1(\sigma_{A_jB_j}))\\
&> \theta^\alpha(\Haus^1(\sigma_{C_jD_j})+\Haus^1(\sigma_{A_jB_j})+k^{-\alpha}\Haus^1(\sigma_{B_jC_j}))=\MM(W_j).
\end{split}
\end{equation}
Also, for $j$ sufficiently large and for $k\geq k_0(\alpha)$, the $\alpha$-mass corresponding to (1h) is (see Table \ref{Figch})
\begin{equation}\label{e:choicek2}
\begin{split}
&\theta^\alpha(\Haus^1(\sigma_{A_jB_j})+(1-k^{-1})^{\alpha}\Haus^1(\sigma_{B_jD_j})+k^{-\alpha}\Haus^1(\sigma_{C_jD_j}))\\
&> \theta^\alpha(\Haus^1(\sigma_{A_jB_j})+((1-k^{-1})^{\alpha}+k^{-\alpha})\Haus^1(\sigma_{C_jD_j}))\\
&> \theta^\alpha(\Haus^1(\sigma_{A_jB_j})+\Haus^1(\sigma_{C_jD_j})+\frac{k^{-\alpha}}{2}\Haus^1(\sigma_{C_jD_j}))\\
&> \theta^\alpha(\Haus^1(\sigma_{A_jB_j})+\Haus^1(\sigma_{C_jD_j})+k^{-\alpha}\Haus^1(\sigma_{B_jC_j}))=\MM(W_j).
\end{split}
\end{equation}

\begin{longtable}{  p{1cm} c } \label{Figch} 
        \\1c &
        \begin{tikzpicture}
     \filldraw[black] (0,0) circle (2pt) node[anchor=south]{$A$};
    \filldraw[black] (3.6, 2) circle (2pt) node[anchor=south]{$B$};
    \filldraw[black] (5.4, 2) circle (2pt) node[anchor=south]{$C$};
    \filldraw[black] (8.8, 2.8) circle (2pt) node[anchor=north]{$D$};
    \path[draw = gray,% 
             decoration={%
            markings,%
            mark=at position 0.5   with {\arrow[scale=1.5]{>}},%
            },%
            postaction=decorate] (0,0) -- (3.6, 2)
            node[midway, above, color= gray, font=\small]{$\delta$};
    \path[draw = gray,% 
             decoration={%
            markings,%
            mark=at position 0.5   with {\arrow[scale=1.5]{>}},%
            },%
            postaction=decorate] (0,0) -- (5.4, 2)
            node[midway, below, color= gray, font=\small]{$\theta-\delta$};
    \path[draw = gray,% 
             decoration={%
            markings,%
            mark=at position 0.5   with {\arrow[scale=1.5]{>}},%
            },%
            postaction=decorate] (5.4, 2) -- (8.8, 2.8)
            node[midway, below, color= gray, font=\small]{$\theta$};
\end{tikzpicture}
\\ \hline
\\ 1h &
\begin{tikzpicture}
     \filldraw[black] (0,0) circle (2pt) node[anchor=south]{$A$};
    \filldraw[black] (3.6, 2) circle (2pt) node[anchor=north]{$B$};
    \filldraw[black] (5.4, 2) circle (2pt) node[anchor=north]{$C$};
    \filldraw[black] (8.8, 2.8) circle (2pt) node[anchor=north]{$D$};
    \path[draw = gray,% 
             decoration={%
            markings,%
            mark=at position 0.5   with {\arrow[scale=1.5]{>}},%
            },%
            postaction=decorate] (0,0) -- (3.6, 2)
            node[midway, above, color= gray, font=\small]{$\theta$};
    \path[draw = gray,% 
             decoration={%
            markings,%
            mark=at position 0.5   with {\arrow[scale=1.5]{>}},%
            },%
            postaction=decorate] (5.4, 2) -- (8.8, 2.8)
            node[midway, below, color= gray, font=\small]{$\delta$};
    \path[draw = gray,% 
             decoration={%
            markings,%
            mark=at position 0.5   with {\arrow[scale=1.5]{>}},%
            },%
            postaction=decorate] (3.6, 2) -- (8.8, 2.8)
            node[midway, above, color= gray, font=\small]{$\theta-\delta$};
\end{tikzpicture} \\ \hline 
  \caption{Representation of (1c), (1h).}
\end{longtable}

Lastly, we exclude case (1e) by direct comparison with the $\alpha$-mass of $Z_j$. For $j$ sufficiently large and for $k\geq k_0(\alpha)$, the $\alpha$-mass corresponding to (1e) is (see Table \ref{Fige})
\begin{equation}\label{e:choicek3}
\begin{split}
&\theta^\alpha((1-k^{-1})^{\alpha}\Haus^1(\sigma_{A_jD_j})+k^{-\alpha}(\Haus^1(\sigma_{A_jB_j})+\Haus^1(\sigma_{C_jD_j})))\\
&> \theta^\alpha((1-k^{-1})^{\alpha}\Haus^1(\sigma_{A_jD_j})+k^{-\alpha}\frac{1}{2}\Haus^1(\sigma_{A_jD_j}))\\
&> \theta^\alpha(\Haus^1(\sigma_{A_jD_j})+\frac{1}{4}k^{-\alpha}\Haus^1(\sigma_{A_jD_j}))\\
&> \theta^\alpha(\Haus^1(\sigma_{A_jD_j})+k^{-\alpha}\Haus^1(\sigma_{B_jC_j}))=\MM(Z_j).
\end{split}
\end{equation}

\begin{longtable}{  p{1cm} c } \label{Fige}
   \\ 1e &
\begin{tikzpicture}
     \filldraw[black] (0,0) circle (2pt) node[anchor=south]{$A$};
    \filldraw[black] (3.6, 2) circle (2pt) node[anchor=south]{$B$};
    \filldraw[black] (5.4, 2) circle (2pt) node[anchor=south]{$C$};
    \filldraw[black] (8.8, 2.8) circle (2pt) node[anchor=north]{$D$};
    \path[draw = gray,% 
             decoration={%
            markings,%
            mark=at position 0.5   with {\arrow[scale=1.5]{>}},%
            },%
            postaction=decorate] (0,0) -- (3.6, 2)
            node[midway, above, color= gray, font=\small]{$\delta$};
    \path[draw = gray,% 
             decoration={%
            markings,%
            mark=at position 0.5   with {\arrow[scale=1.5]{>}},%
            },%
            postaction=decorate] (0,0) -- (8.8, 2.8)
            node[midway, below, color= gray, font=\small]{$\theta-\delta$};
    \path[draw = gray,% 
             decoration={%
            markings,%
            mark=at position 0.5   with {\arrow[scale=1.5]{>}},%
            },%
            postaction=decorate] (5.4, 2) -- (8.8, 2.8)
            node[midway, above, color= gray, font=\small]{$\delta$};
    \end{tikzpicture}
\\ \hline
\caption{Representation of (1e).}
\end{longtable}

\emph{Sub-case 1-3}. The last situation which we need to take into account is when exactly three points are collinear. We will discuss the case in which the collinear points are $A_j$, $B_j$ and $C_j$ or $A_j, C_j$ and $D_j$. The remaining cases in which the collinear points are $B_j$, $C_j$ and $D_j$ or $A_j, B_j$ and $D_j$ are symmetric and can be treated analogously, therefore we leave the analysis to the reader.\\

\emph{Sub-case 1-3-1: $A_j$, $B_j$ and $C_j$ are collinear}.

The cases (1d), (1i), (1j), (1q), (1r) can be excluded for the same reason as in the Sub-case 1-2 (see Table \ref{Fig_wrong}).

We exclude cases (1b), (1f), (1l), (1n), which are coincident (see Table \ref{Fignall}), by direct comparison with the $\alpha$-mass of $Z_j$. For $j$ sufficiently large, the $\alpha$-mass corresponding to the above cases is
    $$\theta^\alpha(\Haus^1(\sigma_{A_jB_j})+\Haus^1(\sigma_{B_jD_j})+k^{-\alpha}\Haus^1(\sigma_{B_jC_j}))>\MM(Z_j).$$

\begin{longtable}{  p{1cm} c } \label{Fig_wrong} 
\\ 1d &
\begin{tikzpicture}
     \filldraw[black] (0,0) circle (2pt) node[anchor=south]{$A$};
    \filldraw[black] (1.8, 0) circle (2pt) node[anchor=south]{$B$};
    \filldraw[black] (2.6, 0) circle (2pt) node[anchor=south]{$C$};
    \filldraw[black] (4.4, 1.4) circle (2pt) node[anchor=north]{$D$};
    \draw[gray, thick] (0,0) -- (1.8, 0);
    \draw[gray, thick] (0,0) -- (4.4, 1.4);
    \draw[gray, thick] (1.8, 0) -- (2.6, 0);
\end{tikzpicture}
  \qquad \qquad
\begin{tikzpicture}
     \filldraw[black] (0,0) circle (2pt) node[anchor=south]{$A$};
    \filldraw[black] (1.8, 0) circle (2pt) node[anchor=south]{$B$};
    \filldraw[black] (2.6, 0) circle (2pt) node[anchor=south]{$C$};
    \filldraw[black] (4.4, 1.4) circle (2pt) node[anchor=north]{$D$};
    \draw[gray, thick] (0,0) -- (1.8, 0);
    \draw[gray, thick] (2.6, 0) -- (4.4, 1.4);
\end{tikzpicture}
\quad  1i 
\\ \hline 
\\ 1j &
\begin{tikzpicture}
     \filldraw[black] (0,0) circle (2pt) node[anchor=south]{$A$};
    \filldraw[black] (1.8, 0) circle (2pt) node[anchor=south]{$B$};
    \filldraw[black] (2.6, 0) circle (2pt) node[anchor=south]{$C$};
    \filldraw[black] (4.4, 1.4) circle (2pt) node[anchor=north]{$D$};
    \draw[gray, thick] (0,0) -- (2.6, 0);
    \draw[gray, thick] (1.8, 0) -- (2.6, 0);
    \draw[gray, thick] (0,0) -- (4.4, 1.4);
\end{tikzpicture}
 \qquad \qquad
\begin{tikzpicture}
\filldraw[black] (0,0) circle (2pt) node[anchor=south]{$A$};
    \filldraw[black] (1.8, 0) circle (2pt) node[anchor=south]{$B$};
    \filldraw[black] (2.6, 0) circle (2pt) node[anchor=south]{$C$};
    \filldraw[black] (4.4, 1.4) circle (2pt) node[anchor=north]{$D$};
    \draw[gray, thick] (0,0) -- (4.4, 1.4);
    \draw[gray, thick] (1.8, 0) -- (2.6, 0);
    \draw[gray, thick] (1.8, 0) -- (4.4, 1.4);
\end{tikzpicture}
\quad 1q
\\ \hline 
\\ 1r &
\begin{tikzpicture}
   \filldraw[black] (0,0) circle (2pt) node[anchor=south]{$A$};
    \filldraw[black] (1.8, 0) circle (2pt) node[anchor=south]{$B$};
    \filldraw[black] (2.6, 0) circle (2pt) node[anchor=south]{$C$};
    \filldraw[black] (4.4, 1.4) circle (2pt) node[anchor=north]{$D$};
    \draw[gray, thick] (0,0) -- (4.4, 1.4);
    \draw[gray, thick] (1.8, 0) -- (2.6, 0);
    \draw[gray, thick] (2.6, 0) -- (4.4, 1.4);  
\end{tikzpicture}
\\ \hline
\caption{Representation of (1d), (1i), (1j), (1q), (1r) in the collinear case.} 
\end{longtable} 

\begin{longtable}{  p{1cm} c } \label{Fignall}
   \\ &
\begin{tikzpicture}
     \filldraw[black] (0,0) circle (2pt) node[anchor=south]{$A$};
    \filldraw[black] (3.6, 0) circle (2pt) node[anchor=south]{$B$};
    \filldraw[black] (5.4, 0) circle (2pt) node[anchor=south]{$C$};
    \filldraw[black] (8.8, 2.8) circle (2pt) node[anchor=north]{$D$};
    \path[draw = gray,% 
             decoration={%
            markings,%
            mark=at position 0.5   with {\arrow[scale=1.5]{>}},%
            },%
            postaction=decorate] (0,0) -- (3.6, 0)
            node[midway, above, color= gray, font=\small]{$\theta$};
    \path[draw = gray,% 
             decoration={%
            markings,%
            mark=at position 0.5   with {\arrow[scale=1.5]{>}},%
            },%
            postaction=decorate] (3.6,0) -- (8.8, 2.8)
            node[midway, above, color= gray, font=\small]{$\theta$};
    \path[draw = gray,% 
             decoration={%
            markings,%
            mark=at position 0.5   with {\arrow[scale=1.5]{>}},%
            },%
            postaction=decorate] (5.4, 0) -- (3.6, 0)
            node[midway, above, color= gray, font=\small]{$\delta$};
    \end{tikzpicture}
\\ \hline
\caption{Representation of (1b), (1f), (1l), (1n) in the collinear case.}
\end{longtable}

We exclude case (1a), since it coincides with case (1j), which we have already excluded and we exclude cases (1k) and (1o) because they contain a loop (see Table \ref{Figloop}).
\begin{longtable}{  p{1cm} c } \label{Figloop} 
\\ 1k &
\begin{tikzpicture}
     \filldraw[black] (0,0) circle (2pt) node[anchor=south]{$A$};
    \filldraw[black] (1.8, 0) circle (2pt) node[anchor=south]{$B$};
    \filldraw[black] (2.6, 0) circle (2pt) node[anchor=south]{$C$};
    \filldraw[black] (4.4, 1.4) circle (2pt) node[anchor=north]{$D$};
    \draw[gray, thick] (0,0) -- (2.6, 0);
    \draw[gray, thick] (0,0) -- (4.4, 1.4);
    \draw[gray, thick] (1.8, 0) -- (4.4, 1.4);
\end{tikzpicture}
  \qquad \qquad
\begin{tikzpicture}
     \filldraw[black] (0,0) circle (2pt) node[anchor=south]{$A$};
    \filldraw[black] (1.8, 0) circle (2pt) node[anchor=south]{$B$};
    \filldraw[black] (2.6, 0) circle (2pt) node[anchor=south]{$C$};
    \filldraw[black] (4.4, 1.4) circle (2pt) node[anchor=north]{$D$};
    \draw[gray, thick] (0,0) -- (2.6, 0);
    \draw[gray, thick] (2.6, 0) -- (4.4, 1.4);
    \draw[gray, thick] (1.8, 0) -- (4.4, 1.4);
\end{tikzpicture}
\quad  1o 
\\ \hline 
\caption{Representation of (1k) and (1o) in the collinear case.}
\end{longtable}

We do not need to exclude cases (1c) and (1m), since the current coincides with $Z_j$ (see Table \ref{Figcm}).

\begin{longtable}{  p{1cm} c } \label{Figcm}
   \\ &
\begin{tikzpicture}
     \filldraw[black] (0,0) circle (2pt) node[anchor=south]{$A$};
    \filldraw[black] (3.6, 0) circle (2pt) node[anchor=south]{$B$};
    \filldraw[black] (5.4, 0) circle (2pt) node[anchor=south]{$C$};
    \filldraw[black] (8.8, 2.8) circle (2pt) node[anchor=north]{$D$};
    \path[draw = gray,% 
             decoration={%
            markings,%
            mark=at position 0.5   with {\arrow[scale=1.5]{>}},%
            },%
            postaction=decorate] (0,0) -- (3.6, 0)
            node[midway, above, color= gray, font=\small]{$\theta$};
    \path[draw = gray,% 
             decoration={%
            markings,%
            mark=at position 0.5   with {\arrow[scale=1.5]{>}},%
            },%
            postaction=decorate] (5.4,0) -- (8.8, 2.8)
            node[midway, above, color= gray, font=\small]{$\theta$};
    \path[draw = gray,% 
             decoration={%
            markings,%
            mark=at position 0.5   with {\arrow[scale=1.5]{>}},%
            },%
            postaction=decorate] (3.6, 0) -- (5.4, 0)
            node[midway, above, color= gray, font=\small]{$\theta-\delta$};
    \end{tikzpicture}
\\ \hline
\caption{Representation of (1c), (1m) in the collinear case.} \endlastfoot
\end{longtable}

Lastly, cases (1e), (1h), (1s) can be excluded with the same argument used in Sub-case 1-2, since the segments in the corresponding support are in general position also when $A_j, B_j$, and $C_j$ are collinear (see Table \ref{Fehs}).

\begin{longtable}{  p{1cm} c } \label{Fehs} 
\\ 1e &
\begin{tikzpicture}
     \filldraw[black] (0,0) circle (2pt) node[anchor=south]{$A$};
    \filldraw[black] (1.8, 0) circle (2pt) node[anchor=south]{$B$};
    \filldraw[black] (2.6, 0) circle (2pt) node[anchor=south]{$C$};
    \filldraw[black] (4.4, 1.4) circle (2pt) node[anchor=north]{$D$};
    \draw[gray, thick] (0,0) -- (1.8, 0);
    \draw[gray, thick] (4.4, 1.4) -- (2.6, 0);
    \draw[gray, thick] (0,0) -- (4.4, 1.4);
\end{tikzpicture}
 \qquad \qquad
\begin{tikzpicture}
\filldraw[black] (0,0) circle (2pt) node[anchor=south]{$A$};
    \filldraw[black] (1.8, 0) circle (2pt) node[anchor=south]{$B$};
    \filldraw[black] (2.6, 0) circle (2pt) node[anchor=south]{$C$};
    \filldraw[black] (4.4, 1.4) circle (2pt) node[anchor=north]{$D$};
    \draw[gray, thick] (2.6,0) -- (4.4, 1.4);
    \draw[gray, thick] (1.8, 0) -- (0, 0);
    \draw[gray, thick] (1.8, 0) -- (4.4, 1.4);
\end{tikzpicture}
\quad 1h
\\ \hline 
\\ 1s &
\begin{tikzpicture}
   \filldraw[black] (0,0) circle (2pt) node[anchor=south]{$A$};
    \filldraw[black] (1.8, 0) circle (2pt) node[anchor=south]{$B$};
    \filldraw[black] (2.6, 0) circle (2pt) node[anchor=south]{$C$};
    \filldraw[black] (4.4, 1.4) circle (2pt) node[anchor=north]{$D$};
    \draw[gray, thick] (0,0) -- (4.4, 1.4);
    \draw[gray, thick] (4.4, 1.4) -- (1.8, 0);
    \draw[gray, thick] (2.6, 0) -- (4.4, 1.4);  
\end{tikzpicture}
\\ \hline
\caption{Representation of (1e), (1h), (1s) in the collinear case.}
\end{longtable}

\emph{Sub-case 1-3-2: $A_j$, $C_j$ and $D_j$ are collinear}.

The cases (1d), (1i), (1n), (1q) can be excluded for the same reason as in the Sub-case 1-2 (see Table \ref{Fig_wrong2}).

\begin{longtable}{  p{1cm} c } \label{Fig_wrong2} 
\\ 1d &
\begin{tikzpicture}
     \filldraw[black] (0,0) circle (2pt) node[anchor=south]{$A$};
    \filldraw[black] (1.6, 0.6) circle (2pt) node[anchor=south]{$B$};
    \filldraw[black] (2.4, 0.6) circle (2pt) node[anchor=south]{$C$};
    \filldraw[black] (4, 1) circle (2pt) node[anchor=north]{$D$};
    \draw[gray, thick] (0,0) -- (1.6, 0.6);
    \draw[gray, thick] (0,0) -- (4, 1);
    \draw[gray, thick] (1.6, 0.6) -- (2.4, 0.6);
\end{tikzpicture}
  \qquad \qquad
\begin{tikzpicture}
     \filldraw[black] (0,0) circle (2pt) node[anchor=south]{$A$};
    \filldraw[black] (1.6, 0.6) circle (2pt) node[anchor=south]{$B$};
    \filldraw[black] (2.4, 0.6) circle (2pt) node[anchor=south]{$C$};
    \filldraw[black] (4, 1) circle (2pt) node[anchor=north]{$D$};
    \draw[gray, thick] (0,0) -- (1.6, 0.6);
    \draw[gray, thick] (2.4, 0.6) -- (4, 1);
\end{tikzpicture}
\quad  1i 
\\ \hline 
\\ 1n &
\begin{tikzpicture}
     \filldraw[black] (0,0) circle (2pt) node[anchor=south]{$A$};
    \filldraw[black] (1.6, 0.6) circle (2pt) node[anchor=south]{$B$};
    \filldraw[black] (2.4, 0.6) circle (2pt) node[anchor=north]{$C$};
    \filldraw[black] (4, 1) circle (2pt) node[anchor=north]{$D$};
    \draw[gray, thick] (0,0) -- (2.4, 0.6);
    \draw[gray, thick] (1.6, 0.6) -- (4, 1);
\end{tikzpicture}
 \qquad \qquad
\begin{tikzpicture}
\filldraw[black] (0,0) circle (2pt) node[anchor=south]{$A$};
    \filldraw[black] (1.6, 0.6) circle (2pt) node[anchor=south]{$B$};
    \filldraw[black] (2.4, 0.6) circle (2pt) node[anchor=north]{$C$};
    \filldraw[black] (4, 1) circle (2pt) node[anchor=north]{$D$};
    \draw[gray, thick] (0,0) -- (4, 1);
    \draw[gray, thick] (1.6, 0.6) -- (2.4, 0.6);
    \draw[gray, thick] (1.6, 0.6) -- (4, 1);
\end{tikzpicture}
\quad 1q
\\ \hline 
\caption{Representation of (1d), (1i), (1n), (1q) in the collinear case.}
\end{longtable}

We exclude cases (1k), (1o), (1s), which are coincident (see Table \ref{Fignall2first}), by direct comparison with the $\alpha$-mass of $Z_j$. For $j$ sufficiently large, the $\alpha$-mass corresponding to the above cases is
    $$\theta^\alpha((1+k^{-1})^\alpha\Haus^1(\sigma_{C_jD_j})+\Haus^1(\sigma_{A_jC_j})+k^{-\alpha}\Haus^1(\sigma_{B_jD_j}))>\MM(Z_j).$$

\begin{longtable}{  p{1cm} c } \label{Fignall2first}
   \\ &
\begin{tikzpicture}
     \filldraw[black] (0,0) circle (2pt) node[anchor=south]{$A$};
    \filldraw[black] (3.2, 1.2) circle (2pt) node[anchor=south]{$B$};
    \filldraw[black] (4.8, 1.2) circle (2pt) node[anchor=north]{$C$};
    \filldraw[black] (8, 2) circle (2pt) node[anchor=north]{$D$};
    \path[draw = gray,% 
             decoration={%
            markings,%
            mark=at position 0.5   with {\arrow[scale=1.5]{>}},%
            },%
            postaction=decorate] (0,0) -- (4.8, 1.2)
            node[midway, above, color= gray, font=\small]{$\theta$};
    \path[draw = gray,% 
             decoration={%
            markings,%
            mark=at position 0.5   with {\arrow[scale=1.5]{>}},%
            },%
            postaction=decorate] (4.8, 1.2) -- (8, 2)
            node[midway, below, color= gray, font=\small]{$\theta+\delta$};
    \path[draw = gray,% 
             decoration={%
            markings,%
            mark=at position 0.5   with {\arrow[scale=1.5]{>}},%
            },%
            postaction=decorate] (3.2, 1.2) -- (8, 2)
            node[midway, above, color= gray, font=\small]{$\delta$};
    \end{tikzpicture}
\\ \hline
\caption{Representation of (1k), (1o), (1s) in the collinear case.}
\end{longtable}

We do not exclude cases (1j), (1m) and (1r), since the current coincides with $Z_j$ (see Table \ref{Figcm2}).
\begin{longtable}{  p{1cm} c } \label{Figcm2}
   \\ &
\begin{tikzpicture}
     \filldraw[black] (0,0) circle (2pt) node[anchor=south]{$A$};
    \filldraw[black] (3.2, 1.2) circle (2pt) node[anchor=south]{$B$};
    \filldraw[black] (4.8, 1.2) circle (2pt) node[anchor=south]{$C$};
    \filldraw[black] (8, 2) circle (2pt) node[anchor=north]{$D$};
    \path[draw = gray,% 
             decoration={%
            markings,%
            mark=at position 0.5   with {\arrow[scale=1.5]{>}},%
            },%
            postaction=decorate] (0,0) -- (4.8, 1.2)
            node[midway, above, color= gray, font=\small]{$\theta$};
    \path[draw = gray,% 
             decoration={%
            markings,%
            mark=at position 0.5   with {\arrow[scale=1.5]{>}},%
            },%
            postaction=decorate] (4.8,1.2) -- (8, 2)
            node[midway, above, color= gray, font=\small]{$\theta$};
    \path[draw = gray,% 
             decoration={%
            markings,%
            mark=at position 0.5   with {\arrow[scale=1.5]{>}},%
            },%
            postaction=decorate] (4.8, 1.2) -- (3.2, 1.2)
            node[midway, above, color= gray, font=\small]{$\delta$};
    \end{tikzpicture}
\\ \hline
\caption{Representation of (1j), (1m), (1r) in the collinear case.} \endlastfoot
\end{longtable}

We exclude cases (1a), (1c), (1e), which are coincident (see Table \ref{Fignall2}), by direct comparison with the $\alpha$-mass of $W_j$. For $j$ sufficiently large and for $k\geq k_0(\alpha)$, the $\alpha$-mass corresponding to the above cases is
\begin{equation}\label{e:choicek4}
\begin{split}
&\theta^\alpha(\Haus^1(\sigma_{C_jD_j})+(1-k^{-1})^{\alpha}\Haus^1(\sigma_{A_jC_j})+k^{-\alpha}\Haus^1(\sigma_{A_jB_j}))\\
&> \theta^\alpha(\Haus^1(\sigma_{C_jD_j})+((1-k^{-1})^{\alpha}+k^{-\alpha})\Haus^1(\sigma_{A_jB_j}))\\
&> \theta^\alpha(\Haus^1(\sigma_{C_jD_j})+(1+\frac{1}{2}k^{-\alpha})\Haus^1(\sigma_{A_jB_j}))\\
&> \theta^\alpha(\Haus^1(\sigma_{C_jD_j})+\Haus^1(\sigma_{A_jB_j}) +k^{-\alpha}\Haus^1(\sigma_{B_jC_j}))=\MM(W_j).
\end{split}
\end{equation}

\begin{longtable}{  p{1cm} c } \label{Fignall2}
      \\ &
\begin{tikzpicture}
     \filldraw[black] (0,0) circle (2pt) node[anchor=south]{$A$};
    \filldraw[black] (3.2, 1.2) circle (2pt) node[anchor=south]{$B$};
    \filldraw[black] (4.8, 1.2) circle (2pt) node[anchor=south]{$C$};
    \filldraw[black] (8, 2) circle (2pt) node[anchor=north]{$D$};
    \path[draw = gray,% 
             decoration={%
            markings,%
            mark=at position 0.5   with {\arrow[scale=1.5]{>}},%
            },%
            postaction=decorate] (0,0) -- (4.8, 1.2)
            node[midway, below, color= gray, font=\small]{$\theta-\delta$};
    \path[draw = gray,% 
             decoration={%
            markings,%
            mark=at position 0.5   with {\arrow[scale=1.5]{>}},%
            },%
            postaction=decorate] (4.8,1.2) -- (8, 2)
            node[midway, above, color= gray, font=\small]{$\theta$};
    \path[draw = gray,% 
             decoration={%
            markings,%
            mark=at position 0.5   with {\arrow[scale=1.5]{>}},%
            },%
            postaction=decorate] (0, 0) -- (3.2, 1.2)
            node[midway, above, color= gray, font=\small]{$\delta$};
    \end{tikzpicture}
\\ \hline
\caption{Representation of (1a), (1c), (1e) in the collinear case.} \endlastfoot
\end{longtable}

Lastly, cases (1b), (1f), (1h), (1l) can be excluded with the same argument used in Sub-case 1-2, since the segments in the corresponding support are in general position also when $A_j, C_j$, and $D_j$ are collinear (see Table \ref{Fehs2}).

\begin{longtable}{  p{1cm} c } \label{Fehs2} 
\\ 1b &
\begin{tikzpicture}
     \filldraw[black] (0,0) circle (2pt) node[anchor=south]{$A$};
    \filldraw[black] (1.6, 0.6) circle (2pt) node[anchor=south]{$B$};
    \filldraw[black] (2.4, 0.6) circle (2pt) node[anchor=north]{$C$};
    \filldraw[black] (4, 1) circle (2pt) node[anchor=north]{$D$};
    \draw[gray, thick] (0,0) -- (1.6, 0.6);
    \draw[gray, thick] (4, 1) -- (1.6, 0.6);
    \draw[gray, thick] (0,0) -- (2.4, 0.6);
\end{tikzpicture}
 \qquad \qquad
\begin{tikzpicture}
\filldraw[black] (0,0) circle (2pt) node[anchor=south]{$A$};
    \filldraw[black] (1.6, 0.6) circle (2pt) node[anchor=south]{$B$};
    \filldraw[black] (2.4, 0.6) circle (2pt) node[anchor=north]{$C$};
    \filldraw[black] (4, 1) circle (2pt) node[anchor=north]{$D$};
    \draw[gray, thick] (2.4,0.6) -- (1.6, 0.6);
    \draw[gray, thick] (1.6, 0.6) -- (0, 0);
    \draw[gray, thick] (1.6, 0.6) -- (4, 1);
\end{tikzpicture}
\quad 1f
\\ \hline 
\\ 1h &
\begin{tikzpicture}
     \filldraw[black] (0,0) circle (2pt) node[anchor=south]{$A$};
    \filldraw[black] (1.6, 0.6) circle (2pt) node[anchor=south]{$B$};
    \filldraw[black] (2.4, 0.6) circle (2pt) node[anchor=north]{$C$};
    \filldraw[black] (4, 1) circle (2pt) node[anchor=north]{$D$};
    \draw[gray, thick] (0,0) -- (1.6, 0.6);
    \draw[gray, thick] (4, 1) -- (1.6, 0.6);
    \draw[gray, thick] (2.4,0.6) -- (4, 1);
\end{tikzpicture}
 \qquad \qquad
\begin{tikzpicture}
\filldraw[black] (0,0) circle (2pt) node[anchor=south]{$A$};
    \filldraw[black] (1.6, 0.6) circle (2pt) node[anchor=south]{$B$};
    \filldraw[black] (2.4, 0.6) circle (2pt) node[anchor=north]{$C$};
    \filldraw[black] (4, 1) circle (2pt) node[anchor=north]{$D$};
    \draw[gray, thick] (2.4,0.6) -- (0, 0);
    \draw[gray, thick] (1.6, 0.6) -- (2.4, 0.6);
    \draw[gray, thick] (1.6, 0.6) -- (4, 1);
\end{tikzpicture}
\quad 1l
\\ \hline
\caption{Representation of (1b), (1f), (1h), (1l) in the collinear case.}
\end{longtable}

\emph{Case 2: $\BR(S_{n_j}\trace Q') = \{E_j\}$.} Recalling that $E_j$ is the endpoint of at least three segments in the support of $S_{n_j}\trace Q'$, see the proof of Lemma \ref{l:numberBranch}, the only possibilities are that
$\supp(S_{n_j}\trace Q')$ is one of the following sets (see Table \ref{Fig16}): 
\begin{itemize}
    \item [(2a)] $\sigma_{A_jE_j}\cup\sigma_{B_jE_j}\cup\sigma_{C_jE_j}\cup\sigma$, with $\sigma\neq\sigma_{D_jE_j}$,
    \item [(2b)] $\sigma_{A_jE_j}\cup\sigma_{B_jE_j}\cup\sigma_{D_jE_j}\cup\sigma$, with $\sigma\neq\sigma_{C_jE_j}$,
    \item [(2c)] $\sigma_{A_jE_j}\cup\sigma_{C_jE_j}\cup\sigma_{D_jE_j}\cup\sigma$, with $\sigma\neq\sigma_{B_jE_j}$,
    \item [(2d)] $\sigma_{B_jE_j}\cup\sigma_{C_jE_j}\cup\sigma_{D_jE_j}\cup\sigma$, with $\sigma\neq\sigma_{A_jE_j}$,
    \item [(2e)] $\sigma_{A_jE_j}\cup\sigma_{B_jE_j}\cup\sigma_{C_jE_j}\cup\sigma_{D_jE_j}$.
\end{itemize}  

\afterpage{
 \begin{longtable}{p{2cm} c  }\label{Fig16}
     2a &
        \begin{tikzpicture}
         \filldraw[black] (0,0) circle (2pt) node[anchor=south]{$A$};
        \filldraw[black] (3.6,2) circle (2pt) node[anchor=south]{$B$};
        \filldraw[black] (5.4, 2) circle (2pt) node[anchor=south]{$C$};
        \filldraw[black] (8.8, 1.2) circle (2pt) node[anchor=north]{$D$};
        \filldraw[black] (2.4, 1.1) circle (2pt) node[anchor=north]{$E$};
        \draw[gray, thick] (0,0) -- (2.4, 1.1);
        \draw[gray, thick] (3.6, 2) -- (2.4, 1.1);
        \draw[gray, thick] (5.4, 2) -- (2.4, 1.1);
        \end{tikzpicture}\\ \hline
        2b &
        \begin{tikzpicture}
         \filldraw[black] (0,0) circle (2pt) node[anchor=south]{$A$};
        \filldraw[black] (3.6,2) circle (2pt) node[anchor=south]{$B$};
        \filldraw[black] (5.4, 2) circle (2pt) node[anchor=south]{$C$};
        \filldraw[black] (8.8, 1.2) circle (2pt) node[anchor=north]{$D$};
        \filldraw[black] (3.5, 0.9) circle (2pt) node[anchor=north]{$E$};
        \draw[gray, thick] (0,0) -- (3.5, 0.9);
        \draw[gray, thick] (3.6, 2) -- (3.5, 0.9);
        \draw[gray, thick] (8.8, 1.2) -- (3.5, 0.9);
        \end{tikzpicture}\\ \hline
        2c &
        \begin{tikzpicture}
         \filldraw[black] (0,0) circle (2pt) node[anchor=south]{$A$};
        \filldraw[black] (3.6,2) circle (2pt) node[anchor=south]{$B$};
        \filldraw[black] (5.4, 2) circle (2pt) node[anchor=south]{$C$};
        \filldraw[black] (8.8, 1.2) circle (2pt) node[anchor=north]{$D$};
        \filldraw[black] (5.6, 1.2) circle (2pt) node[anchor=north]{$E$};
        \draw[gray, thick] (0,0) -- (5.6, 1.2);
        \draw[gray, thick] (5.4, 2) -- (5.6, 1.2);
        \draw[gray, thick] (8.8, 1.2) -- (5.6, 1.2);
        \end{tikzpicture}\\ \hline
            2d &
        \begin{tikzpicture}
         \filldraw[black] (0,0) circle (2pt) node[anchor=south]{$A$};
        \filldraw[black] (3.6,2) circle (2pt) node[anchor=south]{$B$};
        \filldraw[black] (5.4, 2) circle (2pt) node[anchor=south]{$C$};
        \filldraw[black] (8.8, 1.2) circle (2pt) node[anchor=north]{$D$};
        \filldraw[black] (6.5, 1.6) circle (2pt) node[anchor=north]{$E$};
        \draw[gray, thick] (3.6,2) -- (6.5, 1.6);
        \draw[gray, thick] (5.4, 2) -- (6.5, 1.6);
        \draw[gray, thick] (8.8, 1.2) -- (6.5, 1.6);
        \end{tikzpicture}\\ \hline
            2e &
        \begin{tikzpicture}
         \filldraw[black] (0,0) circle (2pt) node[anchor=south]{$A$};
        \filldraw[black] (3.6,2) circle (2pt) node[anchor=south]{$B$};
        \filldraw[black] (5.4, 2) circle (2pt) node[anchor=south]{$C$};
        \filldraw[black] (8.8, 1.2) circle (2pt) node[anchor=north]{$D$};
        \filldraw[black] (3.7, 1.1) circle (2pt) node[anchor=north]{$E$};
        \draw[gray, thick] (0,0) -- (3.7, 1.1);
        \draw[gray, thick] (3.6,2) -- (3.7, 1.1);
        \draw[gray, thick] (5.4, 2) -- (3.7, 1.1);
        \draw[gray, thick] (8.8, 1.2) -- (3.7, 1.1);
        \end{tikzpicture}
\\
\hline
\caption{Representation of (2a), (2b), (2c), (2d), (2e). In (2a), (2b), (2c), (2d) we do not represent the segment $\sigma$.}
\end{longtable}}

We exclude case (2a), indeed by \cite[Lemma 12.1 and Lemma 12.2]{BCM}, $E_j\in{\rm{conv}}(\{A_j, B_j, C_j\})$, hence we have for $\rho$ small and $j$ sufficiently large
\begin{equation}\label{e:angolo1}
    \pi-O(\rho)=\angle{A_jB_jC_j}\leq\angle{A_jE_jC_j}\leq\pi \,.
\end{equation} 
This contradicts \cite[Lemma 12.2]{BCM} for $\rho \leq \rho(k)$, since the modulus of the multiplicity of $\sigma_{A_jE_j},\sigma_{E_jB_j}$ and $\sigma_{E_jC_j}$ belongs to $[k^{-1},\Mass(b_{n_j})]$, which by \eqref{e:massa_bienne} is contained in $[k^{-1},(1+hk^{-1})\Mass(b)]$.\\

Cases (2b), (2c) and (2d) are excluded with a similar argument as in case (2a), where the angle $\angle{A_jE_jC_j}$ in \eqref{e:angolo1} is replaced respectively by $\angle{A_jE_jD_j}$, $\angle{A_jE_jD_j}$ and $\angle{B_jE_jD_j}$.\\

We exclude case (2e) by direct comparison with the $\alpha$-mass of $Z_j$. The $\alpha$-mass corresponding to (2e) is
\begin{equation}\label{e:case2e}
\begin{split}
&\theta^\alpha(\Haus^1(\sigma_{A_jE_j})+\Haus^1(\sigma_{E_jD_j})+k^{-\alpha}(\Haus^1(\sigma_{B_jE_j})+\Haus^1(\sigma_{E_jC_j})))\\
&\geq \theta^\alpha(\Haus^1(\sigma_{A_jD_j})+k^{-\alpha}\Haus^1(\sigma_{B_jC_j}))=\MM(Z_j),
\end{split}
\end{equation}
where the inequality is strict unless $\{E_j\}=\sigma_{A_jD_j}\cap \sigma_{B_jC_j}$, namely unless the current is $Z_j$, which of course we do not need to exclude.\\

%Let $X_j := S_{n_j} \trace Q' + \theta k^{-1}(\llbracket \sigma_{B_j E_j} \rrbracket +\llbracket \sigma_{ E_jC_j} \rrbracket +\llbracket \sigma_{C_jB_j} \rrbracket )$. Clearly it is a competitor. However, we use the triangular inequality to see
%\begin{align*}
 %   \Mass^\alpha(X_j) = \Mass^\alpha (S_{n_j} \trace Q') - (\theta k^{-1})^\alpha (\Haus( \sigma_{B_j E_j}) + \Haus(\sigma_{ E_jC_j})- \Haus(\sigma_{C_jB_j}))
 %   \leq  \Mass^\alpha (S_{n_j} \trace Q') ,
%\end{align*}
%with equality if and only if $E_j \in \sigma_{B_jC_j}$. Moreover
%\begin{align*}
 %   \Mass^\alpha(X_j) &= (\theta k^{-1})^\alpha \Haus(\sigma_{B_jC_j}) + \theta^\alpha (\Haus( \sigma_{A_j E_j}) + \Haus(\sigma_{ E_jD_j}))\\
  %  &\leq (\theta k^{-1})^\alpha \Haus(\sigma_{B_jC_j}) + \theta^\alpha \Haus( \sigma_{A_jD_j})
  %  = \Mass^\alpha(Z_j)
%\end{align*}
%with equality if and only if $E_j \in \sigma_{A_jD_j}$. If $ E_j \in \sigma_{A_jD_j}\cap \sigma_{C_jB_j}$ this reduces to the case $Z_j$ \textcolor{red}{did we say that in $Z_j$ the edges cannot intersect?}, otherwise at least one of the previous competitors is strictly better.

\emph{Case 3: $\BR(S_{n_j}\trace Q') = \{E_j,F_j\}$.} Recalling \cite[Proposition 7.4]{BCM}, and the fact that both $E_j$ and $F_j$ are the endpoints of at least three segments in the support of $S_{n_j}\trace Q'$, see the proof of Lemma \ref{l:numberBranch}, up to switching between $E_j$ and $F_j$, the only possibilities are that $\supp(S_{n_j}\trace Q')$ is one of the following sets (see Table \ref{Fig17}):
\begin{itemize}
    \item [(3a)] $\sigma_{E_jF_j}\cup\sigma_{A_jE_j}\cup\sigma_{B_jE_j}\cup\sigma_{C_jF_j}\cup\sigma_{D_jF_j}$,
    \item [(3b)] $\sigma_{E_jF_j}\cup\sigma_{A_jE_j}\cup\sigma_{C_jE_j}\cup\sigma_{B_jF_j}\cup\sigma_{D_jF_j}$,
    \item [(3c)] $\sigma_{E_jF_j}\cup\sigma_{A_jE_j}\cup\sigma_{D_jE_j}\cup\sigma_{B_jF_j}\cup\sigma_{C_jF_j}$.
\end{itemize}

(3a) 
Denote by $\pi_0$ the affine 2-plane passing through $A_j$, $B_j$ and $F_j$ (and therefore containing $E_j$ as well). By \cite[Lemma 12.2]{BCM} the line $\ell$ containing $\sigma_{E_jF_j}$ divides $\pi_0\setminus \ell$ into two open half-planes $\pi_0^-$ and $\pi_0^+$ containing respectively $A_j$ and $B_j$. Let $C'_j$ and $D'_j$ denote the orthogonal projections onto $\pi_0$ of $C_j$ and $D_j$ respectively and observe that $C'_j\in \pi_0^+$. This follows from the fact that by \cite[Lemma 12.2]{BCM} there exists a positive constant $\kappa$ (depending on $k$) such that $\angle A_jE_jF_j\leq \pi-\kappa$ and assuming $C'_j\notin \pi_0^+$ would lead to 
$$\angle A_jB_jC'_j \leq\angle A_jE_jF_j\leq \pi-\kappa,$$
which is a contradiction, for $\rho$ sufficiently small with respect to $k$, since, due to the fact that $\angle A_jB_jC_j\geq \frac\pi 2$, 
$$\angle A_jB_jC'_j\geq\angle A_jB_jC_j\geq \pi-O(\rho).$$
On the other hand, the fact that $C'_j\in\pi_0^+$ implies that $D'_j\in\pi_0^-$, hence
$$\angle A_jE_jD'_j\leq\angle A_jE_jF_j\leq \pi-\kappa$$
which is a contradiction, for $\rho$ sufficiently small with respect to $k$, since, as above, 
$$\angle A_jE_jD'_j\geq\angle A_jE_jD_j\geq \pi-O(\rho).$$

\afterpage{
\begin{longtable}{p{2cm} c  }\label{Fig17}
     3a &
\begin{tikzpicture}
         \filldraw[black] (0,0) circle (2pt) node[anchor=south]{$A$};
        \filldraw[black] (4,1.8) circle (2pt) node[anchor=south]{$B$};
        \filldraw[black] (6, 1.8) circle (2pt) node[anchor=south]{$C$};
        \filldraw[black] (10, 0) circle (2pt) node[anchor=north]{$D$};
        \filldraw[black] (3.5, 1) circle (2pt) node[anchor=north]{$E$};
        \filldraw[black] (6.5, 1.1) circle (2pt) node[anchor=north]{$F$};
        \path[draw = gray,% 
             decoration={%
            markings,%
            mark=at position 0.5   with {\arrow[scale=1.5]{>}},%
            },%
            postaction=decorate] (0,0) -- (3.5, 1)
            node[midway, above, color= gray, font=\small]{$\theta$};
        \path[draw = gray,% 
             decoration={%
            markings,%
            mark=at position 0.5   with {\arrow[scale=1.5]{>}},%
            },%
            postaction=decorate] (6.5, 1.1)  -- (10, 0)
            node[midway, above, color= gray, font=\small]{$\theta$};
        \path[draw = gray,% 
             decoration={%
            markings,%
            mark=at position 0.5   with {\arrow[scale=1.5]{<}},%
            },%
            postaction=decorate] (6.5, 1.1)  -- (6, 1.8)
            node[midway, above right, color= gray, font=\small]{$\delta$};
        \path[draw = gray,% 
             decoration={%
            markings,%
            mark=at position 0.5   with {\arrow[scale=1.5]{>}},%
            },%
            postaction=decorate] (3.5, 1) -- (4, 1.8)
            node[midway, above left, color= gray, font=\small]{$\delta$};
        \path[draw = gray,% 
             decoration={%
            markings,%
            mark=at position 0.5   with {\arrow[scale=1.5]{>}},%
            },%
            postaction=decorate] (3.5, 1) -- (6.5, 1.1)
            node[midway, above, color= gray, font=\small]{$\theta-\delta$};
        \end{tikzpicture}\\ \hline
        3b &
\begin{tikzpicture}
         \filldraw[black] (0,0) circle (2pt) node[anchor=south]{$A$};
        \filldraw[black] (5.5, 2.6) circle (2pt) node[anchor= south east]{$B$};
        \filldraw[black] (4.5, 0.6) circle (2pt) node[anchor=north west]{$C$};
        \filldraw[black] (10, 3.0) circle (2pt) node[anchor=north]{$D$};
        \filldraw[black] (5.7, 2.0) circle (2pt) node[anchor=north west]{$F$};
        \filldraw[black] (4.3, 1.2) circle (2pt) node[anchor=south east]{$E$};
         \path[draw = gray,% 
             decoration={%
            markings,%
            mark=at position 0.5   with {\arrow[scale=1.5]{>}},%
            },%
            postaction=decorate] (0,0) -- (4.3, 1.2)
            node[midway, above , color= gray, font=\small]{$\theta$};
         \path[draw = gray,% 
             decoration={%
            markings,%
            mark=at position 0.5   with {\arrow[scale=1.5]{>}},%
            },%
            postaction=decorate] (5.7, 2.0)  -- (10, 3.0)
            node[midway, below, color= gray, font=\small]{$\theta$};
         \path[draw = gray,% 
             decoration={%
            markings,%
            mark=at position 0.6   with {\arrow[scale=1.5]{>}},%
            },%
            postaction=decorate] (5.7, 2.0)  -- (5.5, 2.6)
            node[midway, right , color= gray, font=\small]{$\delta$};
         \path[draw = gray,% 
             decoration={%
            markings,%
            mark=at position 0.5   with {\arrow[scale=1.5]{<}},%
            },%
            postaction=decorate] (5.7, 2.0) -- (4.3, 1.2)
            node[midway, below right, color= gray, font=\small]{$\theta+\delta$};
         \path[draw = gray,% 
             decoration={%
            markings,%
            mark=at position 0.5   with {\arrow[scale=1.5]{<}},%
            },%
            postaction=decorate] (4.3, 1.2) -- (4.5, 0.6)
            node[midway, left, color= gray, font=\small]{$\delta$};
        \end{tikzpicture} \\ \hline
        3c &
\begin{tikzpicture}
         \filldraw[black] (0,0) circle (2pt) node[anchor=south]{$A$};
        \filldraw[black] (4,1.8) circle (2pt) node[anchor=south]{$B$};
        \filldraw[black] (6, 1.8) circle (2pt) node[anchor=south]{$C$};
        \filldraw[black] (10, 0) circle (2pt) node[anchor=north]{$D$};
        \filldraw[black] (4.8, 0.6) circle (2pt) node[anchor=north]{$E$};
        \filldraw[black] (5.2, 1.3) circle (2pt) node[anchor=south]{$F$};
        \draw[gray, thick] (0,0) -- (4.8, 0.6);
        \draw[gray, thick] (4.8, 0.6)  -- (10, 0);
        \draw[gray, thick] (4.8, 0.6)  -- (5.2, 1.3);
        \draw[gray, thick] (5.2, 1.3) -- (4,1.8);
        \draw[gray, thick] (5.2, 1.3) -- (6, 1.8);
        \end{tikzpicture}\\
\hline
\caption{Representation of (3a), (3b), (3c).} 
\end{longtable}}

(3b) By \cite[Lemma 12.2]{BCM} applied at the branch point $E_j$ we deduce that the angle between the oriented segments $\sigma_{A_j E_j}$ and $\sigma_{E_j F_j}$ tends to 0 as $k\to\infty$. By the same argument applied at the branch point $F_j$ we deduce the same property for the angle between the oriented segments $\sigma_{E_j F_j}$ and $\sigma_{F_j D_j}$. As a consequence, the angle between the oriented segments $\sigma_{A_j D_j}$ and $\sigma_{E_j F_j}$ tends to 0 as $k\to\infty$. Again by \cite[Lemma 12.2]{BCM}, the angles $\angle C_jE_jF_j$ and $\angle E_jF_jB_j$ are equal to $\frac{\pi}{2}+C(k)$ where $C(k)$ tends to 0 as $k\to\infty$. 

Next, using that the angle $\angle E_jF_jD_j$ differs from $\pi$ by a positive constant which depends only on $k$, we observe that the plane containing $A_j,C_j,F_j$ (and therefore also $E_j$) is obtained from the plane containing $D_j,B_j,E_j$ (and therefore also $F_j$) by a rotation $O$ around the line containing $\sigma_{E_jF_j}$ such that, for any fixed $k$,  $\|O-Id\|<f(\rho)$, where $f(\rho)$ tends to 0 as $\rho\to 0$. This implies that the angle between the oriented segments $\sigma_{B_jC_j}$ and $\sigma_{A_jE_j}$ is larger than $\frac{\pi}{2}-c(k)$, where $c(k)$ tends to 0 as $k\to\infty$. This is a contradiction for $\rho$ sufficiently small and $k$ sufficiently large, since for any $k$ the angle between the oriented segments $\sigma_{B_jC_j}$ and $\sigma_{A_jD_j}$ tends to 0 as $\rho\to 0$ and for any $\rho$ the angle between the oriented segment $\sigma_{A_jE_j}$ and the oriented segment $\sigma_{A_jD_j}$ tends to 0 (independently of $\rho$) as $k\to \infty$.

(3c) We exclude this case as the corresponding set is not the support of any
current with boundary $ \partial (S_{n_j} \trace Q') $, because both the segments $\sigma_{A_j E_j} $ and $\sigma_{E_j D_j}$ should have multiplicity $\theta$, thus the multiplicity of $\sigma_{E_j F_j}$ would be zero.

\subsection{Conclusion.}\label{Step3} 
In order to conclude the proof of Lemma \ref{l:main}, for $j$ and $k$ sufficiently large we now exclude the case in which $S_{n_j}$ coincides with $Z_j$ close to at least one point $p_i$. Indeed, should that happen, we could build a better competitor than $T$ for $b$ by adding $\frac1k \sum_{i=1}^h T\trace B_{{n_j}^{-1}}(p_i)$ to $S_{n_j}$. 
%The reason for that is that if transporting a fixed amount from $C_j$ to $B_j$ without being near to the rest of $\supp(S_j)$ is optimal, then in the limit problem, i.e. when $B_j=C_j$ there should be a drop of energy. However, the sequence $T_{n_j}$ (which has the shape $W_j$) converges in energy to $S$ making the drop impossible. We explain this now in more detail.

We claim that, for $j$ sufficiently large and for $k\geq k_0(\alpha)$, we have
        \begin{equation}\label{e:finalclaim}
            \MM \left(S_{n_j}+\frac1k \sum_{i=1}^h T\trace B_{{n_j}^{-1}}(p_i)\right)\leq \MM(T),
        \end{equation}
the inequality being strict unless $S$ passes through all the $p_i$'s and around every $p_i$ the current $S_{n_j}$ has the shape $W_j$, i.e. (remember that $Q'$ depends on $i$)
\begin{equation}\label{e:finalinclusion}
\{i:p_i\notin\supp(S)\}=\emptyset=\{i:S_{n_j}\trace Q'=Z_j\}.    
\end{equation}
The validity of the claim would conclude the proof, as the following argument shows. The inequality \eqref{e:finalclaim} cannot be strict, since $\partial (S_{n_j}+\frac1k \sum_{i=1}^h T\trace B_{{n_j}^{-1}}(p_i))=b$ by \eqref{e:bienne} and $T\in\OTP(b)$. We deduce that \eqref{e:finalinclusion} holds, which by Lemma \ref{l:magic_points} implies that $S=T$ and $S_{n_j}+\frac1k \sum_{i=1}^h T\trace B_{{n_j}^{-1}}(p_i)=T$ and therefore, recalling \eqref{e:bienne}, we conclude that $S_{n_j}=T_{n_j}$ for $j$ sufficiently large.
\pagebreak

In order to prove the claim, let us consider firstly the case $S\neq T$. Let $p\in\{p_1,\dots,p_h\}\setminus\supp(S)$ and take $r>0$ such that 
\begin{equation}\label{e:p_isolated}
p\notin B_{3r} \big(\supp(S)\cup (\{p_1, \dots, p_h\}\setminus \{p\}) \big).    
\end{equation}
Applying Lemma \ref{l:conv_hauss}, for $j$ sufficiently large we have
\begin{equation}\label{e:sn_dentro}
    \begin{split}
        \supp(S_{n_j}) &\subset B_r(\supp(S)\cup\{p_1,\dots,p_h\})\\
        &=B_r(\supp(S)\cup\{p_1,\dots,p_h\}\setminus \{p\})\cup B_r(p)\,.
    \end{split}
\end{equation}
By \eqref{e:p_isolated} we have that $B_r(\supp(S)\cup\{p_1,\dots,p_h\}\setminus \{p\})$ and $B_{2r}(p)$ are disjoint. Define $\tilde S_{n_j} := S_{n_j} \trace B_{2r}(\{p\})$. 
By \cite[Lemma 28.5]{SimonLN} with $f(x)=\dist(x,\{p\})$ and \eqref{e:sn_dentro},
we have that for $j$ sufficiently large,
\begin{equation}
\partial \tilde S_{n_j} = b_{n_j} \trace B_{2r}(\{p\}) = -\frac{1}{k}\partial(T\trace B_{{n_j}^{-1}}(p)),
\end{equation}
which is supported in exactly two points.
Since necessarily $\tilde S_{n_j}\in\OTP(\partial \tilde S_{n_j})$, we deduce that 
\begin{equation}\label{e:snoutsidesupport}
\tilde S_{n_j} = -\frac{1}{k} T\trace B_{{n_j}^{-1}}(p),
\end{equation}
for $j$ sufficiently large.

Combining \eqref{e:snoutsidesupport}, \eqref{e:THE_ONE} and \eqref{e:alphamass_tienne}, we obtain that, for $j$ sufficiently large and for $k\geq k_0(\alpha)$
\begin{equation}\label{e:chofame}
\begin{split}
        \MM &\left(S_{n_j}+\frac1k \sum_{i=1}^h T\trace B_{{n_j}^{-1}}(p_i)\right)\\
        &=\MM(S_{n_j})+\sum_{i:S_{n_j}\trace Q'=W_j}(1-(1-k^{-1})^\alpha)\MM(T\trace B_{{n_j}^{-1}}(p_i))\\
        &\quad -\sum_{i:S_{n_j}\trace Q'=Z_j}k^{-\alpha}\MM(T\trace B_{{n_j}^{-1}}(p_i))-\sum_{i:p_i\notin\supp(S)}k^{-\alpha}\MM(T\trace B_{{n_j}^{-1}}(p_i))\\
        &\leq \MM(T_{n_j})+\sum_{i:S_{n_j}\trace Q'=W_j}(1-(1-k^{-1})^\alpha)\MM(T\trace B_{{n_j}^{-1}}(p_i))\\
        &\leq \MM(T_{n_j})+\sum_{i=1}^h (1-(1-k^{-1})^\alpha)\MM(T\trace B_{{n_j}^{-1}}(p_i))=\MM(T).
        \end{split}
\end{equation}
Observe that equality holds if and only if the negative terms above vanish which yields the validity of the claim in the case $S\neq T$. Moreover \eqref{e:chofame} trivially holds also in the case $S=T$, which concludes the validity of the claim in the general case, and of Lemma \ref{l:main}.
\end{proof}

\begin{proof}[Proof of Proposition \ref{p:unique_bienne}]
Since the conclusion of Lemma \ref{l:main} holds for every converging subsequence $S_{n_j}$, we deduce that $S_n=T_n$ and therefore $\OTP(b_n)=\{T_n\}$, for $n$ sufficiently large.
\end{proof}

\section{Proof of Theorem \ref{t:main}.}
By Lemma \ref{l:residual} it suffices to prove that the set $A_C\setminus NU_C$ is $\Flat_K$-dense in $A_C$. Fix $b\in A_C$ and $\varepsilon>0$. Let $\delta>0$ and $b''\in A_{C-\delta}$ be obtained by Lemma \ref{l:integral_bdry}. In particular let $b_I\in\I_0(K)$ be such that $b''=\eta b_I$ for some $\eta>0$.

Fix $T\in\OTP(b_I)$ and let $p_1,\dots,p_h$ be obtained applying Lemma \ref{l:magic_points} to the current $T$. Observe that $h$ depends on $T$. Let $k\in\N \setminus \{0\}$ be such that $k\geq k_0(\alpha)$ given by Proposition \ref{p:unique_bienne} and moreover $hk^{-1}C\leq\eta^{-1}\delta$.
For $n=1,2,\dots$, let $b_n$ be obtained as in \eqref{e:bienne}, where $b$ is replaced with $b_I$. By \eqref{e:massa_bienne}, for every $n$ we have 
$$\Mass(\eta b_n) = \eta\Mass(b_n)\leq\eta(\Mass(b_I)+hk^{-1}C)\leq\eta(\eta^{-1}(C-\delta)+\eta^{-1}\delta)=C.$$
Moreover, letting $S_n\in\OTP(b_n)$, by \eqref{e:alphamass_tienne} we have $\MM(\eta S_n)\leq\MM(\eta T)\leq C-\delta$, which allows to conclude that $\eta b_n\in A_C$ for every $n\in\N \setminus \{0\}$. By Proposition \ref{p:unique_bienne} we deduce that $\eta b_n\in A_C\setminus NU_C$ for $n$ sufficiently large, and by \eqref{e:convergence_bienne}, we have $$\Flat_K(\eta b_n-b)\leq\Flat_K(\eta b_n-\eta b_I)+\Flat_K(b''- b) <2\varepsilon,$$ 
for $n$ sufficiently large. By the arbitrariness of $\varepsilon$ we conclude the proof of the density of $A_C\setminus NU_C$ and hence the proof of the Theorem.

\appendix
\section{Improved stability}
The aim of this section is to improve the main result of \cite{CDRMcpam} by proving the following result.
\begin{theorem}\label{t:stability_new}
Let $b_n\in A_C$, see \eqref{def_AC}, and let $S_n \in \OTP(b_n)$. For every subsequential limit $T$ of $S_n$ we have $T\in\OTP(\partial T)$.
\end{theorem}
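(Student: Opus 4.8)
The plan is to decouple the statement into a soft lower-semicontinuity estimate and a (harder) energy upper bound, and then to sandwich the two.

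\emph{Reduction.} First I would extract a convenient subsequence. Since $b_n\in A_C$, arguing exactly as in the proof of Lemma \ref{l:closed} we get $\mathbb{M}(S_n)\le C^{1-\alpha}\mathbb{M}^\alpha(S_n)\le C^{2-\alpha}$ and $\mathbb{M}(\partial S_n)=\mathbb{M}(b_n)\le C$, so by the compactness theorem for normal currents there is a (non-relabeled) subsequence with $\mathbb{F}_K(S_{n_j}-T)\to 0$; set $b:=\partial T$. By continuity of the boundary operator $\mathbb{F}_K(b_{n_j}-b)\to 0$, and by lower semicontinuity of the $\alpha$-mass along flat-convergent normal currents (see \cite{CDRMS}),
\[
\mathbb{M}^\alpha(T)\le\liminf_{j\to\infty}\mathbb{M}^\alpha(S_{n_j})=\liminf_{j\to\infty}\mathbb{E}^\alpha(b_{n_j})\le C<\infty .
\]
In particular $b\in\mathscr{B}_0(K)$ with $\mathbb{E}^\alpha(b)<\infty$, and since $T\in\TP(b)$ it remains only to show $\mathbb{M}^\alpha(T)\le\mathbb{E}^\alpha(b)$; as $\mathbb{M}^\alpha(T)\le\liminf_j\mathbb{E}^\alpha(b_{n_j})$, it suffices to prove the reverse energy inequality $\limsup_j\mathbb{E}^\alpha(b_{n_j})\le\mathbb{E}^\alpha(b)$.

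\emph{Energy upper bound (the core).} Fix $\varepsilon>0$ and, since $\mathbb{E}^\alpha(b)<\infty$ (so optimal transport paths exist, by the same compactness-plus-lower-semicontinuity argument used for Lemma \ref{l:closed}), pick $S\in\OTP(b)$. The goal is to build, for $j$ large, a competitor $R_j\in\TP(b_{n_j})$ with $\mathbb{M}^\alpha(R_j)\le\mathbb{M}^\alpha(S)+\varepsilon=\mathbb{E}^\alpha(b)+\varepsilon$; then $\mathbb{E}^\alpha(b_{n_j})\le\mathbb{M}^\alpha(R_j)\le\mathbb{E}^\alpha(b)+\varepsilon$, and $\varepsilon\downarrow 0$ gives the claim. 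Such a competitor is produced exactly by the construction underlying the stability theorem of \cite{CDRMcpam}: using $\mathbb{F}_K(b_{n_j}-b)\to 0$ one finds $1$-currents $W_j$ with $\mathbb{M}(W_j)\to 0$ and $0$-currents $\rho_j:=b_{n_j}-b-\partial W_j$ with $\mathbb{M}(\rho_j)\to 0$, so that $S+W_j$ has boundary $b_{n_j}-\rho_j$ and one only needs to correct by a ``cheap'' current $V_j$ with $\partial V_j=\rho_j$. Here lies the non-soft point: the concavity of $t\mapsto t^\alpha$ forbids deducing $\mathbb{M}^\alpha(V_j)\to 0$ from $\mathbb{M}(\rho_j)\to 0$ alone, so one must exploit the good-decomposition structure of $S$, split the correction into a controlled number of pieces and estimate each by a grid/surgery argument, precisely as in \cite{CDRMcpam}. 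The only modification needed is that here all the bookkeeping is supplied by the single hypothesis $b_{n_j}\in A_C$ --- which yields at once $\mathbb{M}(b_{n_j})\le C$ and $\mathbb{E}^\alpha(b_{n_j})=\mathbb{M}^\alpha(S_{n_j})\le C$ --- and that the a priori finiteness of $\mathbb{E}^\alpha(b)$, which \cite{CDRMcpam} takes as an input, is not assumed but is the byproduct $\mathbb{M}^\alpha(T)\le C$ of the reduction step.

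\emph{Conclusion and main obstacle.} Combining the two steps, $\mathbb{M}^\alpha(T)\le\liminf_j\mathbb{E}^\alpha(b_{n_j})\le\limsup_j\mathbb{E}^\alpha(b_{n_j})\le\mathbb{E}^\alpha(b)\le\mathbb{M}^\alpha(T)$, so every inequality is an equality, $T\in\OTP(b)=\OTP(\partial T)$, and as a byproduct $\mathbb{E}^\alpha(b_{n_j})\to\mathbb{E}^\alpha(b)$. The step I expect to be the main obstacle is the energy upper bound: it cannot be obtained from flat-norm smallness by a soft argument and rests on the structural analysis of minimizers carried out in \cite{CDRMcpam}; the (slight) novelty is to run that construction under the weaker bookkeeping afforded by $A_C$ and to observe that the target energy need not be assumed finite a priori.
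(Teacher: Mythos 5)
Your sandwich scheme (lower semicontinuity of $\MM$ plus the energy upper bound $\limsup_j\E(b_{n_j})\le\E(\partial T)$, built by correcting an optimal $S\in\OTP(\partial T)$ into a competitor for $b_{n_j}$) is logically sound and is essentially a rearrangement of the argument of \cite{CDRMcpam}, which is also what the paper leans on. The genuine gap is that you invoke the construction of \cite{CDRMcpam} ``exactly'' without checking its hypotheses, and the hypothesis that fails here is precisely the one the paper's proof is about: \cite[Theorem 1.1]{CDRMcpam} (and the competitor construction behind it) assumes that the limit marginals $\mu^+$ and $\mu^-$ are mutually singular. In the present setting one only knows $b_{n_j}=\mu^+_{n_j}-\mu^-_{n_j}$ with $\Flat_K(b_{n_j}-\partial T)\to 0$, and the weak-$*$ limits $\mu^\pm$ of $\mu^\pm_{n_j}$ may well overlap (mass can cancel in the flat limit), so $\partial T=\mu^+-\mu^-$ with $\mu^\pm$ not mutually singular. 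The surgery/grid estimates you defer to \cite{CDRMcpam} use this separation at specific points (the paper lists \cite[equation (4.9)]{CDRMcpam} and \cite[equations (4.16), (4.17)]{CDRMcpam}, which can fail in cubes meeting both marginals), so the construction cannot be quoted as a black box; one must re-examine those steps and show they survive without mutual singularity, which is exactly what the paper's appendix does.

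By contrast, the two issues you single out as the ``slight novelty'' are not the real obstacles: the a priori finiteness of $\E(\partial T)$ follows, as you note, from $\MM(T)\le\liminf_j\MM(S_{n_j})\le C$, and the mass/energy bookkeeping is indeed supplied by $b_{n_j}\in A_C$ as in Lemma \ref{l:closed}; neither point requires new work. So the proposal is not wrong in outline, but as written it silently assumes the applicability of a theorem whose stated hypotheses need not hold along your sequence, and closing that hole is the entire content of the paper's proof of Theorem \ref{t:stability_new}.
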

\begin{proof}
The subsequential convergence $\Flat(S_n-T)\to 0$ implies $\Flat(b_n-\partial T)\to 0$ and writing $b_n=\mu^+_n-\mu^-_n$ (being $\mu^+_n$ and $\mu^-_n$ respectively the positive and the negative part of the signed measure $b_n$) and $\mu^\pm:=\lim_{n\to\infty}\mu^\pm_n$, we have $\partial T=\mu^+-\mu^-$, where $\mu^+$ and $\mu^-$ are not necessarily mutually singular.

Hence, with respect to \cite[Theorem 1.1]{CDRMcpam} we simply need to remove the assumption that $\mu^-$ and $\mu^+$ are mutually singular. In fact we observe that such assumption does not have a fundamental role in the proof already given in \cite{CDRMcpam} and, more precisely, we analyze all the points where such assumption is relevant.
\begin{itemize}
    \item In \cite[equation (4.9)]{CDRMcpam} the assumption is used, but we observe that if we do not assume that $\mu^-$ and $\mu^+$ are mutually singular, \cite[equation (4.9)]{CDRMcpam} would be replaced by 
    $$\partial T^{ij}=\int_{\Lip(Q^i,Q^j)}\delta_{\gamma(\infty)}-\delta_{\gamma(0)}dP(\gamma),$$
    which suffices to obtain \cite[equation (4.23)]{CDRMcpam}, which is the only point where \cite[equation (4.9)]{CDRMcpam} is (implicitly) used.   
    \item In \cite[pag. 852, line 7]{CDRMcpam}, the fact that $\mu^-$ and $\mu^+$ are mutually singular is actually not necessary.
    \item The fact that $\mu^-$ and $\mu^+$ are mutually singular is necessary to obtain \cite[equations (4.16), (4.17)]{CDRMcpam} and more precisely without such assumption the validity of those equations might fail in the cubes $\{Q^h:h=1,\dots,N\}$ but it remains true (with the same argument) in the remaining cubes $Q^i\in\Lambda(Q,k)$. However, we observe that \cite[equations (4.16), (4.17)]{CDRMcpam} are only used to obtain \cite[equation (4.18)]{CDRMcpam}, which remains valid, precisely because it is stated only for the cubes $Q^i\in\Lambda(Q,k)\setminus \{Q^h:h=1,\dots,N\}$.
\end{itemize}
In conclusion, with the minor modifications listed above, the proof of \cite[Theorem 1.1]{CDRMcpam} remains valid even without the assumption that $\mu^-$ and $\mu^+$ are mutually singular, thus concluding our proof. 
\end{proof}

\newpage

\subsection*{Acknowledgments}
A.M. acknowledges partial support from PRIN 2017TEXA3H\_002 "Gradient flows, Optimal Transport and Metric Measure Structures". 
\subsection*{Data availability statement}
Data sharing not applicable to this article as no dataset were generated or analysed during the current study.
%%%%%%%%%%%%%%%%%%%%%%%%%%%%%%%%%%%%%%%%%%%%%%%%%%%%%%%%%%%%%%%%%
%
%	BIBLIOGRAPHY
%
%%%%%%%%%%%%%%%%%%%%%%%%%%%%%%%%%%%%%%%%%%%%%%%%%%%%%%%%%%%%%%%%

%{\color{red} DA FARE, SELEZIONARE BIBLIOGRAFIA. VORREI AGGIUNGERE CIT A Hardt-Riviere di Acta e a Goldman-Otto-Serfaty nella parte molto generale. Io invece vorrei aggiungere la terminologia "does not give mass to small sets".
%}
\nocite{}

%
%
%
%
%%%%%%%%%%%%%%%%%%%%%%%%%%%%%%%%%%%%%%%%%%%%%%%%%%%%%%%%%%%%%%%%%%
%%
%%	AFFILIATIONS
%%
%%%%%%%%%%%%%%%%%%%%%%%%%%%%%%%%%%%%%%%%%%%%%%%%%%%%%%%%%%%%%%%%%%
%
\vskip 1 cm

{\parindent = 0 pt\begin{footnotesize}

Gianmarco Caldini
\\
Dipartimento di Matematica, Universit\`a degli Studi di Trento.\\
e-mail: {\tt gianmarco.caldini@unitn.it}
\\
\\
Andrea Marchese
\\
Dipartimento di Matematica, Universit\`a degli Studi di Trento.\\
e-mail: {\tt andrea.marchese@unitn.it}
\\
\\
Simone Steinbr\"uchel
\\
Mathematisches Institut, Universit\"at Leipzig.\\
e-mail: {\tt simone.steinbruechel@math.uni-leipzig.de}
\end{footnotesize}
}

\end{document}